\numberwithin{equation}{section}
\theoremstyle{plain}
\newtheorem{thm}{Theorem}[section]
\newtheorem{lem}[thm]{Lemma}
\newtheorem{prop}[thm]{Proposition}
\newtheorem{hyp}{Hypothesis}
\newtheorem{subhyp}{Hypothesis}
\theoremstyle{definition}
\newtheorem{defn}[thm]{Definition}
\theoremstyle{remark}
\newtheorem{rem}[thm]{Remark}
\newcommand{\be}{\begin{equation}}
\newcommand{\ee}{\end{equation}}
\newcommand{\bes}{\begin{equation*}}
\newcommand{\ees}{\end{equation*}}
\newcommand{\bfig}{\begin{figure}}
\newcommand{\efig}{\end{figure}}
\newcommand{\bt}{\begin{table}}
\newcommand{\et}{\end{table}}
\newcommand{\bc}{\begin{center}}
\newcommand{\ec}{\end{center}}
\newcommand{\ba}{\begin{array}}
\newcommand{\ea}{\end{array}}
\newcommand{\abs}[1]{\left\vert#1\right\vert}
\newcommand{\norm}[1]{\left\Vert#1\right\Vert}
\newcommand{\mt}[1]{\mathrm{#1}}
\newcommand\numberthis{\addtocounter{equation}{1}\tag{\theequation}}
\newcommand{\floor}[1]{\left\lfloor#1\right\rfloor}
\newcommand{\ceil}[1]{\left\lceil#1\right\rceil}
\def\R{\mathbb{R}}
\def\N{\mathbb{N}}
\def\P{\mathcal{P}}
\def\M{\mathcal{M}}
\def\C{\mathcal{C}}
\def\e{\varepsilon}
\def\d{\,\mathrm{d}}
\def\ird{\int_{\R^d}}
\def\st{\, \left|\right. \,}
\def\oneN{\{1,\dots,N\}}
\def\:{\colon}
\def\E{E_N}
\def\bX{\boldsymbol{X}}
\def\bXN{\boldsymbol{X_N}}
\def\bY{\boldsymbol{Y}}
\def\bYN{\boldsymbol{Y_N}}
\def\bXNs{\boldsymbol{X_N^*}}
\def\Np{N_{\mathrm{p}}}
\def\Ne{N_{\mathrm{e}}}
\def\mup{\mu_N^{\mathrm{p}}}
\def\mue{\mu_N^{\mathrm{e}}}
\def\Xint#1{\mathchoice
{\XXint\displaystyle\textstyle{#1}}%
{\XXint\textstyle\scriptstyle{#1}}%
{\XXint\scriptstyle\scriptscriptstyle{#1}}%
{\XXint\scriptscriptstyle\scriptscriptstyle{#1}}%
\!\int}
\def\XXint#1#2#3{{\setbox0=\hbox{$#1{#2#3}{\int}$ }
\vcenter{\hbox{$#2#3$ }}\kern-.6\wd0}}
\def\dashint{\Xint-}
\DeclareMathOperator{\dist}{dist}
\DeclareMathOperator{\supp}{supp}
\DeclareMathOperator{\Lip}{Lip}
\DeclareMathOperator{\diam}{diam}
\begin{document}

\title[Discrete minimisers of the interaction energy]{Discrete
  minimisers are close to continuum minimisers for the interaction
  energy}

\author{J. A. Ca\~nizo}
\address{Departamento de Matemática Aplicada, Facultad de Ciencias. Avenida de Fuente-nueva S/N, 18071 Granada, Spain}
\email{canizo@ugr.es}

\author{F. S. Patacchini}
\address{Department of Mathematical Sciences, Carnegie Mellon University, Pittsburgh, PA 15203, USA}
\email{f.patacch@math.cmu.edu}

\date{December 2017}
\subjclass[2010]{35A15, 35Q70, 49M25, 82B21.}

\maketitle

\begin{abstract}
  Under suitable technical conditions we show that minimisers of the
  discrete interaction energy for attractive-repulsive potentials
  converge to minimisers of the corresponding continuum energy as the
  number of particles goes to infinity. We prove that the discrete
  interaction energy $\Gamma$-converges in the narrow topology to the
  continuum interaction energy. As an important part of the proof we
  study support and regularity properties of discrete minimisers: we
  show that continuum minimisers belong to suitable Morrey spaces and
  we introduce the set of empirical Morrey measures as a natural
  discrete analogue containing all the discrete minimisers.
\end{abstract}

\tableofcontents

\section{Introduction}
\label{sec:introduction}

Consider a finite set of $N \geq 2$ classical particles in Euclidean
space $\R^d$ interacting through a pair potential
$W \: \R^d \to \R \cup \{+\infty\}$. If the particles are placed at
points $x_1, \dots, x_N \in \R^d$ and have equal masses $1/N$, then
their total interaction (potential) energy is given by
\begin{equation}
  \label{eq:discrete-energy}
  E_N(\bX) :=
  \frac{1}{2 N^2} \sum_{i=1}^N \sum_{\substack{j=1\\j\neq i}}^N W(x_i-x_j),
\end{equation}
where we denote $\bX \equiv (x_1, \dots, x_N)$. We often refer to a
set of positions $(x_1, \dots, x_N) \in \R^{Nd}$ as a
\emph{configuration}, and call $E_N$ the \emph{discrete interaction
  energy}. The gradient of $W$ models the interaction force between
two particles: $-m_x m_y \nabla W(x-y)$ is the force that a particle
at $x$ with mass $m_x$ exerts on a particle at $y$ with mass $m_y$,
and accordingly we say that $W$ is \emph{attractive} at $x \in \R^d$
when $-\nabla W(x) \cdot x \leq 0$ and \emph{repulsive} when
$-\nabla W(x) \cdot x \geq 0$. The choice of masses equal to $1/N$ is
of course a convenient normalisation so that the set of $N$ particles
has total mass 1. Notice that $W$ can be assumed to be symmetric,
i.e., $W(-x) = W(x)$ for all $x\in\R^d$, without loss of generality
since symmetrising the potential does not change the energy
\eqref{eq:discrete-energy}. The sum in \eqref{eq:discrete-energy} is
therefore halved since pairs of particles are counted twice; more
importantly, self-interactions are not present in the sum, in
agreement with classical physics.

A natural question regards the existence and shape of
\emph{minimisers} of this interaction energy among all possible
particle configurations; that is, particle configurations whose
interaction energy is the smallest possible. We also refer to these
configurations as \emph{ground states} or \emph{discrete minimisers},
and in this paper we are mainly concerned with their shape and size as
$N \to \infty$. As we rigorously show, for large $N$ these minimisers
are closely related to minimisers of the \emph{continuum interaction
  energy} $E$ defined by
\begin{equation}
  \label{eq:energy}
  E(\rho) = \dfrac{1}{2} \ird\ird W(x - y) \d\rho(x) \d\rho(y)
\end{equation}
for any $\rho\in\P(\R^d)$, where $\P(\R^d)$ is the set of Borel
probability measures on $\R^d$. This expression makes sense whenever
$W$ is bounded from below and measurable, in which case the value of
$E(\rho)$ is a number in $\R \cup \{+\infty\}$. A probability measure
$\rho$ minimising \eqref{eq:energy} on $\P(\R^d)$ is called a
\emph{continuum minimiser}. These continuum minimisers have been the
subject of several works \cite{SST,CCP, CFT15, BCLR13,BCY14} and in
particular their existence, under some technical assumptions, is
almost equivalent to the \emph{instability} of the potential $W$
\cite{SST,CCP}:

\begin{defn}[Instability]
  \label{defn:instability}
  Let $W \: \R^d \to \R \cup \{+\infty\}$ be a measurable function
  which is bounded from below and such that there exists
  \begin{equation*}
    W_\infty := \lim_{|x| \to \infty} W(x) \in \R\cup\{+\infty\}.
  \end{equation*}
  We say that $W$ is \emph{unstable} if there exists $\rho \in \P(\R^d)$ such that 
  \begin{equation*}
    E(\rho) < \textstyle{\frac12} W_\infty.  
  \end{equation*}
  We say that $W$ is \emph{stable} if it is \emph{not} unstable, i.e.,
  if for all $\rho\in\P(\R^d)$ we have $E(\rho) \geq W_\infty/2$; we
  say that $W$ is \emph{strictly stable} if the inequality is strict.
\end{defn}

This concept of stability is very close to the classical concept of
$H$-stability as given for example in
\cite{Ruelle}; see Definition \ref{defn:H-instability}. For
continuous potentials it was proved in \cite{SST} that they are
indeed equivalent, and for potentials with a mild singularity at the origin
we show in Section \ref{subsec:convergence-discrete} that
$H$-instability is implied by instability as given in the above
definition. We refer the reader to Section \ref{subsec:convergence-discrete} for
further details and a brief background of these concepts.
In \cite{CCP,SST} it was proved that under some technical assumptions
(for example, under Hypotheses \ref{hyp:bfb} and \ref{hyp:existence}
in the next section) continuum minimisers exist if and only if there
exists a probability measure $\rho$ with $E(\rho) \leq W_\infty/2$;
that is, if and only if W is unstable or there is $\rho$ with
$E(\rho) = W_\infty/2$. It is then natural to find that the same
concept of instability plays a crucial role in the behaviour of
discrete minimisers for large $N$---this constitutes the main
result in our paper: \emph{for unstable potentials, discrete
  minimisers approach the set of continuum minimisers as
  $N \to \infty$; while for strictly stable potentials, discrete
  minimisers grow in size without bound as $N \to \infty$}.

In order to state the main result precisely we need a few
definitions. The \emph{diameter} of a particle configuration
$\bX = (x_1,\dots,x_N) \in \R^{Nd}$, denoted $\diam \bX$, is just the diameter of the set
$\{x_1, \dots, x_N \} \subseteq \R^d$ and the \emph{empirical
  measure} associated to $\bX$ is
\begin{equation*}
  \mu_{\bX} := \frac1N\sum_{i=1}^N \delta_{x_i},
\end{equation*}
with $\delta_{x}$ the Dirac delta measure at a point $x \in \R^d$. We endow the set $\P(\R^d)$ with
the \emph{narrow} topology, obtained by duality with the space of
bounded continuous functions on $\R^d$, in accordance with the terminology in \cite{AGS}; we discuss more the narrow topology in Section \ref{sec:gamma}. We often identify
$\bX \in \R^{Nd}$ with its empirical measure $\mu_{\bX}$, and
accordingly we use on $\bX$ concepts that really apply to
$\mu_{\bX}$. For example, we say that a sequence $(\bXN)_{N\geq2}$,
with $\bXN \in \R^{Nd}$ for all $N\geq2$, converges to $\rho$
(narrowly) if $\mu_{\bXN}$ converges to $\rho$ in the narrow
topology. Postponing the precise hypotheses to Section
\ref{sec:hypotheses}, the following is our main result:

\begin{thm}[Main result]
  \label{thm:main}
  Assume that $W$ satisfies Hypotheses \ref{hyp:bfb},
  \ref{hyp:existence} and \ref{hyp:W-close-to-0} in Section
  \ref{sec:hypotheses}. For any $N\geq2$ the discrete energy $\E$ has
  a minimiser on $\R^{Nd}$, and for any sequence $(\bXN)_{N\geq2}$ of
  such minimisers the following statements hold:
  \begin{enumerate}
  \item\label{it:main-bounded} If $W$ is unstable, then the diameter of $\bXN$ is
    uniformly bounded for all $N$ and $(\bXN)_{N\geq2}$ has a
    subsequence which converges in the narrow sense, up
    to translations, to a minimiser of the continuum energy $E$
    as $N\to\infty$.
    
  \item\label{it:main-unbounded} If $W$ is strictly stable, then the diameter of
    $\bXN$ tends to $\infty$ as $N \to \infty$.
  \end{enumerate}
\end{thm}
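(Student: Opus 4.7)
The plan is to build the proof on two pieces: the narrow $\Gamma$-convergence of $E_N$ to $E$ (established earlier in the paper) together with an equi-coercivity analysis for discrete minimising sequences showing that, in the unstable case, no mass can escape to infinity.

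\emph{Existence of discrete minimisers.} For fixed $N \geq 2$, I would apply the direct method to the lower semi-continuous, translation-invariant functional $E_N$ on $\R^{Nd}$. The only obstruction to compactness is the possible escape of a group of particles to infinity, so I would proceed by induction on $N$ via a concentration-compactness dichotomy on the empirical measures. After fixing one particle at the origin, either a minimising sequence is tight---in which case the narrow limit is again an $N$-atom measure and yields the minimiser---or it splits into two clusters of sizes $k$ and $N-k$ separating to infinity. In the latter scenario the behaviour $W(x) \to W_\infty$ forces
\[
\liminf_j E_N(\bX^{(j)}) \geq \tfrac{k^2}{N^2} \inf E_k + \tfrac{(N-k)^2}{N^2} \inf E_{N-k} + \tfrac{k(N-k)}{N^2} W_\infty,
\]
while the reverse inequality follows by placing two sub-minimisers far apart (which exist by the induction hypothesis); so a minimiser is obtained in either case.

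\emph{Unstable case.} Instability provides $\rho^*$ with $c^* := E(\rho^*) < W_\infty/2$, and the $\Gamma$-limsup construction applied to $\rho^*$ yields $\limsup_N E_N(\bXN) \leq c^*$. Now suppose, for contradiction, that $\diam \bXN \to \infty$ along a subsequence. I would perform a concentration-compactness dichotomy on $\mu_{\bXN}$ after a suitable translation, extracting two clusters whose mass fractions converge to some $\alpha$ and $1-\alpha$, with mutual distance going to $\infty$ and intra-cluster narrow limits $\alpha \tilde\rho_1$ and $(1-\alpha) \tilde\rho_2$ for probability measures $\tilde\rho_i$. Using the $\Gamma$-liminf together with $W \to W_\infty$ on the cross pairs gives
\[
\liminf_N E_N(\bXN) \geq \alpha^2 E(\tilde\rho_1) + (1-\alpha)^2 E(\tilde\rho_2) + \alpha(1-\alpha) W_\infty \geq \inf E + \alpha(1-\alpha)\bigl(W_\infty - 2\inf E\bigr),
\]
which strictly exceeds $\inf E$ whenever $\alpha \in (0,1)$, since instability gives $W_\infty - 2\inf E > 0$; this contradicts $E_N(\bXN) \to \inf E$. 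Ruling out the boundary values $\alpha \in \{0,1\}$---in which only a vanishingly small fraction of particles escapes yet the diameter still diverges---is the main technical obstacle: I would iterate the dichotomy to peel off successive escaping subclusters, relying on the uniform integrability encoded by the empirical Morrey measures introduced earlier in the paper to prevent a thin halo of particles from producing an unbounded diameter. Once diameters are bounded, narrow compactness combined with the $\Gamma$-liminf immediately delivers a subsequence converging to a continuum minimiser.

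\emph{Strictly stable case.} Placing $N$ particles at mutually large distance gives $E_N \leq \tfrac{N-1}{2N} W_\infty$, so $\limsup_N \inf E_N \leq W_\infty/2$. If, for contradiction, $\diam \bXN$ remained bounded along a subsequence, then a translation would make $\mu_{\bXN}$ tight with a narrow limit $\rho \in \P(\R^d)$ of full mass. Strict stability forces $E(\rho) > W_\infty/2$, so the $\Gamma$-liminf would yield $\liminf_N E_N(\bXN) \geq E(\rho) > W_\infty/2$, contradicting the upper bound. Hence $\diam \bXN \to \infty$, as claimed.
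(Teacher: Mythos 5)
Your strictly stable case is sound and close in spirit to the paper's (the paper phrases it via the $\Gamma$-convergence framework for minimisers rather than the explicit $W_\infty/2$ dichotomy, but both hinge on the $\Gamma$-liminf plus nonexistence of continuum minimisers). Your existence argument via concentration-compactness and induction on $N$ would work, but it is substantially heavier machinery than the paper uses: Lemma \ref{lem:min-BRN} gives an elementary and constructive proof by showing that in every coordinate a minimiser has no gap wider than $2R_W$ (else a shift of one cluster strictly decreases the energy, by the radial monotonicity in Hypothesis \ref{hyp:existence}), which yields the explicit bound $\diam\bXN \leq 2\sqrt{d}(N-1)R_W$ without any compactness argument in $\P(\R^d)$.

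The unstable case is where your proposal has a genuine gap, and you correctly locate it yourself: ruling out the boundary values $\alpha \in \{0,1\}$, i.e.\ a vanishing fraction of particles escaping while the diameter still diverges. Your suggestion to ``iterate the dichotomy'' and ``rely on the uniform integrability encoded by the empirical Morrey measures'' does not close this gap. Morrey regularity bounds how concentrated $\mu_{\bXN}$ can be near each particle (an upper bound on local mass); it says nothing that prevents $o(N)$ particles from wandering arbitrarily far away, and the energy splitting inequality degenerates as $\alpha \to 0$ since the cross-term vanishes. The paper closes exactly this gap by a different route, which is the main technical contribution of the work: an \emph{approximate discrete Euler--Lagrange estimate} (Theorem \ref{thm:constant-bounded}), stating that $|P_i(\bXN) - 2E_N(\bXN)| \leq A N^{-k}$ uniformly in $i$, whose proof for singular $W$ in fact uses the discrete Morrey regularity of Theorem \ref{thm:discrete-morrey} in an essential way (to control the energy cost of moving one particle near another). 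Combined with the discrete instability $2E_N(\bXN) < W_\infty - 2s$, this forces a uniform \emph{lower} bound on the mass within radius $r$ of every particle (Lemma \ref{lem:lower-mass}), which then yields the uniform diameter bound (Theorem \ref{thm:discrete-diameter}) via a covering-plus-shifting argument analogous to Lemma \ref{lem:min-BRN}. In short: the missing idea is that the Euler--Lagrange condition rules out isolated particles at all scales, and this, not Morrey regularity of the escaping cluster, is what prevents a thin halo. Without this lemma (or some equivalent), your dichotomy argument does not terminate.
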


The case in which
$\inf_{\rho\in\P(\R^d)} E(\rho) = \min_{\rho\in\P(\R^d)} E(\rho) =
(1/2)\lim_{|x| \to \infty} W(x)$, that is, the case in which $W$ is stable
but not strictly stable, is not included in our main result. Indeed this is a critical case for which our
approach is not conclusive; discrete minimisers may still exist for
every $N$ but we cannot get a uniform bound on their diameter, which
prevents us from proving convergence to a continuum minimiser. The
precise hypotheses on $W$ are given in Section \ref{sec:hypotheses}
but we already point out that the \emph{power-law} potentials
\begin{equation}
  \label{eq:power-law}
  W(x) =
  \dfrac{|x|^a}{a} - \dfrac{|x|^b}{b}
  \quad \text{with}
  \quad
  \begin{cases}
    0 < b < a & \text{when $d\in\{1,2\}$},
    \\    
    2-d < b < a,\quad b \neq 0, & \text{when $d \geq 3$},
  \end{cases}
\end{equation}
satisfy all assumptions in the main theorem and are unstable (and thus their associated
discrete minimisers behave as Theorem
\ref{thm:main}\eqref{it:main-bounded}). The \emph{Morse potentials}
\begin{equation}
  \label{eq:Morse}
  W(x) = C_\mt{r} \exp(-|x|/\ell_\mt{r})
  - C_\mt{a} \exp(-|x|/\ell_\mt{a}),
\end{equation}
for some positive constants $C_\mt{r},\ell_\mt{r},C_\mt{a}$ and
$\ell_\mt{a}$, also satisfy all assumptions and are unstable if $\ell_\mt{r} < \ell_\mt{a}$ and $C_\mt{r}/C_\mt{a} < (\ell_\mt{a}/\ell_{r})^d$; see \cite[Proposition 3.2]{CCP}.

Theorem \ref{thm:main} gives a natural link between the discrete
energy \eqref{eq:discrete-energy} and the continuum one
\eqref{eq:energy}. In the same way as \cite{CCP,SST} showed that strict
stability is the property of the potential that determines existence
or not of continuum minimisers, Theorem \ref{thm:main} shows that it
also determines the existence or not of a limit of the family of
discrete minimisers as $N \to \infty$. In fact our proof follows
``discrete versions'' of arguments in \cite{CCP}. Regularity results
for continuum minimisers in \cite{BCLR13} are also crucial and our
proof contains discrete analogues of these. Let us now give
some background motivation for the problem we are considering and
then let us sketch the strategy of our proof.

\medskip
\noindent \textbf{Previous results and motivation.} Discrete
minimisers represent the natural minimal energy configurations of $N$
particles under the given potential in the absence of any external
forces and without thermal fluctuations (in other words, these are
classical ground states at temperature 0). Understanding the shape of
these ground states (and those of related energy functionals) when the
number $N$ of particles is very large is of obvious interest in
statistical mechanics \cite{Serfaty2016,PS2015,Theil}, with direct
implications in materials science
\cite{MPS2014,MS2014,MPS2015,GPPS2013}. For physically relevant
potentials such as the Lennard-Jones potential
$W(x) = |x|^{-12} - |x|^{-6}$ the conjectured behaviour is that
\emph{crystallisation} takes place as $N \to \infty$. That is:
minimisers have particles placed almost at the vertices of a regular
triangular lattice, approaching the lattice as $N$ increases. This has
been rigorously proved for certain potentials similar to Lennard-Jones
in dimensions 1 and 2 \cite{Theil,GR} and for some other very specific
potentials \cite{MPS2014,MS2014}, but is in general an unsolved
problem; for results in this direction, see also \cite{PS2015} for some interaction energies with an external confining potential
and \cite{Serfaty2016} for systems with Coulomb interactions and their
links to other mathematical problems. Even if showing a
crystallisation property is remarkably hard, one can make a weaker
observation: for certain potentials, including Lennard-Jones, the
\emph{diameter} of ground states seems to increase without bound as
$N \to \infty$, while for others the diameter seems to tend to a fixed
value. This is part of the content of Theorem \ref{thm:main}, whose
main restriction in this setting is that it essentially requires the
potential $W$ to be less singular than $|x|^{2-d}$ at $x=0$ (such as
for example \eqref{eq:power-law}). When the singularity is stronger,
between $|x|^{-d}$ and $|x|^{2-d}$, we expect our main result still to
be true, although we are unable to show it since potentials in this
case do not satisfy Hypothesis \ref{hyp:W-close-to-0}. Hence our
statement does not say anything about the Lennard-Jones case (although the concept of stability still makes sense, and in fact the Lennard-Jones potential is stable), but does
show that minimisers grow in diameter without bound for a range of
stable potentials with a possible singularity at $x=0$.

In addition to their relevance in statistical mechanics, an
important more recent motivation for our results comes from the field
of collective behaviour, where shapes of self-organised structures in
some individual-based models exhibit very interesting phenomena and
are closely related to those of discrete minimisers \cite{DCBC,
  KSUB11, ABCvB14, CHM14, BKSUB15, BUKB2012}; see the survey on emergent behaviour \cite{KCBFL} and the references therein. In this context, models
aim at capturing the behaviour of a large number of individuals, with
applications to fish, cattle, birds, ants, and crowds of people. In
very simplified models, interaction through a potential reflects a
tendency of individuals to avoid close contact while keeping a
tendency to stay close to the group. The study of these models has led
to different questions regarding the minimisers of
\eqref{eq:discrete-energy}, mainly since the potentials involved are
not determined by physics but by phenomenological considerations in
each particular model. This has sparked interest in the shape of
minimisers for potentials which are very different from those found in
physics, including potentials with a mild or no singularity at 0 or
which tend to infinity at large distances. The paper \cite{DCBC} is
the first example we know of where the link was made between the
stability properties of the potential and the size of stationary
states for a potential interaction. In it, a particular time-dependent
interaction model was considered with the Morse potential
\eqref{eq:Morse} and its asymptotic states were numerically
studied. It was observed that their size increases with $N$ for stable
potentials while it does not for unstable ones. This is precisely the behaviour which Theorem \ref{thm:main} aims at justifying
rigorously.

Minimisers of the continuum energy \eqref{eq:energy} are also of
interest in collective behaviour models
\cite{Fetecau2012Equilibria,Fetecau2011Swarm,CCH2}, in the theory of
nonlocal partial differential equations
\cite{CSV13,CV11,citeulike:12903864}, and again in connection to
statistical mechanics \cite{Bavaud}. They display interesting effects
such as a link between the repulsive singularity of the potential and
smoothness of minimisers
\cite{BCLR13,Fellner2010Stability,Fellner2010Stable,CSV13}; they are
connected to solutions to obstacle problems in certain cases
\cite{CDM2016,CV}; and for specific potentials $W$ they are also linked
to the theory of random matrices \cite{CGZ2014}. These continuum
minimisers are often studied by numerically solving an $N$-particle
approximation, with the assumption that stationary states for large
$N$ are good approximations to the continuum ones. As far as we know,
our present results are the first where a justification of this is
given. Of course, in order to make the results practical for numerical
simulation it would be very useful to estimate the rate of
convergence to continuum minimisers as $N \to \infty$ in Theorem
\ref{thm:main}; this seems a worthwhile but difficult question, since
even the uniqueness of minimisers is unclear (except for specific
potentials $W$ \cite{Fetecau2012Equilibria,CV11}).

Let us mention as well that the connection between the discrete and
continuum energies is a hard question in mean-field limit results for
dynamical problems \cite{HJ2007,CCHS2016,CCH,CDFLS,CCR11}, especially
for potentials which have a singularity at $x=0$. Roughly speaking,
the main difficulty is to show that the unbounded forces between
particles resulting from the singularity are in fact negligible for
large $N$ if one only cares about the overall particle
density. Unsurprisingly, our proof is much more delicate for singular
potentials and yields more interesting estimates at the discrete
level in that case.

\medskip
\noindent \textbf{Strategy of proof.} Our general strategy is based on
drawing a parallel discrete version of several results which have been
recently obtained for continuum minimisers. A first one is the
\emph{regularity} of continuum minimisers, studied in
\cite{BCLR13,CSV13,CDM2016}. We describe this informally now and we
refer the reader to Section \ref{sec:prop-continuum} for full details. If the
potential $W$ behaves like $-|x|^{b}/b$ close to $x = 0$ for some
$2-d < b < 2$, then it was proved in \cite{BCLR13} that the dimension of
the support of continuum minimisers is at least $2-b$. In fact, a
stronger regularity result is a direct consequence of the arguments in
\cite{BCLR13}, though it was not explicitly remarked there: it holds
that $|x|^{b-2} * \rho$ is a bounded function for any minimiser
$\rho$ and hence one obtains (see Section \ref{sec:prop-continuum})
that $\rho$ is in the Morrey space of measures which
satisfy $$\rho(B_r) \leq C r^{2-b}$$ for any ball $B_r$ of radius
$r > 0$, and some $C > 0$ independent of the ball $B_r$. Now, an
analogue of this regularity is needed for discrete minimisers
$\bX = (x_1,\dots,x_N) \in \R^{Nd}$, with the difficulty that the
empirical measure
$$\mu_{\bX} := \frac1N\sum_{i=1}^N \delta_{x_i}$$ cannot
satisfy the same bound, being a sum of Dirac delta functions. Instead
of this, we prove the following variation: the total mass of
particles inside a ball of radius $r$ \emph{centred at one of the
  particles} is less than a constant times $r^{2-b}$ if one does not
count the particle at the center. In other words, there exists a universal
constant $C > 0$ depending only on $W$ such that
\begin{equation}
  \label{eq:discrete-Morrey-intro}
  \mu_{\bX} \big( B_r(x_i) \big)
  \leq C r^{2-b} + \tfrac1N
  \quad \text{for $i\in\{1, \dots, N\}$}
\end{equation}
for any discrete
minimiser $\bX = (x_1,\dots,x_N) \in \R^{Nd}$ (always under the assumption that $W(x) \sim -|x|^b/b$ for $x \sim
0$).
This motivates an interesting definition of ``empirical Morrey
measures'' which serves as a discrete version of the Morrey spaces; see Sections \ref{sec:prop-continuum} and \ref{sec:prop-discrete}
for details.

Another important basic property of continuum minimisers is that they
satisfy the following conditions, as proved in \cite{BCLR13} (and
informally noticed in \cite{Bavaud} without a rigorous proof): if a
probability measure $\rho$ minimises \eqref{eq:energy} then
\be\label{eq:ELcont}
  \begin{cases}
    W*\rho(x) = 2E(\rho) & \mbox{for $\rho$-almost every $x\in\R^d$},\\
    W*\rho(x)\geq 2E(\rho) & \mbox{for almost every $x\in\R^d$}.
  \end{cases}
\ee
The quantity $W * \rho(x)$ represents the potential created by the mass
distribution $\rho$ at the point $x \in \R^d$; the above statement
says in particular that it is almost everywhere constant in the
support of $\rho$. We refer to the condition in the first line as the
\emph{Euler--Lagrange} equation. The quantity corresponding to
$W*\rho(x)$ in the discrete case, for a particle distribution
$\bX \in \R^{Nd}$, is
\begin{equation*}
  P_i(\bX) := \frac1N\sum_{\substack{j=1\\j\neq i}}^N W(x_i-x_j)
  \quad \mbox{for all $i\in\oneN$},
\end{equation*}
which is the potential at position $x_i$ created by all particles but that at $x_i$. Interestingly, for a discrete minimiser
this does \emph{not} seem to be constant at all sites $i$, but we show
a bound on its variation across sites which decays asymptotically as
$N \to \infty$: there exist $A > 0$ and $0 < k \leq 1$ such that
\begin{equation}
  \label{eq:EL}
  |P_i(\bX) - P_j(\bX)| \leq AN^{-k}
  \quad \mbox{for all $i,j\in\oneN$}
\end{equation}
for any discrete minimiser $\bX \in \R^{Nd}$. The constants $A$ and
$k$ are independent of $N$ (and are constructive) and thus this shows
that for large $N$ the potential at two different particles cannot
differ by a large amount.

Finally, continuum minimisers are known to be compactly supported if $W$ is increasing at long range,
with a constructive bound as proved in \cite{CCP}. Analogously, in
Section \ref{sec:diameter-discrete} we give a uniform bound on the
diameter of discrete minimisers, which can be understood as a discrete
version of the argument in \cite{CCP}, using the approximate
Euler--Lagrange property \eqref{eq:EL} and the ``discrete Morrey
regularity'' \eqref{eq:discrete-Morrey-intro}. We point out that the
latter is needed only for potentials which are unbounded at $x=0$,
which are the main difficulty in our result.

We also phrase some of our results using the terminology of
$\Gamma$-convergence in Section \ref{sec:gamma}. Our proof of
convergence of minimisers contains the fact that the discrete energy
\eqref{eq:discrete-energy} $\Gamma$-converges to the continuum energy
\eqref{eq:energy} in the narrow topology, which depends on
the singularity of the potential $W$. We remark that there is a
previous related result in \cite{CT2016}, where the
$\Gamma$-convergence of the \emph{regularised} continuum energy
(associated to a mollified potential $W_\epsilon$) to the energy
\eqref{eq:energy} (associated to $W$) was studied as the
regularisation parameter $\epsilon$ tends to $0$. Hence this latter
result is concerned with convergence of the continuum energy
\eqref{eq:energy} for different potentials, while in the present
paper we study the convergence of the discrete energy
\eqref{eq:discrete-energy} as $N \to \infty$ for a fixed potential
$W$.

\medskip The structure of the paper is as follows. In Section
\ref{sec:hypotheses} we gather some necessary definitions and state
precisely our hypotheses. Sections \ref{sec:prop-continuum} contains
some simple observations on the regularity of continuum minimisers,
directly deduced from \cite{BCLR13}. Section \ref{sec:prop-discrete}
gathers several properties of discrete minimisers, including
existence, ``discrete regularity'' (a discrete version of the
continuum one) and an approximate Euler--Lagrange property. Finally,
in Section \ref{sec:convergence-minimisers} the proof of our main
result is completed, showing that discrete minimisers approach the set
of continuum ones as the number of particles goes to infinity. In
Section \ref{sec:gamma} we prove a technical result that is needed in
earlier proofs: the discrete energy $\Gamma$-converges to the
continuum one; essentially, we show that one may approximate a
probability measure $\rho$ by a discrete distribution in such a way
that the interaction energy is also approximated if $\rho$ has a
suitable Morrey regularity.

\section{Preliminaries and hypotheses}
\label{sec:hypotheses}

In order to state the full assumptions in our results we need to
introduce a couple of definitions. The first is the concept of
$\beta$-repulsivity, taken from \cite{BCLR13}. For any $R\geq0$ and
$z\in\R^d$ we denote by $B_R(z)$ the open ball of radius $R$ and
centre $z$; in the case $z=0$ we simply write $B_R$. Analogously, we
write $\overline{B}_R(z)$, or $\overline{B}_R$, for the closed
ball. The integral $\dashint_A$ denotes the averaged integral over a
region $A$, that is, $\int_A$ divided by the Lebesgue measure of $A$.

\begin{defn}[Approximate and generalised Laplacians]
  \label{defn:laplacians} 
  Let $W \: \R^d \to \R \cup \{+\infty\}$ be a locally integrable
  function. The \emph{approximate Laplacian} of $W$ is defined, for
  all $\e>0$, by
  \begin{equation*}
    \Delta^\e W(x) = \frac{2(d+2)}{\e^2}\left(\dashint_{B_\e} W(x+y) \d y - W(x)\right)
    \quad \mbox{for all $x\in\R^d$},
  \end{equation*}
  and the \emph{generalised Laplacian} of $W$ is
  defined by
  \begin{equation*}
    \Delta^0 W(x) = \liminf_{\e \to 0} \Delta^\e W(x)
    \quad \mbox{for all $x\in\R^d$}.
  \end{equation*}
\end{defn}

Note that $\Delta^\varepsilon W$ makes sense as a number in
$\R \cup \{+\infty\}$, and $\Delta^0 W$ may be a number in
$\R \cup \{-\infty, +\infty\}$. Also, if the classical Laplacian of
$W$ exists at some $x\in\R^d$, then $\Delta W(x) = \Delta^0 W(x)$.

\begin{defn}[$\beta$-repulsivity]
  \label{defn:beta-repulsive}
  Let $\beta > 0$ and $W \: \R^d \to \R \cup \{+\infty\}$. We say that
  $W$ is \emph{$\beta$-repulsive} at the origin if it is locally
  integrable and there exist $\delta > 0$ and $C > 0$ such that
  \begin{equation}
    \label{eq:beta-rep} -
    \Delta^0W(x) \begin{cases} \geq C|x|^{-\beta} & \mbox{for all
        $x \in \R^d$ with $0 < |x| < \delta$},\\ = +\infty &
      \mbox{for $x = 0$}. \end{cases}
  \end{equation}
\end{defn}
Notice that the notion of $\beta$-repulsivity is sensitive to the
value of $W$ at $x=0$, so it does not hold if we arbitrarily set
$W(0) := W_0 \in \R$ when $W$ is lower semicontinuous (the second line of \eqref{eq:beta-rep} would not
be satisfied). Typically, potentials with a singularity equal to or
stronger than the Newtonian are generally \emph{not} $\beta$-repulsive
for any $\beta > 0$.  Indeed, if $W(x) := -|x|^b/b$ for $b \neq 2-d$
(with the understanding that $|x|^0/0 = \log|x|$), one can easily
check that $\Delta W(x) = (2-b -d) |x|^{b-2}$ for all $x \neq 0$,
which leads to a violation of the first line of \eqref{eq:beta-rep} if
$b < 2 -d$. If $b=2-d$, then $\Delta W$ is a multiple of the
Dirac measure at the origin and \eqref{eq:beta-rep} again cannot be
satisfied; the Newtonian potential is therefore not $\beta$-repulsive,
for any $\beta > 0$.  On the opposite, if $2-d < b < 2$, that is, if $W$
has a milder singularity than the Newtonian potential, then it is
$\beta$-repulsive with $\beta = 2 - b$.

Our first assumption on $W$ is the most basic, ensuring that the
interaction energies we use are well defined and have suitable lower
semicontinuity properties:

\begin{hyp}
  \label{hyp:bfb}
  $W\: \R^d \to \R \cup \{+\infty\}$ is lower semicontinuous, bounded
  from below by a finite constant $W_\mathrm{min} \in \R$, and locally
  integrable.
\end{hyp}

In order to prove existence of discrete minimisers we need to add the
following assumption, whose main point is to ensure that $W$ is
attractive at long distances:

\begin{hyp}
  \label{hyp:existence}
  There exists
  $\lim_{|x| \to \infty} W(x) =: W_\infty \in \R \cup \{+\infty\}$,
  $W$ is symmetric, and
  there is $R_W > 0$ such that $W$ is radially strictly increasing on
  $\R^d\setminus B_{R_W}$.
\end{hyp}
As mentioned in the introduction, the condition on the symmetry of $W$
implies no loss of generality since nonsymmetric potentials can be
symmetrised without changing the value of the interaction
energy. Finally, in order to show a uniform bound on the support of
discrete minimisers we need to assume a specific behaviour of the
potential at the origin:

\begin{hyp}
  \label{hyp:W-close-to-0}
  One of the two following properties holds:
  \begin{subhyp}
    \label{hyp:finite}
    \vspace{-0.1cm} $W$ is bounded from
    above and upper semicontinuous.
  \end{subhyp}
  \begin{subhyp}
    \label{hyp:beta}
    \vspace{-0.1cm} $W$ is
    $\beta$-repulsive for some $2 < \beta < d$, it belongs to
    $\C^1(\R^d\setminus \{0\})$, and for some $C_W > 0$ we have
    \begin{align*}
      \Delta^0W(x) \leq C_W
      &\quad \text{for all $x \in \R^d$},
      \\
      W(x) \leq C_W |x|^{2-\beta}
      &\quad \text{for all $|x| \leq 1$},
      \\
      |\nabla W(x)| \leq C_W |x|^{1-\beta}
      &\quad \text{for all $|x| \leq 1$}.
    \end{align*}
  \end{subhyp}
\end{hyp}
Notice that Hypothesis \ref{hyp:finite} and the lower semicontinuity
and boundedness from below of Hypothesis \ref{hyp:bfb} imply that
$W\in\C(\R^d)$. Hypothesis \ref{hyp:finite} tells us that $W(0)$ is
bounded, whereas Hypothesis \ref{hyp:beta} includes unbounded
potentials with a specific repulsive behaviour at the origin. The
radius $1$ in the bounds of $W$ and $|\nabla W|$ is not fundamental
and all proofs work with minor modifications if these bounds hold for
$|x| < r_0$ for a given positive $r_0$. Since we must require that $W$ satisfies Hypotheses
\ref{hyp:bfb}--\ref{hyp:W-close-to-0}, we obtain our results for
singularities up to, and not including, that of the Newtonian
potential $|x|^{2-d}/(d-2)$ (or $-\log |x|$ when $d=2$), with the main
restriction coming from Hypothesis \ref{hyp:beta}.

Typically, the potentials of interest are attractive at long ranges
and repulsive at short ones, and are smooth away from $0$ with a
possible singularity at the origin. As already mentioned, a class of potentials satisfying
Hypotheses \ref{hyp:bfb}--\ref{hyp:W-close-to-0} consists of the power-law
combinations \eqref{eq:power-law}, where we set $W(0) = +\infty$ if $b < 0$. Notice that Hypothesis
\ref{hyp:finite} covers the cases with $b \geq 0$, while Hypothesis
\ref{hyp:beta} covers the cases with $b < 0$.  When $d\in\{1,2\}$ all
power-law potentials of the type \eqref{eq:power-law} fall in the case of
Hypothesis \ref{hyp:finite} due to the condition $0 < b$; in
dimensions $1$ and $2$ the functions $|x|^b$ are not $\beta$-repulsive
(for any $\beta$) if $b \leq 0$.

Let us finally make a note in this section of the terminology used. As it is clear from the introduction, we refer in this paper to \emph{global} minimisers (of the continuum or discrete energy) simply as minimisers. This is because we are not concerned with local minimisers, except on some limited occasions where we clearly mention it as well as the underlying topology in the continuum case. Also, we say that $\rho\in A\subset \P(\R^d)$ is a minimiser of the continuum energy \emph{on} the set $A$ if it minimises the energy among all elements of $A$; this holds in the discrete setting too.

\section{Regularity of continuum minimisers}
\label{sec:prop-continuum}

We make a short observation on the regularity of continuum
minimisers which is essentially contained in the results of
\cite{BCLR13}, but is not mentioned there explicitly. Later, in Section
\ref{sec:morrey-discrete}, we carry out a discrete version of these
arguments. Our main result on continuum minimisers states that they
are bounded in a specific Morrey space of measures for
$\beta$-repulsive potentials. We always denote by $\M(\R^d)$ the space
of finite (signed) Borel measures on $\R^d$.

\begin{defn}[Morrey spaces]\label{defn:morrey-spaces}
  Let $p \in [1,\infty]$ and $\rho \in \M(\R^d)$. We
  say that $\rho$ belongs to the \emph{$p$-Morrey space} $ \M_p(\R^d)$
  if there exists a constant $M > 0$ such that, for all $r > 0$ and
  $x \in \R^d$,
  \begin{equation*}
    |\rho|(B_r(x)) \leq Mr^{d/q},
  \end{equation*}
  where $q$ is the Hölder dual of $p$ and $|\rho|(A)$ is the
  total variation of $\rho$ in a Borel set $A\subset \R^d$. For any
  $\rho \in \M_p(\R^d)$ we define its \emph{$p$-Morrey norm} by
  \begin{equation*}
  \norm{\rho}_{\M_p(\R^d)} = \sup\left\{r^{-d/q}|\rho|(B_r(x)) \st (r,x) \in
    (0,\infty)\times \R^d \right\}.
  \end{equation*}
\end{defn}
Observe that for $p = 1$ we have $q = +\infty$ and the above
definition just states that $\rho$ is finite, so $\M_1(\R^d)$ is just
$\M(\R^d)$ with the total variation norm. Similarly, for $p=\infty$ we
have $q=1$ and $\M_{\infty}(\R^d)$ can be identified with
$L^\infty(\R^d)$.

\begin{thm}\label{thm:morrey}
  Assume $W$ satisfies Hypotheses \ref{hyp:bfb} and
  \ref{hyp:existence} and is unstable. Suppose also that $W$ is
  $\beta$-repulsive for some $0 < \beta < d$ and $\Delta^0W \leq C_W$
  for some $C_W > 0$. If $\rho \in \P(\R^d)$ is a minimiser of the
  continuum interaction energy $E$, then $\rho \in \M_p(\R^d)$ with
  $p = d/(d-\beta)$ and $\|\rho\|_{\M_p(\R^d)} \leq 2^\beta C'$ for
  some $C'>0$ only depending on $W$.
\end{thm}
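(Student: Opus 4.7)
The plan is to adapt the argument of \cite{BCLR13} and combine it with a simple covering trick. The crux is the Euler--Lagrange relation~\eqref{eq:ELcont} combined with the $\beta$-repulsivity of $W$ at the origin.

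First I would fix a point $x_0\in\supp\rho$ for which $(W*\rho)(x_0) = 2E(\rho)$ (such a point exists $\rho$-almost everywhere by~\eqref{eq:ELcont}). Since $W*\rho \geq 2E(\rho)$ Lebesgue-almost everywhere, for every $\e>0$ the average over $B_\e(x_0)$ satisfies
\[
\dashint_{B_\e(x_0)} (W*\rho)(y)\,\d y \;\geq\; 2E(\rho) \;=\; (W*\rho)(x_0),
\]
which, by Definition~\ref{defn:laplacians} and Fubini, rewrites as
\[
\ird \Delta^\e W(x_0-y)\,\d\rho(y) \;=\; \Delta^\e(W*\rho)(x_0) \;\geq\; 0.
\]

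The key step is to pass to the limit $\e\to 0$ in the above inequality and extract a \emph{quantitative} bound on a Riesz-type integral of $\rho$ around $x_0$. Splitting the integrand as $\Delta^\e W = (\Delta^\e W)^+ - (\Delta^\e W)^-$ and rearranging gives $\int (\Delta^\e W)^-(x_0-y)\,\d\rho(y) \leq \int (\Delta^\e W)^+(x_0-y)\,\d\rho(y)$; by $\Delta^0 W\leq C_W$ the right-hand side is controlled by $C_W$ after taking $\liminf$, while Fatou's lemma applied to the left-hand side, together with $\beta$-repulsivity $-\Delta^0 W(z)\geq C|z|^{-\beta}$ on $B_\delta\setminus\{0\}$, should give
\[
C\int_{B_\delta(x_0)} \frac{\d\rho(y)}{|x_0-y|^\beta} \;\leq\; \ird (-\Delta^0 W)^+(x_0-y)\,\d\rho(y) \;\leq\; C_W.
\]
I expect this to be the hardest step: the liminf in the definition of $\Delta^0 W$ interacts awkwardly with the liminf of the integrals, and one likely needs either to exploit additional monotonicity/continuity of $\e\mapsto \Delta^\e W$, or to choose an $\e$-sequence by hand and treat the singular region near $0$ and the region where $\Delta^\e W$ is bounded separately.

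Once the Riesz-integral bound above is established, the Morrey inequality centred at $x_0$ follows at once: for every $r\in(0,\delta)$,
\[
r^{-\beta}\rho(B_r(x_0)) \;\leq\; \int_{B_r(x_0)} \frac{\d\rho(y)}{|x_0-y|^\beta} \;\leq\; C',
\]
with $C'>0$ depending only on $W$; for $r\geq \delta$ we just use $\rho(B_r(x_0))\leq 1 \leq \delta^{-\beta}r^\beta$ and absorb the constant. Since the EL equality holds $\rho$-a.e., this estimate extends by closedness of $\supp\rho$ to every $x_0\in\supp\rho$. Finally, for a general $x\in\R^d$, either $B_r(x)\cap\supp\rho=\emptyset$, so $\rho(B_r(x))=0$, or we may pick $x_0\in B_r(x)\cap\supp\rho$; then $B_r(x)\subset B_{2r}(x_0)$ and $\rho(B_r(x))\leq C'(2r)^\beta = 2^\beta C' r^\beta$. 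Since $d/q = \beta$ when $p = d/(d-\beta)$, this gives $\|\rho\|_{\M_p(\R^d)}\leq 2^\beta C'$, as required.
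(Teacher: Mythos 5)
Your overall route is the same as the paper's: use the variational structure to get an inequality of the form ``$\Delta^0 W * \rho \geq 0$ on $\supp\rho$,'' combine it with $\beta$-repulsivity and $\Delta^0 W\leq C_W$ to get a uniform Riesz-potential bound $\int |x_0-y|^{-\beta}\d\rho(y)\leq C'$ on $\supp\rho$, and then convert that into the Morrey bound by the standard covering argument (your last paragraph coincides with Lemma~\ref{lem:morrey}). The real difference is in how the generalised-Laplacian inequality is obtained.

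The paper does not attempt the $\e\to 0$ passage you describe: it quotes \cite[Corollary~1]{BCLR13} (stated as Lemma~\ref{lem:lap0} here) which already yields $\int \Delta^0 W(x-y)\,\d\rho(y)\geq 0$ for \emph{every} $x\in\supp\rho$, using the fact (from \cite{CCP}) that minimisers are compactly supported so that only local integrability of $W$ is needed. Lemma~\ref{lem:bound-potential-beta} then splits $\rho=\rho_0+\rho_1$ around $B_\delta(x_0)$ and combines $\Delta^0 W*\rho(x_0)\geq 0$ with $-\Delta^0 W\geq C|\cdot|^{-\beta}$ near $0$ and $\Delta^0 W\leq C_W$ away from it to get the Riesz bound.

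The step you yourself flag as hardest is in fact where your plan breaks. You want to deduce $\int \Delta^0 W(x_0-y)\,\d\rho(y)\geq 0$ (or directly the Riesz bound) from $\int \Delta^\e W(x_0-y)\,\d\rho(y)\geq 0$ for all $\e>0$, but neither half of your split $(\Delta^\e W)^\pm$ behaves as claimed.
For the positive part, $\Delta^0 W\leq C_W$ only controls $\liminf_\e \Delta^\e W$; there is no reason the values $\Delta^\e W$ at small positive $\e$ (and hence $\liminf_\e\int(\Delta^\e W)^+\,\d\rho$) are bounded by $C_W$.
For the negative part, Fatou gives $\liminf_\e\int (\Delta^\e W)^-\,\d\rho \geq \int \liminf_\e(\Delta^\e W)^-\,\d\rho$, but $\liminf_\e(\Delta^\e W)^- = \liminf_\e(-\Delta^\e W)^+ \geq \bigl(\liminf_\e(-\Delta^\e W)\bigr)^+ = \bigl(-\limsup_\e\Delta^\e W\bigr)^+$, which involves $\limsup_\e\Delta^\e W$, not $\liminf_\e\Delta^\e W = \Delta^0 W$. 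So the $\beta$-repulsivity hypothesis, which constrains only $\Delta^0 W$, is not applicable to the quantity you end up with. Unless you assume the limit $\lim_{\e\to 0}\Delta^\e W$ exists (which the paper's Definition~\ref{defn:laplacians} does not), this gap cannot be closed by a soft Fatou argument. The correct move is to invoke \cite[Corollary~1]{BCLR13} (Lemma~\ref{lem:lap0}), where this limit passage is carried out carefully, together with the compact-support property of continuum minimisers from \cite{CCP} to justify the local integrability needed there.

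One further small point: you restrict the Riesz bound to $B_\delta(x_0)$, which is fine, but you should then note explicitly (as the paper does) that the contribution of $\R^d\setminus B_\delta(x_0)$ to $\int|x_0-y|^{-\beta}\d\rho(y)$ is at most $\delta^{-\beta}$, giving the global bound $C' = C_W/C + \delta^{-\beta}$ directly rather than a two-regime treatment of $r$.
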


Note that if $\rho\in \M_p(\R^d)$ then the Hausdorff dimension of the
support of $\rho$ is bounded from below by $d/q$ by Frostman's lemma
\cite{Mattila}; Theorem \ref{thm:morrey} thus tells us that the dimension of the support of a continuum minimiser if at least $\beta$. This dimensionality property is one of the main
results in \cite{BCLR13} and our observation is that almost the same
argument used in \cite{BCLR13} actually reaches the stronger conclusion that
$\rho \in \M_p(\R^d)$. Theorem \ref{thm:morrey} is directly deduced
from the next three lemmas. The first one can be found almost readily in
\cite[Corollary 1]{BCLR13}. The second one states that a minimiser can
be convolved with $\abs{\cdot}^{-\beta}$ to give a bounded function, and
is proved by following and adapting the proof of \cite[Proposition
3]{BCLR13}. The third one comes from potential theory and states that
a probability measure $\rho$ whose convolution with $\abs{\cdot}^{-\beta}$ is bounded
is $p$-Morrey regular for $p = d / (d-\beta)$; it can be found for
example in \cite[Section 8]{Mattila}).

\begin{lem}
  \label{lem:lap0}
  Assume $W$ satisfies Hypotheses \ref{hyp:bfb} and
  \ref{hyp:existence} and is unstable. Suppose also that
  $\Delta^0W\leq C_W$ for some $C_W>0$. If $\rho$ is a minimiser of
  $E$, then $\Delta^0 W *\rho(x)\geq0$ for all $x\in\supp\rho$.
\end{lem}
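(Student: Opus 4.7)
The plan is to exploit the Euler--Lagrange characterisation of the minimiser $\rho$ together with the mean-value definition of the approximate Laplacian $\Delta^\varepsilon$. Testing $E$ against competitors of the form $\rho + t(\nu - \rho)$ with $\nu \in \P(\R^d)$ yields the standard first-order conditions: $W*\rho \geq 2E(\rho)$ Lebesgue-a.e.\ on $\R^d$ and $W*\rho = 2E(\rho)$ for $\rho$-a.e.\ $x$. Since $W$ is lower semicontinuous and bounded below by Hypothesis \ref{hyp:bfb}, the convolution $W*\rho$ is lower semicontinuous as well, so the open set $\{W*\rho < 2E(\rho)\}$, having zero Lebesgue measure, must be empty. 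Hence $W*\rho \geq 2E(\rho)$ everywhere. For $x \in \supp\rho$, approximating $x$ by a sequence of points in the full-measure set where equality holds $\rho$-a.e.\ and using lower semicontinuity once more gives $W*\rho(x) \leq 2E(\rho)$, so in fact $W*\rho(x) = 2E(\rho)$ at every $x \in \supp\rho$.

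Next, fix $x \in \supp\rho$ and $\varepsilon > 0$. Since $(W*\rho)(x+y) \geq 2E(\rho) = (W*\rho)(x)$ for every $y \in B_\varepsilon$, the definition of the approximate Laplacian gives
$$
\Delta^\varepsilon(W*\rho)(x) = \frac{2(d+2)}{\varepsilon^2}\left(\dashint_{B_\varepsilon}(W*\rho)(x+y)\,dy - (W*\rho)(x)\right) \geq 0.
$$
A Fubini argument (legitimate because $W$ is bounded from below, so all iterated integrals make sense in $[W_{\mathrm{min}},+\infty]$) rewrites this quantity as $\int_{\R^d}\Delta^\varepsilon W(x-y)\,d\rho(y)$. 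Therefore $\int_{\R^d}\Delta^\varepsilon W(x-y)\,d\rho(y) \geq 0$ for every $\varepsilon > 0$, and consequently $\liminf_{\varepsilon\to 0}\int_{\R^d}\Delta^\varepsilon W(x-y)\,d\rho(y) \geq 0$.

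The last step, exchanging liminf and integral to conclude $\Delta^0 W*\rho(x) = \int_{\R^d}\Delta^0 W(x-y)\,d\rho(y) \geq 0$, is the main technical obstacle, since $\Delta^\varepsilon W$ need not converge pointwise along a single sequence. Here the assumption $\Delta^0 W \leq C_W$ is essential: it supplies a $\rho$-integrable upper bound on the pointwise liminf and allows one to truncate the approximate Laplacians from above at level $C_W$ without altering the liminf, placing the problem in a regime where a Fatou-type argument is applicable. This passage to the limit mirrors precisely the one in \cite[Corollary 1]{BCLR13}, and carrying it out yields the desired inequality $\Delta^0 W*\rho(x) \geq 0$.
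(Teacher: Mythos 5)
The paper's own ``proof'' of this lemma is essentially a citation to \cite[Corollary~1]{BCLR13}, together with the observation that the compact support of minimisers (guaranteed by \cite{CCP} under the instability hypothesis) allows one to relax the uniform local integrability assumed in \cite{BCLR13} to mere local integrability. Your proposal instead attempts to reconstruct the BCLR13 argument from scratch, which is a legitimate ambition, but there are two concrete problems and one omission.

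First, your claim that $\{W*\rho < 2E(\rho)\}$ is open because $W*\rho$ is lower semicontinuous is backwards. For a lower semicontinuous function $f$, the \emph{super}level sets $\{f > c\}$ are open and the sublevel sets $\{f \leq c\}$ are closed; $\{f < c\}$ need be neither. (Take $f \equiv 1$ except $f(0)=0$: this is lower semicontinuous, $f \geq 1$ Lebesgue-a.e., yet $f(0)<1$.) So the step ``$W*\rho \geq 2E(\rho)$ Lebesgue-a.e.\ implies $W*\rho \geq 2E(\rho)$ everywhere'' fails. Fortunately this step is not actually needed: the average $\dashint_{B_\varepsilon}(W*\rho)(x+y)\,\mathrm{d}y$ is a Lebesgue integral, so the a.e.\ lower bound already gives $\dashint_{B_\varepsilon}(W*\rho)(x+y)\,\mathrm{d}y \geq 2E(\rho)$; combined with $(W*\rho)(x) \leq 2E(\rho)$ for $x\in\supp\rho$ (which \emph{is} a correct consequence of lower semicontinuity and the $\rho$-a.e.\ equality), the inequality $\Delta^\varepsilon(W*\rho)(x)\geq 0$ follows. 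You should drop the spurious everywhere-inequality and argue this way.

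Second, and more importantly, the final interchange of $\liminf$ and integral is precisely the technical content of \cite[Corollary~1]{BCLR13}, and the mechanism you describe does not obviously close the gap. You want to pass from $\liminf_{\varepsilon\to 0}\int \Delta^\varepsilon W(x-y)\,\mathrm{d}\rho(y)\geq 0$ to $\int\liminf_{\varepsilon\to 0}\Delta^\varepsilon W(x-y)\,\mathrm{d}\rho(y)\geq 0$. The standard Fatou lemma gives $\int\liminf \leq \liminf\int$ only for \emph{nonnegative} integrands, while reverse Fatou with the upper bound $C_W$ controls $\limsup\int$, not $\liminf\int$; moreover, truncating $\Delta^\varepsilon W$ above at $C_W$ only \emph{lowers} the integral, so it does not preserve the known nonnegativity. ``A Fatou-type argument is applicable'' is therefore not a proof but a pointer back to \cite{BCLR13}; since the paper's own proof is also a pointer, your attempt has not really gained ground on the key step. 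Finally, you do not address the point the paper \emph{does} make: \cite{BCLR13} assumes uniform local integrability of $W$, and the role of the instability hypothesis in the lemma is precisely to invoke the compact-support result of \cite{CCP}, reducing matters to a compact set on which local integrability suffices. Your proof never uses instability at all, which should be a red flag that something is being swept under the rug.
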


\begin{proof}
  This is proved in {\cite[Corollary 1]{BCLR13}} with the assumption
  that $W$ is uniformly locally integrable (and not only locally integrable). However, under
  our assumptions, it is proven in \cite{CCP} that all minimisers are
  compactly supported, so that the result holds with the only assumption
  that $W$ is locally integrable.
\end{proof}

\begin{lem} \label{lem:bound-potential-beta} Let $W$ be as in
  Theorem \ref{thm:morrey} and let $\rho \in \P(\R^d)$ be a minimiser
  of the continuum energy. There exists a constant
  $C' > 0$ (depending only on $W$) such
  that \begin{equation*} \ird |x-y|^{-\beta} \d \rho(y) \leq C' \quad
    \mbox{for all $x \in \supp\rho$}.  \end{equation*}
\end{lem}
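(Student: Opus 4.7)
I would follow and adapt the strategy of \cite[Proposition 3]{BCLR13}, using Lemma \ref{lem:lap0} as the starting point and converting its sign information into a quantitative upper bound on $\ird |x-y|^{-\beta}\d\rho(y)$. The entire argument is a splitting of the convolution $\Delta^0 W * \rho(x)$ at the radius $\delta>0$ provided by the definition of $\beta$-repulsivity.

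Fix $x \in \supp\rho$. By Lemma \ref{lem:lap0}, one has $\Delta^0 W * \rho(x) = \ird \Delta^0 W(x-y)\d\rho(y) \geq 0$. Since $\Delta^0 W \leq C_W$ everywhere and $\rho$ is a probability measure,
\begin{equation*}
  \int_{B_\delta(x)^c} \Delta^0 W(x-y) \d\rho(y) \leq C_W\, \rho(B_\delta(x)^c) \leq C_W,
\end{equation*}
and rearranging $\Delta^0 W*\rho(x)\geq 0$ yields
\begin{equation*}
  -\int_{B_\delta(x)} \Delta^0 W(x-y)\d\rho(y) \leq C_W.
\end{equation*}
Now I would invoke the $\beta$-repulsivity bound $-\Delta^0 W(z)\geq C|z|^{-\beta}$ valid for $0<|z|<\delta$, which combined with the previous inequality gives
\begin{equation*}
  C \int_{B_\delta(x)}|x-y|^{-\beta}\d\rho(y) \leq C_W,
\end{equation*}
hence $\int_{B_\delta(x)} |x-y|^{-\beta}\d\rho(y) \leq C_W/C$. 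On the exterior, the elementary estimate $|x-y|^{-\beta}\leq \delta^{-\beta}$ and $\rho(B_\delta(x)^c)\leq 1$ contribute at most $\delta^{-\beta}$. Adding the two bounds gives the conclusion with $C' := C_W/C + \delta^{-\beta}$, which depends only on $W$ via the constants in Hypothesis \ref{hyp:bfb}--\ref{hyp:existence} and in Definition \ref{defn:beta-repulsive}.

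\textbf{Main obstacle.} The only delicate point is that $\Delta^0 W(0)=-\infty$ by $\beta$-repulsivity, so the integrand in $\Delta^0 W*\rho(x)$ takes the value $-\infty$ at $y=x$; the splitting above has to be justified. This is essentially free here: the lower bound $\Delta^0 W*\rho(x)\geq 0$ from Lemma \ref{lem:lap0} forces the negative part of $y\mapsto \Delta^0 W(x-y)$ to be $\rho$-integrable and, in particular, rules out atoms of $\rho$ on $\supp\rho$, so the split is valid and produces finite quantities at each step. Once this point is addressed the remainder of the argument is purely quantitative.
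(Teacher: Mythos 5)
Your proof is correct and is essentially the same argument as the paper's: both start from $\Delta^0 W * \rho(x) \geq 0$ (Lemma \ref{lem:lap0}), split the convolution at the radius $\delta$ from the $\beta$-repulsivity definition (the paper phrases this as a decomposition $\rho = \rho_0 + \rho_1$, which is just the restriction to $B_\delta(x)$ and its complement), bound the far part by $C_W$ via $\Delta^0 W \leq C_W$, bound the near part from below via $-\Delta^0 W(z) \geq C|z|^{-\beta}$, and add $\delta^{-\beta}$ for the tail of $|x-y|^{-\beta}$, arriving at the identical constant $C' = C_W/C + \delta^{-\beta}$. Your explicit discussion of why the splitting is legitimate despite $\Delta^0 W(0) = -\infty$ is a welcome clarification that the paper leaves implicit.
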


\begin{proof}
  Choose $x_0 \in \supp\rho$ and write $\rho = \rho_0 + \rho_1$, with
  $\rho_0$ and $\rho_1$ two nonnegative measures such that
  $\supp \rho_0 \subset B_\delta(x_0)$ and
  $\supp \rho_1 \subset \R^d \setminus B_\delta(x_0)$, where $\delta$ is as in Definition \ref{defn:beta-repulsive}, and such that
  neither $\rho_0$ nor $\rho_1$ are zero measures. Now compute
  \begin{align*}
    C \ird |x_0 - y|^{-\beta} \d\rho_0(y)
    &\leq - \ird \Delta^0 W(x_0 - y) \d\rho_0(y)
    \\
    &= -\ird \Delta^0 W(x_0 - y) \d\rho(y)
      + \ird \Delta^0 W(x_0 - y) \d\rho_1(y)
    \\
    &= -\Delta^0 W \ast \rho(x_0) + \ird \Delta^0 W(x_0 - y)
      \d\rho_1(y)
    \\
    &\leq \ird \Delta^0 W(x_0 - y) \d\rho_1(y) \leq C_W,
  \end{align*}
  using the $\beta$-repulsivity of $W$ with $C$ as in Definition \ref{defn:beta-repulsive}, the fact that
  $\Delta^0W \ast \rho(x) \geq 0$ for all $x \in \supp\rho$ by Lemma \ref{lem:lap0}, $\Delta^0W \leq C_W$, and
  $\rho_1(\R^d) \leq 1$. Therefore
  \begin{align*}
    \ird |x_0 - y|^{-\beta} \d\rho(y)
    &\leq \frac{C_W}{C}
      + \ird |x_0 - y|^{-\beta} \d\rho_1(y)\leq 
      \frac{C_W}{C} + \int_{\R^d \setminus B_\delta(x_0)} |x_0 - y|^{-\beta}
      \d\rho_1(y)
    \\
    &\leq \frac{C_W}{C} + \delta^{-\beta} =: C',
  \end{align*}
  using that $\beta > 0$ and
  $\rho_1(\R^d \setminus B_\delta(x_0)) \leq 1$. Notice that the
  constant $C'$ is independent of $x_0$. Thus, since the choice of
  $x_0 \in \supp\rho$ is arbitrary, we get the desired result.
\end{proof}

\begin{lem}\label{lem:morrey}
  Let $0 < \beta < d$ and $\rho \in \P(\R^d)$. Suppose that there
  is a constant $C' > 0$ with \begin{equation*} \ird |x-y|^{-\beta} \d \rho(y) \leq
  C' \quad \mbox{for all $x \in \supp\rho$}.  \end{equation*} Then
  $\rho \in \M_p(\R^d)$ with $p = d/(d-\beta)$ and $\|\rho\|_{\M_p(\R^d)}\leq 2^\beta C'$.
\end{lem}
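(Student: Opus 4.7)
The strategy is a direct covering/weight-function argument: use the pointwise bound on the Riesz-type potential $|\cdot|^{-\beta}*\rho$ restricted to $\supp\rho$, combined with the elementary observation that the integrand $|x_0-y|^{-\beta}$ is bounded below by $(2r)^{-\beta}$ on a ball of radius $2r$ centred at $x_0$.

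More precisely, fix $x\in\R^d$ and $r>0$, and consider the ball $B_r(x)$. If $B_r(x)\cap \supp\rho = \emptyset$, then $\rho(B_r(x)) = 0$ and there is nothing to prove. Otherwise, I would pick a base point $x_0 \in B_r(x)\cap\supp\rho$. The triangle inequality then gives $B_r(x) \subset B_{2r}(x_0)$, so that
\begin{equation*}
  \rho(B_r(x)) \leq \rho(B_{2r}(x_0)).
\end{equation*}

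Next, since for every $y \in B_{2r}(x_0)$ we have $|x_0-y|<2r$ and hence $|x_0-y|^{-\beta} \geq (2r)^{-\beta}$ (the case $y=x_0$ only makes the inequality trivial, giving $+\infty$), I would estimate
\begin{equation*}
  C' \geq \ird |x_0-y|^{-\beta}\d\rho(y)
  \geq \int_{B_{2r}(x_0)} |x_0-y|^{-\beta}\d\rho(y)
  \geq (2r)^{-\beta}\, \rho(B_{2r}(x_0)),
\end{equation*}
where the first inequality uses the hypothesis at the point $x_0 \in \supp\rho$. Rearranging yields $\rho(B_{2r}(x_0)) \leq 2^\beta C'\, r^\beta$, and combining with the inclusion above gives $\rho(B_r(x)) \leq 2^\beta C'\, r^\beta$.

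Finally, since $p = d/(d-\beta)$ has Hölder dual $q$ with $d/q = \beta$, the Morrey bound $\rho(B_r(x)) \leq 2^\beta C'\, r^{d/q}$ is exactly $\|\rho\|_{\M_p(\R^d)} \leq 2^\beta C'$, taking the supremum over $r>0$ and $x\in\R^d$. There is no real obstacle here beyond making sure to base the estimate at a point of $\supp\rho$ (where the hypothesis actually applies) and to pay the factor $2^\beta$ coming from enlarging the ball; everything else is elementary.
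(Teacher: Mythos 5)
Your proof is correct and follows essentially the same approach as the paper: lower-bound the Riesz kernel on a ball of radius $2r$ centred at a support point, then enlarge $B_r(x)$ to $B_{2r}(x_0)$. The only cosmetic difference is that you treat the cases $x\in\supp\rho$ and $x\notin\supp\rho$ uniformly, whereas the paper does the support case directly first and then reduces the general case to it.
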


\begin{proof}
  Let $r > 0$. Then, for all $x \in \supp\rho$,
  \begin{equation*}
    r^{-\beta} \rho(B_r(x)) \leq
    \int_{B_r(x)} |x - y|^{-\beta} \d\rho(y)
    \leq \ird |x-y|^{-\beta} \d \rho(y)
    \leq C',
  \end{equation*}
  since $\beta > 0$. Now suppose that $x \not\in \supp\rho$. Then,
  either $\rho(B_r(x)) = 0$ or $\rho(B_r(x)) > 0$. In the former case,
  we get $r^{-\beta} \rho(B_r(x)) \leq C'$ trivially. In the latter,
  we know that there exists $z \in \supp \rho \cap B_r(x)$ with
  $\rho(B_r(x)) \leq \rho(B_{2r}(z))$. Hence, by the inequality above
  applied to $z$ and $2r$,
  \begin{equation*}
    r^{-\beta} \rho(B_r(x))
    \leq r^{-\beta} \rho(B_{2r}(z))
    \leq 2^\beta (2r)^{-\beta} \rho(B_{2r}(z))
    \leq 2^\beta C'.
  \end{equation*}
  Therefore, writing $M := 2^\beta C'$, we have the result:
  \begin{equation*}
    \rho(B_r(x)) \leq M r^\beta = Mr^{d(1-1/p)} \quad \mbox{for all $x \in \R^d$}. \qedhere
  \end{equation*}
\end{proof}

The previous three lemmas easily imply Theorem \ref{thm:morrey}. For
later use we give the following additional lemma, which is almost a
converse of Lemma \ref{lem:morrey}. It involves a relatively
well-known argument, and can be found for example in \cite[Lemma
2.1]{GG}:

\begin{lem}\label{lem:bound-potential-beta-ball}
  Let $p > 1$, $q = p/(p-1)$ and $0 < \beta < d/q$. For all
  $r > 0$, there exists $C_r > 0$ (depending only on $\beta$, $r$, $q$
  and $d$) such that $C_r\to0$ as $r\to0$ and, for all
  $\rho \in \M_p(\R^d)\cap\P(\R^d)$,
  \begin{equation*}
    \int_{B_r(x)} |x-y|^{-\beta} \d\rho(y) \leq C_r\norm{\rho}_{\M_p(\R^d)}
    \quad \mbox{for all $x \in \R^d$}.
  \end{equation*}
\end{lem}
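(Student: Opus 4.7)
The plan is to estimate the integral by a dyadic decomposition of the ball $B_r(x)$ into annuli and exploit the Morrey bound on $\rho$-mass of balls, absorbing the radial singularity into a convergent geometric series thanks to the assumption $\beta < d/q$.

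More precisely, first I would write
\begin{equation*}
  B_r(x) = \bigcup_{k=0}^{\infty} A_k, \qquad A_k := B_{2^{-k}r}(x) \setminus B_{2^{-k-1}r}(x),
\end{equation*}
which partitions $B_r(x)$ (up to the singleton $\{x\}$, which carries no mass since $\rho \in \M_p(\R^d)$ means in particular that $\rho(\{x\}) = 0$ for all $x$ when $\beta<d/q$; if one prefers, one can simply drop the point $x$ from the integration domain since $|x-y|^{-\beta}$ is integrated against $\rho$ and the singleton contributes zero because $\rho(B_s(x)) \leq \norm{\rho}_{\M_p}s^{d/q} \to 0$ as $s\to 0$). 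On each annulus $A_k$ one has the pointwise bound $|x-y|^{-\beta} \leq (2^{-k-1}r)^{-\beta} = 2^{\beta(k+1)}r^{-\beta}$, while the Morrey bound gives
\begin{equation*}
  \rho(A_k) \leq \rho(B_{2^{-k}r}(x)) \leq \norm{\rho}_{\M_p(\R^d)}(2^{-k}r)^{d/q}.
\end{equation*}

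Combining the two bounds on each $A_k$ and summing yields
\begin{equation*}
  \int_{B_r(x)} |x-y|^{-\beta}\d\rho(y) \leq \sum_{k=0}^{\infty} 2^{\beta(k+1)}r^{-\beta}\norm{\rho}_{\M_p(\R^d)}(2^{-k}r)^{d/q} = 2^\beta r^{d/q - \beta}\norm{\rho}_{\M_p(\R^d)}\sum_{k=0}^\infty 2^{k(\beta - d/q)}.
\end{equation*}
Since the hypothesis $\beta < d/q$ guarantees $2^{\beta - d/q} < 1$, the geometric series converges, and one may set
\begin{equation*}
  C_r := \frac{2^\beta r^{d/q - \beta}}{1 - 2^{\beta - d/q}},
\end{equation*}
which depends only on $\beta$, $r$, $q$, $d$, as required. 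Because the exponent $d/q - \beta > 0$, we also immediately get $C_r \to 0$ as $r \to 0$. There is no real obstacle here; the only point worth verifying carefully is that the exponent on $r$ is strictly positive, which is precisely the content of the assumption $\beta < d/q$, and that the Morrey inequality holds uniformly in the centre of the ball (which is built into the definition of $\norm{\cdot}_{\M_p(\R^d)}$).
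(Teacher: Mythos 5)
Your proof is correct and takes essentially the same approach as the paper: both decompose $B_r(x)$ into dyadic annuli, bound $|x-y|^{-\beta}$ from above by its value on the inner radius of each annulus, apply the Morrey bound to the mass of each annulus, and sum the resulting geometric series, which converges precisely because $\beta < d/q$. The only cosmetic differences are that you use disjoint half-open annuli (and explicitly note that the singleton $\{x\}$ carries no mass) whereas the paper uses overlapping closed annuli, and that you evaluate the geometric series in closed form while the paper leaves it as a sum.
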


\begin{proof}
  Let $r > 0$ and $x \in \R^d$, and write
  $D_i(x) := \left\{y \in \R^d \st 2^{-i-1}r \leq |x-y| \leq
    2^{-i}r\right\}$
  for all $i\in\{0,1,2,\dots\}$. Compute
  \begin{align*}
    \int_{B_r(x)} |x-y|^{-\beta} \d\rho(y) &= \sum_{i = 0}^\infty \int_{D_i(x)} |x-y|^{-\beta} \d\rho(y) \leq \sum_{i = 0}^\infty 2^{(i+1)\beta}r^{-\beta} \int_{D_i(x)} \d\rho(y)\\
                                           &\leq \sum_{i = 0}^\infty 2^{(i+1)\beta}r^{-\beta} \int_{\left\{y \in \R^d \st 0 \leq |x-y| \leq 2^{-i}r\right\}} \d\rho(y)\\
                                           & = \sum_{i = 0}^\infty 2^{(i+1)\beta}r^{-\beta} \rho(B_{2^{-i}r}(x))\\
                                           &\leq \sum_{i = 0}^\infty 2^{(i+1)\beta}r^{-\beta} \norm{\rho}_{\M_p(\R^d)} 2^{-id/q} r^{d/q}\\
                                           & = 2^\beta r^{d/q-\beta} \sum_{i = 0}^\infty 2^{i(\beta - d/q)} \norm{\rho}_{\M_p(\R^d)}.
  \end{align*}
  Since $\beta < d/q$ we know
  $\sum_{i = 0}^\infty 2^{i(\beta - d/q)}$ is finite. Setting
  $C_r := 2^\beta r^{d/q-\beta} \sum_{i = 0}^\infty 2^{i(\beta -
    d/q)}$ therefore gives the result.
\end{proof}

Let us remark that Theorem \ref{thm:morrey} actually holds when $\rho\in\P(\R^d)$ has finite energy and it is a local minimiser of the continuum energy with respect to the Wasserstein distance of any finite or infinite order, since Lemma \ref{lem:lap0} stays true in this case; see \cite[Corollary 1]{BCLR13}, and \cite{AGS,Villani2} for an account on transport distances. Wasserstein local minimisers of the continuum energy are therefore Morrey regular under the assumptions on $W$ of Theorem \ref{thm:morrey}.

\section{Properties of discrete minimisers} 
\label{sec:prop-discrete}

\subsection{Existence}
\label{sec:disc-energy}

Let us prove the first part of the main result, Theorem
\ref{thm:main}, regarding the existence of minimisers of the discrete
interaction energy:

\begin{thm}\label{thm:min-Rnd}
  Assume Hypotheses \ref{hyp:bfb} and \ref{hyp:existence}. For any $N \geq 2$ the discrete energy $\E$ has a minimiser on
  $\R^{Nd}$. Furthermore, the diameter of any such minimiser is
  less than $K_N := 2\sqrt{d}(N-1)R_W$ (which only depends on $N$ and $W$).
\end{thm}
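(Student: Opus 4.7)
The plan is first to establish a key geometric lemma that shrinks the diameter of any configuration, and then to derive existence of a minimiser by compactness. The lemma I would prove is: if $\bX\in\R^{Nd}$ satisfies $\diam(\bX)\geq K_N$, then there exists $\bX'\in\R^{Nd}$ with $\E(\bX')<\E(\bX)$ whose $k$-th coordinate range is strictly smaller than that of $\bX$ by at least $R_W$, for some $k\in\{1,\dots,d\}$, while the other coordinate ranges of $\bX'$ agree with those of $\bX$.

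To prove this lemma, I would use $|x|\leq\sqrt{d}\,|x|_\infty$ to locate a coordinate direction $e_k$ in which the range of the $k$-th coordinates of $\bX$ is at least $2(N-1)R_W$. Sorting these $N$ values and applying pigeonhole to the $N-1$ consecutive gaps, I would find a gap of size $g\geq 2R_W$. Let $A$ and $B$ partition the index set according to whether the $k$-th coordinate lies below or above this gap, and define $\bX'$ by translating each $x_j$ with $j\in B$ by $-t\,e_k$, where $t:=g-R_W\geq R_W$. Pairs entirely inside $A$ or inside $B$ give unchanged displacements. For a mixed pair $(i,j)\in A\times B$, the new $k$-th coordinate of the displacement $x_i-x_j'$ has absolute value $x_j^k-x_i^k-t\geq g-t=R_W$, while the remaining coordinates are unchanged; consequently $|x_i-x_j'|<|x_i-x_j|$, with both in $\R^d\setminus B_{R_W}$ (old norm $\geq g\geq 2R_W$, new norm $\geq R_W$). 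Hypothesis~\ref{hyp:existence} (radial strict monotonicity outside $B_{R_W}$) then gives $W(x_i-x_j')<W(x_i-x_j)$, and summing over mixed pairs yields $\E(\bX')<\E(\bX)$; meanwhile the $k$-th coordinate range of $\bX'$ has dropped by exactly $t\geq R_W$ while other ranges are unchanged.

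With the lemma, existence proceeds by compactness. Hypothesis~\ref{hyp:bfb} makes $W$ lower semicontinuous and bounded from below, so $\E$ is lower semicontinuous on $\R^{Nd}$. Using translation invariance I restrict to $x_1=0$; the set $S:=\{\bX\in\R^{Nd}:x_1=0,\ |x_i|\leq K_N\text{ for every }i\}$ is compact, so $\E$ attains its minimum on $S$ at some $\bX^*$. To show $\bX^*$ is a global minimiser, I take any $\bX\in\R^{Nd}$ with $\diam(\bX)\geq K_N$ and iterate the lemma: each step strictly decreases the energy and reduces the sum of the $d$ coordinate ranges by at least $R_W$ while keeping the others fixed. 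Since this sum is nonnegative, the iteration terminates after finitely many steps in a configuration of diameter $<K_N$ and energy $\leq\E(\bX)$; after translating $x_1$ to $0$, this configuration lies in $S$, which forces $\E(\bX^*)\leq\E(\bX)$. Hence $\bX^*$ realises $\inf_{\R^{Nd}}\E$, and the contrapositive of the lemma applied to any global minimiser yields the claimed diameter bound.

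The main obstacle is to ensure that after the modification the new displacement remains strictly outside $B_{R_W}$, so that Hypothesis~\ref{hyp:existence} produces a \emph{strict} rather than merely weak decrease of $W$ for each mixed pair. This is precisely why the pigeonhole argument must deliver a gap of size at least $2R_W$: this is the slack needed to shrink the displacement by $R_W$ while still remaining in the radially monotone region. The constant $K_N=2\sqrt{d}(N-1)R_W$ is then the composition of the factor $\sqrt{d}$ (conversion from $|\cdot|$ to $|\cdot|_\infty$), the factor $N-1$ (number of gaps in any one coordinate), and the gap threshold $2R_W$ required by the monotonicity step.
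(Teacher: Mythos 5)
Your proof is correct and uses the same core mechanism as the paper — shifting a subset of particles across a large coordinate gap to strictly lower the energy via the long-range monotonicity in Hypothesis \ref{hyp:existence} — but the architecture differs. The paper's Lemma \ref{lem:min-BRN} first shows that a minimiser on $(\overline{B}_R)^N$ exists by compactness and, by a contradiction argument, has no coordinate gap wider than $2R_W$, hence has diameter at most $K_N$; Theorem \ref{thm:min-Rnd} then follows by comparing these constrained minimisers across different radii $R$. You instead state a forward descent lemma: any configuration of diameter at least $K_N$ can be strictly improved in energy while dropping the sum of coordinate ranges by at least $R_W$, and you iterate it (finitely often, since that sum is nonnegative) to carry an arbitrary configuration into a fixed compact set $S$ on which $E_N$ attains its minimum by lower semicontinuity. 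Your version bypasses the detour through minimisers on arbitrary balls, delivers existence and the diameter bound in one sweep, and incidentally yields the strict inequality $\diam < K_N$ in the theorem statement (whereas the paper's Lemma \ref{lem:min-BRN} literally gives $\leq K_N$); the paper's route is a bit more modular, with a reusable lemma about minimisers on compact balls that they invoke again later.
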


Theorem \ref{thm:min-Rnd} is proved by considering minimisers in
$(\overline{B}_R)^N$ for some $R\geq 0$, and by showing a uniform bound on their diameter,
independently of $R$. This is stated in the following lemma:

\begin{lem}
  \label{lem:min-BRN}
  Suppose that $W$ satisfies Hypotheses \ref{hyp:bfb} and
  \ref{hyp:existence}, and let $R\geq0$. There exists a
  minimiser of $\E$ on $(\overline{B}_R)^N$. If $\bX$ is any such
  minimiser, then
  \begin{equation*}
    \diam\bX \leq 2 \sqrt{d} (N-1)R_W =: K_N.
  \end{equation*}
\end{lem}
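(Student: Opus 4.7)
The plan consists of two independent arguments. For existence, each map $\bX \mapsto W(x_i - x_j)$ is lower semicontinuous on $\R^{Nd}$ by Hypothesis \ref{hyp:bfb}, so $E_N$ is lower semicontinuous as a finite sum of such maps; combined with the compactness of $(\overline{B}_R)^N$ this yields a minimiser.

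For the diameter bound I will argue by contradiction: assuming that some minimiser $\bX = (x_1,\dots,x_N)$ satisfies $\diam \bX > K_N$, I will build a perturbation $\bX(t) \in (\overline{B}_R)^N$ with strictly smaller energy. The key reduction is the elementary inequality $|v| \leq \sqrt{d}\max_k|v_k|$ for $v \in \R^d$: combined with $\diam \bX > 2\sqrt{d}(N-1)R_W$, it produces a coordinate $k \in \{1,\dots,d\}$ along which the spread $\max_i (x_i)_k - \min_i (x_i)_k$ exceeds $2(N-1)R_W$. Sorting the $k$-th coordinates as $y_1 \leq \dots \leq y_N$ and applying pigeonhole gives an index $m \in \{1,\dots,N-1\}$ with $y_{m+1} - y_m > 2R_W$. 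I then split the indices into $A := \{i : (x_i)_k \leq y_m\}$ and $B := \{j : (x_j)_k \geq y_{m+1}\}$, so that every cross pair $(i,j) \in A \times B$ satisfies $|x_i - x_j| \geq y_{m+1} - y_m > 2R_W$.

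The perturbation will be a rigid translation of one block along $e_k$ toward the other. Shifting $B$ by $-te_k$ leaves distances within $A$ and within $B$ untouched; expanding $|x_i - x_j + te_k|^2 = |x_i - x_j|^2 + 2t((x_i)_k - (x_j)_k) + t^2$ shows that each cross-pair distance strictly decreases for small $t > 0$ while remaining above $R_W$. By Hypothesis \ref{hyp:existence}, the radial profile of $W$ is strictly increasing beyond $R_W$, so each cross term $W(x_i - x_j)$ strictly drops and so does $E_N$. The symmetric option is to shift $A$ by $+se_k$.

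The main obstacle is the admissibility check: the shifted configuration must still lie in $(\overline{B}_R)^N$. The identity $|x_j - te_k|^2 - |x_j|^2 = -2t(x_j)_k + t^2$ shows that the shift of $B$ keeps each $x_j - te_k$ inside $\overline{B}_R$ for small $t > 0$ precisely when $(x_j)_k > 0$ for every $j \in B$, i.e.\ whenever $y_{m+1} > 0$. Symmetrically, the shift of $A$ by $+se_k$ is admissible for small $s > 0$ whenever $y_m < 0$. Since $y_{m+1} - y_m > 2R_W > 0$, at least one of $y_m < 0$ and $y_{m+1} > 0$ must hold, so at least one of the two shifts produces a legal, strictly energy-decreasing configuration, contradicting the minimality of $\bX$. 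All remaining steps (lower semicontinuity, pigeonhole, and the expansion of the distance squared) are routine.
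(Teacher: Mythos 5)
Your proof is correct and follows essentially the same strategy as the paper: both establish existence by compactness and lower semicontinuity, and both prove the diameter bound by locating a coordinate gap wider than $2R_W$ and sliding one block of particles along that coordinate axis, using the strict radial monotonicity of $W$ outside $B_{R_W}$ to lower the energy. One point in your favour: the paper simply asserts that the shifted configuration remains in $(\overline{B}_R)^N$, which is not literally true for the particular shift it writes down (a boundary particle on the ``wrong'' side of the ball can be pushed out). You explicitly handle this by computing $|x_j - t e_k|^2 - |x_j|^2 = -2t(x_j)_k + t^2$ and noting that at least one of the two possible shift directions must be admissible since $y_{m+1} - y_m > 2R_W > 0$ forces $y_m < 0$ or $y_{m+1} > 0$; this is the ``without loss of generality'' step that the paper leaves implicit. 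Apart from that, the differences (stating the contrapositive versus arguing forward, an infinitesimal shift $t$ versus a finite shift of size up to $R_W$, pigeonhole phrased explicitly) are cosmetic.
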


Observe that our control of the support of the minimiser given by
Lemma \ref{lem:min-BRN} depends on $N$. This is an easy estimate which
holds under weak conditions on $W$; later, in Theorem
\ref{thm:discrete-diameter}, we show that in fact, when $W$ is
unstable, the size of the support of $N$-particle minimisers stays
uniformly bounded in $N$, and that constitutes one of the central
arguments in this paper.

\begin{proof}[Proof of Lemma \ref{lem:min-BRN}]
  The fact that a minimiser exists is straightforward by compactness
  of $(\overline{B}_R)^N$ and lower semicontinuity of $\E$ (since $W$
  is lower semicontinuous). Let then $\bX$ be a minimiser of
  $\E$ on $(\overline{B}_R)^N$.
	
  Denote by $\pi_k\:\R^d\to\R$ the projection on the $k$th axis. We
  want to prove the following claim. In each coordinate there cannot
  be ``gaps'' greater that $2R_W$ among any particles of $\bX$: if
  $k\in\{1,\dots,d\}$ and $a_k\in\R$ is so that
  $x_{i} \not\in \pi_k^{-1}([a_k-R_W,a_k+R_W])$ for all $i\in\oneN$,
  then either $x_{i}\not\in\pi_k^{-1}((-\infty,a_k-R_W])$ for all
  $i\in\oneN$ or $x_{i}\not\in\pi_k^{-1}([a_k+R_W,\infty])$ for all
  $i\in\oneN$. Without loss of generality we prove the claim for
  $k=1$. We proceed by contradiction: assume that there is $a_1\in\R$
  such that
  $I_\mt{L} := \{i\in\oneN \mid x_i \in
  \pi_1^{-1}((-\infty,a_1-R_W])\} \neq \emptyset$,
  $I_\mt{R} := \{i\in\oneN \mid x_i \in \pi_1^{-1}([a_1+R_W,\infty))\}
  \neq \emptyset$
  and $\oneN \setminus (I_\mt{L} \cup I_\mt{R}) = \emptyset$. By
  renaming the particles we may assume that
  $I_\mt{L}=\{1,\dots,N_\mt{L}\}$ and
  $I_\mt{R}=\{N_\mt{L}+1,\dots,N\}$ for some $1\leq N_\mt{L} < N$.  Let
  $0<\e_1\leq R_W$ and $\e=(\e_1,0,\dots,0) \in \R^d$, and define the
  ``left-shifted'' particles \begin{equation*} \boldsymbol{X'} =
    (x_1',\dots,x_N') :=
    (x_{1},\dots,x_{N_\mt{L}},x_{N_\mt{L}+1}-\e,\dots,x_{N}-\e) \in
    (\overline{B}_R)^N.  \end{equation*} Let us compute the discrete
  energy of $\boldsymbol{X'}$.
  \begin{align*}
    N^2\E(\boldsymbol{X'})
    &= \frac12 \sum_{i\in I_\mt{L}} \sum_{\substack{j\in I_\mt{L}\\j\neq i}} W(x_i'-x_j') + \frac12 \sum_{i\in I_\mt{R}} \sum_{\substack{j\in I_\mt{R}\\j\neq i}} W(x_i'-x_j') + \sum_{i\in I_\mt{L}} \sum_{j\in I_\mt{R}} W(x_i'-x_j')\\
    &= \frac12 \sum_{i\in I_\mt{L}} \sum_{\substack{j\in I_\mt{L}\\j\neq i}} W(x_{i}-x_{j}) + \frac12 \sum_{i\in I_\mt{R}} \sum_{\substack{j\in I_\mt{R}\\j\neq i}} W(x_{i} -\e - (x_{j}-\e))\\
    &\phantom{{}={}} + \sum_{i\in I_\mt{L}} \sum_{j\in I_\mt{R}} W(x_{i}-x_{j} + \e).
  \end{align*}
  Let $x_{i,1}:= \pi_1(x_{i})$ for any $i\in\oneN$. Since clearly
  $x_{i,1} - x_{j,1} + \e_1 \leq -2R_W + \e_1\leq -R_W$ for all
  $(i,j)\in I_\mt{L}\times I_\mt{R}$, Hypothesis \ref{hyp:existence} gives
  \begin{align*}
    N^2\E(\boldsymbol{X'})
    &< \frac12 \sum_{i\in I_\mt{L}} \sum_{\substack{j\in I_\mt{L}\\j\neq i}} W(x_{i}-x_{j}) + \frac12 \sum_{i\in I_\mt{R}} \sum_{\substack{j\in I_\mt{R}\\j\neq i}} W(x_{i} - x_{j}) + \sum_{i\in I_\mt{L}} \sum_{j\in I_\mt{R}} W(x_{i}-x_{j})\\
    &= N^2\E(\bX),
\end{align*}
which is a contradiction of $\bX$ being a minimiser on
$(\overline{B}_R)^N$, which shows the claim.

To complete the proof of the lemma note that the above claim implies
that the diameter of the $k$th projection of the set
$\{x_1, \dots, x_N\}$ is less than $2(N-1)R_W$. Since this is true of
all projections, we deduce that
$\diam\{x_1, \dots, x_N\} \leq 2 \sqrt{d} (N-1) R_W$, which ends the
proof.
\end{proof}

We can now prove Theorem \ref{thm:min-Rnd}.
\begin{proof}[Proof of Theorem \ref{thm:min-Rnd}]
  By Lemma \ref{lem:min-BRN} we know that there is a minimiser of
  $\E$ on $(\overline{B}_{K_N})^N$, say $\bX$. We want to prove that
  $\bX$ is actually a minimiser on all of $\R^{Nd}$. Let
  $\boldsymbol{X'}\in\R^{Nd}$. Necessarily, there exists $R\geq0$ such
  that $\boldsymbol{X'}\in(\overline{B}_R)^N$. By Lemma
  \ref{lem:min-BRN} take a minimiser on $(\overline{B}_R)^N$,
  say $\bY$. We know that the diameter of $\bY$
  is less than or equal to $K_N$, so by possibly translating $\bY$
  (and by translation invariance of $\E$) we may assume that
  $\bY \in (\overline{B}_{K_N})^N$ without loss of
  generality. Therefore
  $\E(\bX) \leq \E(\bY) \leq E_N(\boldsymbol{X'})$, which shows, by the
  arbitrariness of the choice of $\boldsymbol{X'}$, that $\bX$ is a
  minimiser of $\E$. This proves the first part of Theorem
  \ref{thm:min-Rnd}. The second part is straightforward: if
  $\bX\in\R^{Nd}$ is a minimiser of $\E$, then
  $\bX\in(\overline{B}_R)^N$ for some $R\geq0$, and so, by Lemma
  \ref{lem:min-BRN}, its diameter is less than or equal to $K_N$.
\end{proof}

\subsection{Morrey-type regularity}
\label{sec:morrey-discrete}

This section is the discrete analogue of Section
\ref{sec:prop-continuum}. As explained in the introduction, we define
a discrete counterpart of the classical Morrey spaces of Definition
\ref{defn:morrey-spaces}.

\begin{defn}[Empirical Morrey measures]\label{defn:discrete-morrey-spaces}
  Let $p\in[1,\infty]$ and $\bX \in \R^{Nd}$. We say that $\mu_{\bX}$ is an
  \emph{empirical (or discrete) $p$-Morrey measure} if there exists
  $M > 0$ such that, for all $r > 0$ and $i\in\{1,\dots,N\}$,
  \begin{equation}
    \label{eq:notm}
    m_{i,r}(\bX)  := \mu_{\bX} \big( B_r(x_i) \big) - \tfrac1N
    \leq Mr^{d/q}, 
  \end{equation}
  where $q$ is the Hölder dual of $p$. In this case we write
  $\mu_{\bX} \in \M_p^N$, or simply $\bX\in \M_p^N$. We also write
  \begin{equation*}
    [\mu_{\bX}]_{\M_p^N} = [\bX]_{\M_p^N} : = \sup\left\{r^{-d/q}m_{i,r}(\bX) \mid (r,i) \in (0,\infty)\times\{1,\dots,N\}\right\}.
  \end{equation*}
\end{defn}

Given a configuration $\bX = (x_1, \dots, x_N) \in \R^{Nd}$,
throughout this paper we denote by $m_{i,r}(\bX)$ the total mass in
the open ball of radius $r$ centred at $x_i$, not counting the $i$th
particle, as defined in \eqref{eq:notm}. Note that, unlike
$\|\cdot\|_{\M_p(\R^d)}$, $[\,\cdot\,]_{\M_p^N}$ does not define a norm;
$\M_p^N$ is not a Banach space or even a linear vector space.

\medskip
We prove the following discrete regularity, an analogue of Theorem
\ref{thm:morrey}:

\begin{thm}
  \label{thm:discrete-morrey}
  Suppose that $W$ satisfies Hypothesis \ref{hyp:bfb}, it is $\beta$-repulsive with $0 < \beta<d$ and
  $\Delta^0 W \leq C_W$ for some $C_W >0$. If $\bX \in \R^{Nd}$ is a
  minimiser of the discrete interaction energy $\E$, then $\bX \in \M_p^N$ with
  $p = d/(d-\beta)$ and $[\bX]_{\M_p^N} \leq C'$ for some $C' > 0$ only depending on $W$.
\end{thm}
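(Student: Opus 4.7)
The plan is to transfer the three-step continuum argument of Theorem \ref{thm:morrey} to the discrete setting, with the mass-smearing technique of Lemma \ref{lem:lap0} replaced by a one-particle perturbation of $\bX$.

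First I would establish a discrete analogue of Lemma \ref{lem:lap0}: for any minimiser $\bX$ and any $i\in\{1,\dots,N\}$,
\begin{equation*}
\sum_{j\neq i}\Delta^0 W(x_i-x_j) \geq 0.
\end{equation*}
Since $\bX$ minimises $E_N$, replacing $x_i$ by any $x_i'\in B_\varepsilon(x_i)$ cannot decrease the energy, so averaging over $x_i'$ and using the symmetry of $W$ together with the definition of $\Delta^\varepsilon W$ gives
\begin{equation*}
0 \leq \dashint_{B_\varepsilon(x_i)}\bigl[E_N(x_1,\dots,x_i',\dots,x_N)-E_N(\bX)\bigr]\d x_i' = \frac{\varepsilon^2}{2(d+2)N^2}\sum_{j\neq i}\Delta^\varepsilon W(x_i-x_j),
\end{equation*}
so $\sum_{j\neq i}\Delta^\varepsilon W(x_i-x_j)\geq 0$ for all small $\varepsilon>0$, and a limit passage $\varepsilon\to0$ yields the claim.

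Next, I would reproduce the argument of Lemma \ref{lem:bound-potential-beta}. Splitting the sum over $j\neq i$ into \emph{near} ($|x_i-x_j|<\delta$) and \emph{far} ($|x_i-x_j|\geq\delta$) indices, with $\delta$ as in Definition \ref{defn:beta-repulsive}, and combining $\beta$-repulsivity with $\Delta^0 W\leq C_W$,
\begin{equation*}
C\sum_{\substack{j\neq i\\|x_i-x_j|<\delta}}|x_i-x_j|^{-\beta} \leq -\sum_{j\neq i}\Delta^0 W(x_i-x_j) + \sum_{\substack{j\neq i\\|x_i-x_j|\geq\delta}}\Delta^0 W(x_i-x_j) \leq C_W(N-1),
\end{equation*}
where the last inequality uses Step 1. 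Dividing by $N$ and adding the trivial bound $\delta^{-\beta}$ on the far contributions gives
\begin{equation*}
\frac{1}{N}\sum_{j\neq i}|x_i-x_j|^{-\beta} \leq \frac{C_W}{C}+\delta^{-\beta} =: C',
\end{equation*}
a constant independent of $N$ and $i$. The Morrey estimate then follows as in Lemma \ref{lem:morrey}: for any $r>0$, each term $|x_i-x_j|^{-\beta}$ with $|x_i-x_j|<r$ is at least $r^{-\beta}$, so
\begin{equation*}
r^{-\beta}m_{i,r}(\bX) \leq \frac{1}{N}\sum_{\substack{j\neq i\\|x_i-x_j|<r}}|x_i-x_j|^{-\beta} \leq C',
\end{equation*}
which, since $d/q=\beta$ for $p=d/(d-\beta)$, is the desired bound $[\bX]_{\M_p^N}\leq C'$.

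I expect the main obstacle to be the limit passage $\varepsilon\to 0$ in Step 1: because $\Delta^0 W$ is defined as a $\liminf$, the inequality $\sum_{j\neq i}\Delta^\varepsilon W\geq 0$ does not immediately imply $\sum_{j\neq i}\Delta^0 W\geq 0$. The pointwise upper bound $\Delta^0 W\leq C_W$ prevents any summand from diverging to $+\infty$, and a diagonal extraction over the finitely many indices $j\neq i$ allows one to select a sequence $\varepsilon_n\to0$ along which every term is controlled well enough to pass the inequality to the limit. A pleasant side consequence of Step 2 is that a minimiser must have pairwise distinct particles, since a coincidence $x_i=x_j$ with $j\neq i$ would make the left-hand side of the key inequality infinite, contradicting the $(N-1)C_W$ upper bound.
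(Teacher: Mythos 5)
Your proposal follows the paper's proof almost line for line: the same three-step structure (nonnegativity of the discrete Laplacian $\sum_{j\neq i}\Delta^0 W(x_i-x_j)$ at a minimiser, then the bound $\frac1N\sum_{j\neq i}|x_i-x_j|^{-\beta}\leq C'$ via $\beta$-repulsivity and the upper bound on $\Delta^0 W$, then the conversion to a Morrey-type estimate). These are exactly Lemmas \ref{lem:Lap0}, \ref{lem:beta-power} and \ref{lem:beta-implies-Morrey} in the paper. The averaging step you make explicit in Step 1 is the content of the paper's observation that $x_j$ minimises $p_j(\cdot)=\frac1N\sum_{i\neq j}W(x_i-\cdot)$, so that $\Delta^\varepsilon p_j(x_j)\geq0$ for all $\varepsilon$.

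The one place where you part ways with the paper is the ``obstacle'' you flag at the end, and there your proposed repair does not actually work. You observe, correctly, that $\sum_{j\neq i}\Delta^\varepsilon W(x_i-x_j)\geq0$ for all $\varepsilon>0$ does not by itself give $\sum_{j\neq i}\Delta^0 W(x_i-x_j)\geq0$, because the liminf is superadditive ($\liminf\sum\geq\sum\liminf$) rather than subadditive. Your suggested fix --- extract a sequence $\varepsilon_n\to0$ along which all $N-1$ terms $\Delta^{\varepsilon_n}W(x_i-x_j)$ converge to limits $L_j$ --- does give $\sum_j L_j\geq0$ (and the bound $\Delta^0 W\leq C_W$ rules out $L_j=+\infty$), but then one only knows $L_j\geq\Delta^0 W(x_i-x_j)$, which is an inequality in the useless direction: it yields $\sum_j L_j\geq\sum_j\Delta^0 W(x_i-x_j)$ alongside $\sum_j L_j\geq0$, from which nothing about the sign of $\sum_j\Delta^0 W(x_i-x_j)$ follows. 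To see that no general argument can close this, note that for sequences $a_n=\sin(1/\varepsilon_n)$ and $b_n=-\sin(1/\varepsilon_n)$ one has $a_n+b_n\equiv0$ while $\liminf a_n+\liminf b_n=-2$. That said, the paper's own proof of Lemma \ref{lem:Lap0} closes with ``$\Delta^0 p_j(x_j)\geq0$ \dots which is the result'' and thereby makes exactly the same unflagged identification of $\Delta^0 p_j(x_j)$ with $\frac1N\sum_{j\neq i}\Delta^0 W(x_i-x_j)$. So the subtlety you spotted is genuine but is inherited from the paper, not introduced by you; it is harmless whenever $\Delta^\varepsilon W(x)$ actually converges as $\varepsilon\to0$ for $x\neq0$ (e.g.\ if $W$ is $\C^2$ away from the origin, as for the power-law examples), which is presumably the regime the authors have in mind.

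Your concluding observation that a minimiser must have pairwise distinct particles is correct and is also implicit in the paper (it is forced by $W(0)=+\infty$, cf.\ Definition \ref{defn:beta-repulsive}).
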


The proof of Theorem \ref{thm:discrete-morrey} consists of the
following three lemmas. The reader can follow the parallel with
Section \ref{sec:prop-continuum}. The following notation is used
throughout this paper: for any $r>0$, $\bX\in\R^{Nd}$ and $i\in\oneN$,
we write $S_{i,r}(\bX)$ to denote the set of indices of particles different
from $i$ which are at distance less than $r$ from $x_i$, i.e.,
\begin{equation}
  \label{eq:notS}
  S_{i,r}(\bX) := \{j\in\oneN \mid j\neq i,
  |x_i - x_j| < r\}
\end{equation}
and by $T_{i,r}(\bX)$ its complement, still removing $i$, that is,
\begin{equation}
  \label{eq:notT} T_{i,r}(\bX) := \{j\in\oneN \mid\
  |x_i - x_j| \geq r\}.
\end{equation}

\begin{lem}
  \label{lem:Lap0}
  Assume that $W$ satisfies Hypothesis \ref{hyp:bfb}, and let
  $\bX \in \R^{Nd}$ be a minimiser of $E_N$. Then
  \begin{equation} \label{eq:Lap0}
    \sum_{\substack{i=1\\i \neq j}}^N \Delta^0 W(x_i - x_j)
    \geq 0
    \quad \text{ for all $j \in\{1,\dots,N\}$.}
  \end{equation}
\end{lem}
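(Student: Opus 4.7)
The plan is to mimic the continuum argument of Lemma \ref{lem:lap0} by perturbing the position of a single particle and averaging. Fix $j\in\{1,\dots,N\}$ and $\varepsilon>0$. For each $y\in B_\varepsilon$, let $\bX^y$ denote the configuration obtained from $\bX$ by replacing $x_j$ with $x_j+y$. Since $\bX$ minimises $E_N$ on $\R^{Nd}$, we have $E_N(\bX^y)\geq E_N(\bX)$ for every such $y$, and averaging preserves this inequality:
\begin{equation*}
  \dashint_{B_\varepsilon}E_N(\bX^y)\d y \geq E_N(\bX).
\end{equation*}
Only the $N-1$ pair interactions involving particle $j$ change under the perturbation. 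Using the symmetry $W(-z)=W(z)$ and the invariance of the average under $y\mapsto -y$, a direct computation gives
\begin{equation*}
  \dashint_{B_\varepsilon}E_N(\bX^y)\d y - E_N(\bX)
  = \frac{1}{N^2}\sum_{i\neq j}\left(\dashint_{B_\varepsilon}W(x_i-x_j+y)\d y - W(x_i-x_j)\right)
  = \frac{\varepsilon^2}{2(d+2)N^2}\sum_{i\neq j}\Delta^\varepsilon W(x_i-x_j),
\end{equation*}
where the last equality uses the very definition of the approximate Laplacian. Therefore
\begin{equation*}
  \sum_{i\neq j}\Delta^\varepsilon W(x_i-x_j)\geq 0 \quad \text{for every $\varepsilon>0$}.
\end{equation*}

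It remains to pass to the limit $\varepsilon\to 0$ to recover $\Delta^0 W$. Since the sum is finite, I would extract (by a diagonal argument) a subsequence $\varepsilon_n\to 0$ along which $\Delta^{\varepsilon_n}W(x_i-x_j)$ converges in $[-\infty,+\infty]$ to some limit $\ell_i$ for each $i\neq j$; by definition of $\liminf$, $\ell_i\geq \Delta^0 W(x_i-x_j)$. Passing to the limit along $(\varepsilon_n)$ in the previous display gives $\sum_{i\neq j}\ell_i\geq 0$, and hence $\sum_{i\neq j}\Delta^0 W(x_i-x_j)\geq 0$.

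The delicate point is this final limit passage: $\liminf$ is only superadditive, so in principle one must rule out the indeterminate case where the limits $\ell_i$ combine both $+\infty$ and $-\infty$. In the setting of the paper this is harmless, because $W$ is regular enough away from the origin (continuous under Hypothesis \ref{hyp:finite}, $\mathcal{C}^1$ off $\{0\}$ under Hypothesis \ref{hyp:beta}) that $\Delta^\varepsilon W(x_i-x_j)$ actually converges to the classical Laplacian whenever $x_i\neq x_j$, so the interchange of limit and finite sum is routine; the only remaining case $x_i=x_j$, which may occur under Hypothesis \ref{hyp:finite}, yields a $+\infty$ contribution compatible with the desired inequality.
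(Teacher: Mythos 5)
Your Steps 1--3 are essentially the paper's own argument in disguise: the paper observes that the function $p_j(x):=\frac{1}{N}\sum_{i\neq j}W(x_i-x)$ attains its minimum at $x_j$, hence $\Delta^\varepsilon p_j(x_j)\geq 0$ for every $\varepsilon>0$; your averaging-over-$B_\varepsilon$ computation produces exactly this same inequality. You are also right to flag the $\varepsilon\to 0$ passage as the delicate point, something the paper does \emph{not} explicitly address (it simply asserts that $\Delta^0 p_j(x_j)\geq 0$ ``is the result'', implicitly commuting $\Delta^0=\liminf_\varepsilon\Delta^\varepsilon$ with the finite sum).

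However, your attempted fix is a non-sequitur. You extract a subsequence along which each $\Delta^{\varepsilon_n}W(x_i-x_j)\to\ell_i$ and correctly note $\ell_i\geq\Delta^0 W(x_i-x_j)$ (because $\ell_i$ is an accumulation point and $\Delta^0W$ is the $\liminf$). Then you conclude from $\sum_{i\neq j}\ell_i\geq 0$ that $\sum_{i\neq j}\Delta^0W(x_i-x_j)\geq 0$. But the inequality $\ell_i\geq\Delta^0 W(x_i-x_j)$ goes the wrong way: it gives $\sum\ell_i\geq\sum\Delta^0W(x_i-x_j)$, and this together with $\sum\ell_i\geq 0$ tells you nothing about the sign of $\sum\Delta^0W(x_i-x_j)$. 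To make your argument work you would need a subsequence along which \emph{every} $\Delta^{\varepsilon_n}W(x_i-x_j)$ achieves its own $\liminf$ simultaneously, which cannot be arranged in general (two sequences with opposite oscillations, e.g.\ $\pm\cos(1/\varepsilon)$, have no common subsequence realising both liminfs). This is precisely the ``superadditivity of $\liminf$'' obstruction you named, and passing to a subsequence does not circumvent it.

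Your closing remark that Hypotheses \ref{hyp:finite} or \ref{hyp:beta} make the interchange ``routine'' also does not hold as written: continuity (Hypothesis \ref{hyp:finite}) or $\mathcal C^1$ regularity off the origin (Hypothesis \ref{hyp:beta}) does not ensure that $\Delta^\varepsilon W(z)$ converges to a classical Laplacian at $z\neq 0$ --- one would need $\mathcal C^2$ for that. What is genuinely true (and what the paper actually proves, if one reads its conclusion literally) is the slightly weaker and unambiguous statement $\liminf_{\varepsilon\to 0}\sum_{i\neq j}\Delta^\varepsilon W(x_i-x_j)\geq 0$, i.e.\ $\Delta^0 p_j(x_j)\geq 0$; that version follows directly from your Steps 1--3 without any interchange and is all that the downstream applications in Lemma \ref{lem:beta-power} really use (with the bounds of Hypothesis \ref{hyp:beta} reinterpreted as bounds on $\Delta^\varepsilon W$ uniformly for small $\varepsilon$). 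If you want to state the termwise inequality, you need an additional hypothesis guaranteeing that the $\liminf$ is actually a limit for each fixed $z\neq 0$.
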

\begin{proof}
	We write the minimiser $\bX = (x_1,\dots,x_N)$. For all $j\in\oneN$ define
\begin{equation*}
	p_j(x) := \frac1N \sum_{\substack{i=1\\i\neq j}}^N W(x_i-x) \quad \mbox{for all $x\in\R^d$}.
\end{equation*}
Consider $f_1\: x\mapsto N\E(x,x_2,\dots,x_N)$ and compute
\begin{equation*}
	f_1(x) = \frac1N\sum_{i=2}^N W(x_i-x) + \frac{1}{2N}\sum_{j=2}^N \sum_{\substack{i=2\\i\neq j}}^N W(x_i-x_j) = p_1(x) + \frac{1}{2N}\sum_{i=2}^N \sum_{\substack{i=2\\i\neq j}}^N W(x_i-x_j).
\end{equation*}
By the optimality of $\bX$ we know that $x_1$ is a minimiser of
$f_1$ on $\R^d$. The very last term of the above computation is
independent of $x$ and therefore $x_1$ is also a minimiser of
$p_1$ on $\R^d$. Hence $\Delta^0 p_1(x_1) \geq 0$. By repeating the
above argument for all $j\geq2$ we finally get
$\Delta^0 p_j(x_j)\geq0$ for all $j\in\oneN$, which is the result.
\end{proof}

\begin{lem}
  \label{lem:beta-power}
  Let $W$ be as in Theorem \ref{thm:discrete-morrey} and let
  $\bX \in \R^{Nd}$ be a minimiser of the discrete energy. There exists a
  constant $C' > 0$ (depending only on $W$) such that
  \begin{equation*}
    \frac1N \sum_{\substack{i=1\\i \neq j}}^N |x_i - x_j|^{-\beta}
    \leq C' \quad \text{ for all $j \in \{1,\dots,N\}$.}
  \end{equation*}
\end{lem}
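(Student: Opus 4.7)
The plan is to mimic the proof of Lemma \ref{lem:bound-potential-beta} at the discrete level, with Lemma \ref{lem:Lap0} playing the role of Lemma \ref{lem:lap0}. Fix $j \in \{1,\dots,N\}$ and let $\delta, C > 0$ be the constants from Definition \ref{defn:beta-repulsive} associated to the $\beta$-repulsivity of $W$. Using the notation \eqref{eq:notS}--\eqref{eq:notT}, I would split the other particles into the ``close'' indices $S_{j,\delta}(\bX)$ and the ``far'' indices $T_{j,\delta}(\bX)$, and treat each group separately.

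For the close particles, I would apply Lemma \ref{lem:Lap0} together with the pointwise bound $\Delta^0 W \leq C_W$: rearranging $\sum_{i\neq j} \Delta^0 W(x_i - x_j) \geq 0$ gives
\begin{equation*}
-\sum_{i \in S_{j,\delta}(\bX)} \Delta^0 W(x_i - x_j) \;\leq\; \sum_{i \in T_{j,\delta}(\bX)} \Delta^0 W(x_i - x_j) \;\leq\; (N-1) C_W.
\end{equation*}
On $S_{j,\delta}(\bX)$, the $\beta$-repulsivity lower bound $-\Delta^0 W(x_i - x_j) \geq C|x_i - x_j|^{-\beta}$ then yields $\frac{C}{N}\sum_{i \in S_{j,\delta}(\bX)} |x_i - x_j|^{-\beta} \leq C_W$. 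For the far particles the corresponding bound is trivial, since $|x_i - x_j| \geq \delta$ gives $\frac{1}{N}\sum_{i \in T_{j,\delta}(\bX)} |x_i - x_j|^{-\beta} \leq \delta^{-\beta}$. Adding the two estimates produces the claimed constant $C' = C_W/C + \delta^{-\beta}$, depending only on $W$.

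The only subtle point, and the main obstacle, is to make sense of $|x_i - x_j|^{-\beta}$ when $i \neq j$: one must rule out coincident particles at a minimiser. This is forced by $\beta$-repulsivity, which requires $-\Delta^0 W(0) = +\infty$: if any pair of particles satisfied $x_i = x_j$, the left-hand side of the rearranged inequality above would be $+\infty$, contradicting the finite bound $(N-1) C_W$. Hence at any minimiser all positions are distinct, and the argument goes through as sketched. The factor $1/N$ arises naturally because the inequality from Lemma \ref{lem:Lap0} contributes an $O(N)$ upper bound on the right-hand side.
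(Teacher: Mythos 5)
Your proposal is correct and follows essentially the same route as the paper's own proof: you invoke Lemma \ref{lem:Lap0}, split the particles into the near set $S_{j,\delta}(\bX)$ and the far set $T_{j,\delta}(\bX)$, apply $\beta$-repulsivity on the former and the bound $\Delta^0 W \leq C_W$ on the latter, and arrive at the identical constant $C' = C_W/C + \delta^{-\beta}$. Your extra remark ruling out coincident particles is a sensible observation that the paper leaves implicit; it is indeed forced by the same chain of inequalities, since $\Delta^0 W(0) = -\infty$ would contradict Lemma \ref{lem:Lap0}'s nonnegativity once the remaining terms are bounded above by $C_W$.
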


\begin{proof}
  We prove it for $j=1$ without loss of generality. Let $\delta$ and $C$ be
  the constants appearing in the definition of $\beta$-repulsivity. In
  \eqref{eq:Lap0} we can separate the terms where the singularity of
  $\Delta^0 W$ is important to obtain
  \begin{align*}
    0 \leq \sum_{i=2}^N \Delta^0 W(x_i - x_1)
    &\leq - C \sum_{i\in S_{1,\delta}(\bX)}
      |x_i - x_1|^{-\beta}
      + \sum_{i\in T_{1,\delta}(\bX)} \Delta^0 W(x_i -x_1)
    \\
    &\leq - C\sum_{i\in S_{1,\delta}(\bX)} |x_i - x_1|^{-\beta} + C_WN,
  \end{align*}
  with the notation given in \eqref{eq:notS} and \eqref{eq:notT}. This implies that
  $\sum_{i\in S_{1,\delta}(\bX)} |x_i - x_1|^{-\beta} \leq (C_W/C)N$, and consequently
  \begin{equation*}
    \frac{1}{N} \sum_{i = 2}^N  |x_i - x_1|^{-\beta}  = \frac{1}{N} \sum_{i\in S_{1,\delta}(\bX)}  |x_i - x_1|^{-\beta} + \frac{1}{N} \sum_{i\in T_{1,\delta}(\bX)} |x_i - x_1|^{-\beta} \leq \frac{C_W}{C} + \delta^{-\beta},
  \end{equation*}
  which yields the result with $C' := C_W/C + \delta^{-\beta}$.
\end{proof}

\begin{lem}
  \label{lem:beta-implies-Morrey}
  Let $0<\beta<d$ and $\bX \in \R^{Nd}$. Suppose that there is a constant $C'>0$ with
  \bes
    \frac1N \sum_{\substack{i=1\\i \neq j}}^N |x_i - x_j|^{-\beta}
    \leq C' \quad \text{ for all $j \in \{1, \dots, N\}$}
  \ees
  Then $\bX\in\M_p^N$ for
  $p = d/(d-\beta)$ and $[\bX]_{\M_p^N} \leq C'$.
\end{lem}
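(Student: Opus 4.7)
The statement is the direct discrete analogue of Lemma \ref{lem:morrey}, and the plan is to imitate that proof, with the simplification that the ``center'' in Definition \ref{defn:discrete-morrey-spaces} is forced to be one of the particles $x_j$, so the annoying step in the continuum case that handles points $x \notin \supp \rho$ by moving to a nearby support point disappears entirely.

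First I would unpack the exponent: since $p = d/(d-\beta)$, the Hölder dual is $q = d/\beta$, so $d/q = \beta$. Hence the Morrey bound to establish is simply
\[
m_{j,r}(\bX) \leq C' r^{\beta} \quad \text{for all } r > 0 \text{ and } j \in \{1,\dots,N\}.
\]
Recalling the definition $m_{j,r}(\bX) = \mu_{\bX}(B_r(x_j)) - 1/N$, this quantity equals $|S_{j,r}(\bX)|/N$ with the notation \eqref{eq:notS}, since the mass $1/N$ at $x_j$ is precisely what is subtracted.

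The main step is then a one-line estimate. Fix $j \in \{1,\dots,N\}$ and $r > 0$. For every index $i \in S_{j,r}(\bX)$ one has $0 < |x_i - x_j| < r$, so, using $\beta > 0$,
\[
r^{-\beta} \leq |x_i - x_j|^{-\beta}.
\]
Summing over $i \in S_{j,r}(\bX)$ and dividing by $N$ gives
\[
r^{-\beta}\,\frac{|S_{j,r}(\bX)|}{N}
\;\leq\; \frac{1}{N}\sum_{i \in S_{j,r}(\bX)} |x_i - x_j|^{-\beta}
\;\leq\; \frac{1}{N}\sum_{\substack{i=1\\ i\neq j}}^{N} |x_i - x_j|^{-\beta}
\;\leq\; C',
\]
the last inequality being the hypothesis of the lemma. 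Rearranging yields $m_{j,r}(\bX) \leq C' r^{\beta} = C' r^{d/q}$, which holds for all admissible $(j,r)$, so $\bX \in \M_p^N$ with $[\bX]_{\M_p^N} \leq C'$.

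There is no real obstacle here; the only thing worth double-checking is that we do not need to inflate the radius (as in Lemma \ref{lem:morrey}, where the factor $2^\beta$ appears). This is because the empirical Morrey seminorm $[\,\cdot\,]_{\M_p^N}$ is defined by testing balls centred only at the particles $x_i$, so the covering argument used in the continuum setting to pass from an arbitrary center to a center in $\supp \rho$ is unnecessary, and the constant one obtains is exactly $C'$ rather than $2^\beta C'$.
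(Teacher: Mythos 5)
Your proof is correct and is essentially identical to the paper's: both estimate $r^{-\beta} m_{j,r}(\bX)$ from above by $\frac1N\sum_{i\in S_{j,r}(\bX)} |x_i-x_j|^{-\beta}$, which is bounded by the full sum and hence by $C'$. The remark explaining why no factor $2^\beta$ appears (because the empirical Morrey seminorm only tests balls centred at particles) is a nice clarification, and it matches the paper's discrepancy between the constants in Lemma 3.4 and Lemma 4.6.
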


\begin{proof}
  We want to prove that $m_{j,r}(\bX) \leq C' r^\beta$ for all $r>0$ and $j\in\{1,\dots,N\}$,
  with the notation in \eqref{eq:notm}. Without loss of generality,
  assume $j=1$. We have
  \begin{equation*}
    r^{-\beta} m_{1,r}(\bX) \leq \frac1N \sum_{i \in S_{1,r}(\bX)} |x_i - x_1|^{-\beta}
    \leq \frac1N\sum_{i=2}^N |x_i - x_1|^{-\beta} \leq C',
  \end{equation*}
  since $\beta>0$, which is the result.
\end{proof}

We give an additional lemma whose proof is analogous to that of Lemma
\ref{lem:bound-potential-beta-ball} and we omit:

\begin{lem}
  \label{lem:bound-potential-beta-ball-discrete}
  Let $p > 1$, $q = p/(p-1)$ and $0 < \beta < d/q$. For all $r > 0$,
  there exists $C_r > 0$ (depending only on $\beta$, $r$, $q$ and $d$)
  such that $C_r\to0$ as $r \to 0$ and, for all $\bX \in \M_p^N$,
  \begin{equation*}
    \frac{1}{N} \sum_{i\in S_{j,r}} |x_i-x_j|^{-\beta}
    \leq C_r [\bX]_{\M_p^N} \quad \mbox{for all $j\in\oneN$},
  \end{equation*}
  where we refer the reader to the notation in \eqref{eq:notS}.
\end{lem}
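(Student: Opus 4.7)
The plan is to mimic almost verbatim the dyadic-decomposition argument of Lemma \ref{lem:bound-potential-beta-ball}, replacing the mass $\rho(B_s(x))$ by the discrete count $m_{j,s}(\bX)$ and using the discrete Morrey control from Definition \ref{defn:discrete-morrey-spaces}. Fix $p > 1$, $q = p/(p-1)$, $0 < \beta < d/q$, $r > 0$ and $j \in \oneN$. The idea is to partition the particles in $S_{j,r}(\bX)$ into dyadic annuli around $x_j$ and apply the Morrey bound on each.

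Concretely, for $k = 0, 1, 2, \ldots$ I would set
\begin{equation*}
A_k := \{i \in \oneN \mid i \neq j,\ 2^{-k-1} r \leq |x_i - x_j| < 2^{-k} r\},
\end{equation*}
so that $S_{j,r}(\bX) = \bigsqcup_{k \geq 0} A_k$. On each annulus $|x_i - x_j|^{-\beta} \leq 2^{(k+1)\beta} r^{-\beta}$, so
\begin{equation*}
\frac{1}{N} \sum_{i \in S_{j,r}(\bX)} |x_i-x_j|^{-\beta}
\leq \sum_{k=0}^{\infty} 2^{(k+1)\beta} r^{-\beta} \frac{\card A_k}{N}.
\end{equation*}
Since $\{i \neq j : |x_i - x_j| < 2^{-k} r\} \supseteq A_k$, I can estimate $\card A_k / N \leq m_{j, 2^{-k} r}(\bX) \leq [\bX]_{\M_p^N} (2^{-k} r)^{d/q}$ directly from \eqref{eq:notm}. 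Substituting yields
\begin{equation*}
\frac{1}{N} \sum_{i \in S_{j,r}(\bX)} |x_i-x_j|^{-\beta}
\leq 2^{\beta} r^{d/q - \beta} [\bX]_{\M_p^N} \sum_{k=0}^{\infty} 2^{k(\beta - d/q)}.
\end{equation*}

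The assumption $\beta < d/q$ guarantees that the geometric series converges, so setting
\begin{equation*}
C_r := 2^{\beta} r^{d/q - \beta} \sum_{k=0}^{\infty} 2^{k(\beta - d/q)}
\end{equation*}
produces a constant depending only on $\beta$, $r$, $q$ and $d$, with $C_r \to 0$ as $r \to 0$ because $d/q - \beta > 0$. No genuine obstacle arises: the entire argument is the same dyadic decomposition as in the continuum case, the only subtlety being the bookkeeping that excludes the particle $x_j$ itself (which is already built into the definition of $m_{j,s}(\bX)$), so the continuum proof transfers without modification.
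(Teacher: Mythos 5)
Your proof is correct and follows exactly the dyadic-annuli argument from Lemma \ref{lem:bound-potential-beta-ball}, which is precisely what the paper intends when it states that the proof is analogous to the continuum case and is omitted. The bookkeeping with $m_{j,2^{-k}r}(\bX)$ and the exclusion of the $j$th particle is handled correctly, and the observation that $\bX \in \M_p^N$ implicitly forbids coinciding particles (so the annuli exhaust $S_{j,r}(\bX)$) is sound.
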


Observe that, as for the continuum case in Section \ref{sec:prop-continuum}, Theorem \ref{thm:discrete-morrey} actually holds when $\bX\in\R^{Nd}$ has finite energy and it is a local minimiser of the discrete energy since one can easily check that Lemma \ref{lem:Lap0} stays true in this case. Local minimisers of the discrete energy are therefore discretely Morrey regular under the assumptions on $W$ of Theorem \ref{thm:discrete-morrey}.

\subsection{Euler--Lagrange estimate}
\label{subsec:eulerlagrange}

We prove an Euler--Lagrange estimate at the discrete level, as
discussed in the introduction. It is the discrete analogue of the
Euler--Lagrange equation given in the first line of
\eqref{eq:ELcont}. Recall that for every $\bX\in\R^{Nd}$ we write
\begin{equation*}
  P_i(\bX) := \frac1N\sum_{\substack{j=1\\j\neq i}}^N W(x_i-x_j)
  \quad \mbox{for all $i\in\oneN$}.
\end{equation*}
Note that $P_i(\bX) = p_i(x_i)$, where $p_i$ already appeared in the
proof of Lemma \ref{lem:Lap0}.

\begin{thm}
  \label{thm:constant-bounded}
  Suppose that $W$ satisfies Hypothesis \ref{hyp:bfb}--\ref{hyp:W-close-to-0} and let $\bX \in \R^{Nd}$ be a minimiser of $\E$. If $W$ satisfies Hypothesis \ref{hyp:finite}, then
  \be\label{eq:potential-energy}
    |P_i(\bX) - 2\E(\bX)| \leq \frac{W(0)-W_\mt{min}}{N}
    \quad \mbox{for all $i\in\oneN$}.
  \ee
If $W$ satisfies Hypothesis \ref{hyp:beta}, then there exist $A > 1$ and $k \in (0,1]$
  (independent of $N$ and $\bX$) such that
  \begin{equation*}
    |P_i(\bX) - 2\E(\bX)| \leq AN^{-k}
    \quad \text{for  all $i \in \oneN$}.
  \end{equation*}
  One can take $k = 2/((\beta-1)\beta)$.
\end{thm}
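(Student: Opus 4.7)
The key consequence of minimality is that, for each $i \in \oneN$, the point $x_i$ is a global minimiser of $y \mapsto p_i(y) := \frac{1}{N}\sum_{k\neq i} W(y-x_k)$, as already observed in the proof of Lemma~\ref{lem:Lap0}. Equivalently, for every $y\in\R^d$,
\[
N P_i(\bX) = N p_i(x_i) \leq N p_i(y) = \sum_{k \neq i} W(y - x_k) =: N P_i^y(\bX).
\]
My plan is to test this inequality with $y$ equal to, or close to, another particle $x_j$, which will force $P_i^y(\bX)$ to be close to $P_j(\bX)$ and thereby bound $|P_i(\bX) - P_j(\bX)|$. Since $2E_N(\bX) = \frac{1}{N}\sum_{j=1}^N P_j(\bX)$, a uniform estimate on $|P_i(\bX) - P_j(\bX)|$ automatically controls $|P_i(\bX) - 2E_N(\bX)|$ by averaging over $j$.

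Under Hypothesis~\ref{hyp:finite}, $W(0)$ is finite and I would take $y = x_j$ directly. A direct computation then gives, for $i \neq j$, that $N(P_i^{x_j}(\bX) - P_j(\bX)) = W(0) - W(x_j - x_i)$, and combining with the minimality inequality $P_i(\bX) \leq P_i^{x_j}(\bX)$ and the lower bound $W \geq W_\mathrm{min}$ yields $P_i(\bX) - P_j(\bX) \leq (W(0) - W_\mathrm{min})/N$. Swapping $i$ and $j$ gives the matching reverse inequality, and the announced bound \eqref{eq:potential-energy} follows by averaging over $j$.

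Under Hypothesis~\ref{hyp:beta}, $W(0) = +\infty$ rules out $y = x_j$, so I would \emph{average} the minimality inequality over $y \in B_\e(x_j)$ for a parameter $\e = \e(N) > 0$ to be chosen later. Setting $W_\e(z) := \dashint_{B_\e(z)} W$, this gives $NP_i(\bX) \leq \sum_{k \neq i} W_\e(x_j - x_k)$; subtracting $NP_j(\bX)$ and isolating the $k = j$ term, one obtains
\[
N\bigl(P_i(\bX) - P_j(\bX)\bigr) \leq W_\e(0) - W(x_j - x_i) + \sum_{k \neq i,j}\bigl(W_\e - W\bigr)(x_j - x_k).
\]
The first two terms are controlled by the pointwise bound $W(x) \leq C_W |x|^{2-\beta}$ (which gives $W_\e(0) \leq C\e^{2-\beta}$) and by $W(x_j - x_i) \geq W_\mathrm{min}$. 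For the remaining sum I would split between close particles ($|x_j - x_k| \leq 2\e$) and far ones: close contributions are bounded individually by $|x|^{2-\beta}$-type estimates and their number is controlled by the discrete Morrey bound $\#\{k : |x_j-x_k|\leq 2\e\} \lesssim \e^\beta N + 1$ coming from Theorem~\ref{thm:discrete-morrey}; far contributions are treated by a mean-value argument based on $|\nabla W(x)| \leq C_W |x|^{1-\beta}$, which reduces them to $\frac{\e}{N}\sum_k |x_j-x_k|^{1-\beta}$ and is then controlled by Lemma~\ref{lem:bound-potential-beta-ball-discrete} applied with exponent $\beta-1 < \beta = d/q$ together with the Morrey bound of Theorem~\ref{thm:discrete-morrey}. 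Collecting all the error terms produces an estimate of schematic form $\e^{2-\beta}/N + O(\e^a) + O(1/N)$, and optimising $\e$ as an appropriate negative power of $N$ to balance the singular term against the regularisation errors yields $|P_i(\bX) - P_j(\bX)| \leq AN^{-k}$ for the claimed exponent $k = 2/((\beta-1)\beta)$; swapping $i$ and $j$ and averaging over $j$ then concludes.

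The hard part will be extracting the correct power of $N$ from the competition between the singular term $W_\e(0)/N = O(\e^{2-\beta}/N)$, which blows up as $\e \to 0$, and the regularisation errors which shrink with $\e$. The discrete Morrey regularity of Theorem~\ref{thm:discrete-morrey} is the crucial new ingredient that tames the close-particle contribution; without it, particles clustering near $x_j$ would ruin the estimate. A secondary subtlety is that the gradient bound of Hypothesis~\ref{hyp:beta} holds only for $|x|\leq 1$, so pairs at distance greater than $1$ require a separate treatment using the $C^1$ regularity of $W$ away from the origin; fortunately those contributions are of lower order and can be absorbed.
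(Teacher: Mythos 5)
Your Hypothesis~\ref{hyp:finite} argument is the same as the paper's, just phrased via the minimality of $p_i(\cdot)=\frac1N\sum_{k\neq i}W(\cdot-x_k)$ at $x_i$ rather than as a contradiction obtained by overlapping $x_1$ onto $x_2$; the two computations are identical. For Hypothesis~\ref{hyp:beta} you take a genuinely different route: the paper also uses the minimality of $p_i$ but tests it at a single displaced point $x_2'$ at distance of order $N^{-\alpha}$ from $x_2$ (chosen so that $x_2'$ stays strictly closer to $x_2$ than any other particle, which needs the minimum-separation bound $|x_k-x_2|\gtrsim N^{-1/\beta}$ from Lemma~\ref{lem:beta-power}), whereas you average the minimality inequality over the ball $B_\varepsilon(x_j)$, turning the singular near-diagonal term into the integrable quantity $W_\varepsilon(0)\lesssim\varepsilon^{2-\beta}$. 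This mollification side-steps the need for the minimum-separation estimate and is arguably cleaner. The subsequent handling of the error terms --- close particles controlled by the discrete Morrey bound of Theorem~\ref{thm:discrete-morrey}, far particles controlled by a mean-value argument via $|\nabla W|\lesssim|x|^{1-\beta}$ together with Lemma~\ref{lem:bound-potential-beta-ball-discrete} --- mirrors the paper, as does the final balancing of $\varepsilon$ against $N$. Carried out carefully, your route actually produces the slightly better exponent $k=1/(\beta-1)$ rather than $2/(\beta(\beta-1))$, because feeding the far-particle sum through Lemma~\ref{lem:bound-potential-beta-ball-discrete} at radius $1$ saves a power compared to the crude bound $N\ell^{1-\beta}$ (number of particles times worst-case gradient) the paper uses; either value is admissible since the theorem claims only that \emph{some} $k\in(0,1]$ works. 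The one point you flag only in passing deserves more than a passing remark: the gradient bound in Hypothesis~\ref{hyp:beta} is local ($|x|\leq1$), and at this stage the diameter of $\bX$ is only known to be $O(N)$ from Theorem~\ref{thm:min-Rnd}, so the contributions from particles with $|x_j-x_k|>1$ need a separate argument (for instance deduced from $\Delta^0W\leq C_W$ together with the long-range monotonicity of Hypothesis~\ref{hyp:existence}); note that the paper's own proof also leans on the gradient bound for all $|x_2-x_i|\geq\ell$ without addressing this.
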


\begin{proof}
  First suppose that $W$ satisfies Hypothesis \ref{hyp:finite}. We first prove
  \begin{equation} \label{eq:Pij}
    |P_i(\bX) - P_j(\bX)| \leq \frac{W(0)-W_\mt{min}}{N}
    \quad \mbox{for all $i,j\in\oneN$}.
  \end{equation}
  To this end we proceed by contradiction by assuming that the result
  is not true. We move one particle of the minimiser at the exact
  location of another particle of the minimiser
  and show that the resulting energy is lower. With no loss of generality, suppose that
  $N(P_1(\bX) -P_2(\bX)) > W(0)-W_\mt{min}$, and that we move $x_1$ at the
  location of $x_2$. Write $\boldsymbol{X'}:= (x_2,x_2,x_3,\dots,x_N)$
  and compute
  \begin{align*}
    N^2\left(\E(\bX) - \E(\boldsymbol{X'})\right)
    &= \sum_{i=2}^N W(x_1 - x_i) - \sum_{i=2}^N W(x_2 - x_i)
    \\
    &= N(P_1(\bX) - P_2(\bX)) - W(0) + W(x_1-x_2)
    \\
    &> W(0) -W_\mt{min} -W(0) + W_\mt{min} = 0,
  \end{align*}
  which is a contradiction to the fact that $\bX$ is a
  minimiser of $\E$, and shows \eqref{eq:Pij}.
  Averaging \eqref{eq:Pij} over $j = 1, \dots, N$ gives, for all $i\in\{1,\dots,N\}$,
  \begin{align*}
    \frac{W(0)-W_\mt{min}}{N}
    &\geq \frac1N \sum_{j=1}^N |P_i(\bX) - P_j(\bX)|
    \\
    &\geq \left| P_i(\bX) - \frac1N \sum_{j=1}^N P_j(\bX) \right|
      = \left| P_i(\bX) - 2\E(\bX)\right|,
  \end{align*}
  which shows \eqref{eq:potential-energy}.

Suppose now that $W$ satisfies Hypothesis \ref{hyp:beta}. We first want to prove that, for some $A>1$ and $k\in(0,1]$,
  \begin{equation} \label{eq:Pij2}
    |P_i(\bX) - P_j(\bX)| \leq AN^{-k} \quad \mbox{for all $i,j\in\oneN$}.
  \end{equation}
  To this end we intend to reach a contradiction by assuming that
  \begin{equation*}
    P_1(\bX) - P_2(\bX) > AN^{-k}
  \end{equation*}
  for some $N\geq2$ arbitrarily large, and for some $A > 1$ and $k \in (0,1]$ to be chosen appropriately later. We intend to reach a contradiction for certain
  values of $A$ and $k$. We move the first particle of the
  minimiser, located at $x_1$, to a point $x_2'$ close to the second
  particle, located at $x_2$. We want to show that, with an
  appropriate choice of $x_2'$, the resulting energy is lower. Write
  $\boldsymbol{X'}:= (x_2',x_2,x_3,\dots,x_N)$ and compute
  \begin{align*}
    \label{eq:4}
    \numberthis 2N^2\left(\E(\bX) - \E(\boldsymbol{X'})\right)
    &= \sum_{i=2}^N W(x_1 - x_i) - \sum_{i=2}^N W(x_2' - x_i)
    \\
    &= N(P_1(\bX) - P_2(\bX))\\
    &\phantom{{}={}}+ \sum_{i=3}^N \left( W(x_2-x_i) - W(x_2'-x_i)
    \right)
    \\
    &\phantom{{}={}}- W(x_2'-x_2) + W(x_2-x_1)
    \\
    &> A N^{1-k} + \sum_{i=3}^N \left( W(x_2-x_i) - W(x_2'-x_i)
    \right)\\
    &\phantom{{}={}}- W(x_2'-x_2) + W_\mathrm{min}.
  \end{align*}
  We need to bound from below the remaining terms involving $W$ and
  show that they are strictly greater than $-A N^{1-k}$. To this end,
  $x_2'$ needs to be chosen carefully. We know by Lemma
  \ref{lem:beta-power} that
  \begin{equation} \label{eq:estimate}
    \frac1N \sum_{\substack{i=1\\i \neq 2}}^N |x_i - x_2|^{-\beta} \leq C',
  \end{equation}and in particular
  \begin{equation*}
    |x_i - x_2| \geq (C' N)^{-1/\beta} = C_1 N^{-1/\beta} \quad \text{ for all $i \neq 2$},
  \end{equation*}
  where $C_1 := (C')^{-1/\beta}$. Thus there are no other particles in a
  radius $C_1 N^{-1/\beta}$ around $x_2$. We pick $x_2'$ at less than
  half that distance to make sure that we stay away from other
  particles: we take
  \begin{equation} \label{eq:alpha-cond} 
    \alpha \geq 1/\beta, 
  \end{equation}
  to be chosen later, and pick $x_2'$ so that
  \begin{equation}
    \label{eq:13}
    2|x_2' - x_2| = C_1 N^{-\alpha} \leq C_1 N^{-1/\beta} \leq
    |x_i-x_2|
    \quad \text{for all $i \neq 2$.}
  \end{equation}
  Let us then bound the terms in \eqref{eq:4} directly involving $W$. We have, by Hypothesis \ref{hyp:beta} and since $N$ is large enough so that $C_1N^{-1/\beta}/2\leq 1$,
  \begin{equation*}
    W(x_2' - x_2) \leq C_W |x_2' - x_2|^{2-\beta}
    = C_W \left(\frac{C_1N^{-\alpha}}{2} \right)^{2-\beta}
    = C_2 N^{\alpha (\beta-2)},
  \end{equation*}
  where $C_2:= C_W(C_1/2)^{2-\beta}$. Since we need this to be
  smaller than $AN^{1-k}$, we impose
  $k = 1-\alpha (\beta-2)$, so that
  \begin{equation} \label{eq:est1}
    W(x_2' - x_2) \leq C_2 N^{1-k}.
  \end{equation}
  For the other term, pick a cut-off distance
  $\ell = \ell(N) < 1/3$, to be chosen later, and let
  \begin{equation*}
    S := S_{2,\ell}(\bX) \setminus \{1\},
  \end{equation*}
  where we refer the reader to the notation given in \eqref{eq:notS}.
  We write
  \begin{align*}\label{eq:9}
    \numberthis \left|
    \sum_{i=3}^N \left( W(x_2-x_i) - W(x_2'-x_i) \right)
    \right|&
    \leq \sum_{ \substack{i=3\\i \notin S} }^N
    \left| W(x_2-x_i) - W(x_2'-x_i) \right|
    \\
    &+ \sum_{ i \in S } \left| W(x_2-x_i) \right|
      + \sum_{ i \in S } \left| W(x_2'-x_i) \right|.
  \end{align*}
  The next-to-last term can be estimated, using
  \eqref{eq:estimate} and Hypothesis \ref{hyp:beta}, as
  \begin{align*}
    \frac1N \sum_{ i \in S }
    \left| W(x_2-x_i) \right|
    &\leq \frac{C_W}{N} \sum_{ i \in S } | x_2-x_i |^{2-\beta}\\
    &\leq C_W \left(
      \frac{1}{N} \sum_{ i \in S } | x_2-x_i |^{-\beta}
      \right)^{(\beta-2)/\beta}
      \left(\frac{|S|}{N}\right)^{2/\beta}
    \\ 
    &\leq C_W (C')^{(\beta-2)/\beta}
      \left(\frac{|S|}{N}\right)^{2/\beta}
      = C_3 \left(\frac{|S|}{N}\right)^{2/\beta},
  \end{align*}
  where $C_3:= C_W (C')^{(\beta-2)/\beta}$. On the other hand, due to Lemma
  \ref{lem:beta-implies-Morrey}, we have
  \begin{equation*}
    \frac{|S|}{N} \leq C'\ell^{\beta}.
  \end{equation*}
  Hence
  \begin{equation*}
    \sum_{ i \in S } \left| W(x_2-x_i) \right|
    \leq C_3 (C')^{2/\beta}  \ell^{2} N = C_4 \ell^{2} N,
  \end{equation*}
  where $C_4:= C_3 (C')^{2/\beta}$. This motivates the choice
  $\ell := N^{-k/2}$ which is less than $1/3$ for $N$ large enough, so that
  \begin{equation} \label{eq:est3}
    \sum_{ i \in S } \left| W(x_2-x_i) \right| \leq C_4 N^{1-k}.
  \end{equation}
  The last term in \eqref{eq:9} is comparable to the one we just
  bounded, since $|x_i - x_2|$ and $|x_i - x_2'|$ are comparable
  due to \eqref{eq:13}. Indeed,
  \begin{equation*}
    |x_i - x_2|
    \leq |x_i - x_2'| + |x_2' - x_2| = |x_i - x_2'|
    + \textstyle\frac12 |x_i - x_2|,
  \end{equation*}
  so that
  \begin{equation*}
    |x_i - x_2| \leq 2 |x_i - x_2'|.
  \end{equation*}
  With this, and what we proved above,
  \begin{equation}
    \label{eq:est4}
    \sum_{ i \in S } \left| W(x_2'-x_i) \right|
    \leq C_W \sum_{ i \in S } |x_2'-x_i|^{2-\beta}
    \leq 2^{\beta-2} C_W \sum_{ i \in S } |x_2-x_i|^{2-\beta}
    \leq C_5 N^{1-k},
  \end{equation}
  where $C_5:= 2^{\beta-2} C_WC_4$. Finally, for the first term in
  \eqref{eq:9}, notice that for $i \in S$ we have $|x_2 - x_i|
  \leq \ell$ (by definition of $S$), and also
  \begin{equation*}
    |x_2' - x_i| \leq |x_2'-x_2| + |x_2-x_i| \leq
    \textstyle \frac12 |x_2-x_i| + |x_2-x_i| \leq \textstyle \frac32 \ell
  \end{equation*}
  due to \eqref{eq:13}. Since we are requiring $\ell < 1/3$, both
  $x_2-x_i$ and $x_2'-x_i$ are in the ball of radius 1 centred at
  0 and we may use the gradient bound in Hypothesis
  \ref{hyp:beta} to get
  \begin{align*}
    \label{eq:est5}
    \numberthis \sum_{ \substack{i=3\\i \notin S} }^N
    \left| W(x_2-x_i) - W(x_2'-x_i) \right|
    &\leq C_W N \ell^{1-\beta} |x_2-x_2'|\\
    &\leq \textstyle \frac{C_1}{2} C_WN \ell^{1-\beta} N^{-\alpha}
    = C_6 N^{1 + k(\beta-1)/2 - \alpha}, 
  \end{align*}
  where $C_6:= C_1C_W/2$, thanks to \eqref{eq:13}. Since $\alpha (\beta-2) = 1-k$, choose
  $k$ so that $1 + k (\beta-1)/2 - (1-k)/(\beta-2) = 1-k$, that is,
  \begin{equation*}
    k := \frac{2}{(\beta-1)\beta}.
  \end{equation*}
  This gives
  \begin{equation*}
    \alpha = \frac{1-k}{\beta-2} = \frac{\beta+1}{(\beta-1)\beta}\geq \frac1\beta,
  \end{equation*}
  as required in
  \eqref{eq:alpha-cond}. Putting together \eqref{eq:4},
  \eqref{eq:est1}, \eqref{eq:9}, \eqref{eq:est3}, \eqref{eq:est4}
  and \eqref{eq:est5},
  \begin{equation*}
    2N^2\left(\E(\bX) - \E(\boldsymbol{X'})\right)
    > (A - C_2 - C_4 - C_5 - C_6) N^{1-k} + W_\mathrm{min}.
  \end{equation*}
  An appropriate choice of the constant $A$ makes this quantity
  positive for all $N$ large enough, contradicting the fact that $\bX$
  is a minimiser of $\E$, thus showing \eqref{eq:Pij2}.

  Averaging \eqref{eq:Pij2} over $j = 1, \dots, N$ we get, for all
  $i\in\oneN$,
  \begin{equation*}
    \frac{A}{N^{k}}
    \geq \frac1N \sum_{j=1}^N |P_i(\bX) - P_j(\bX)|
    \geq \left| P_i(\bX) - \frac1N \sum_{j=1}^N P_j(\bX) \right|
    = \left| P_i(\bX) - 2\E(\bX)\right|,
  \end{equation*}
  which ends the proof.
\end{proof}

\subsection{Diameter estimates}
\label{sec:diameter-discrete}

As a tool to prove our main result we need to introduce the following
notion of discrete instability:

\begin{defn}[Discrete instability]
  \label{defn:discrete-instability}
  Let $W\:\R^d\to\R\cup\{+\infty\}$ and suppose that $W_\infty: =\lim_{|x|\to\infty} W(x)$ exists (possibly
  $+\infty$). We say that $W$ is \emph{discretely unstable (with
    constant $s > 0$)} if there exist $s>0$ and $\bar N\geq2$ such
  that, for all $N>\bar N$, there exists $\bX\in\R^{Nd}$
  with
  \begin{equation*}
    E_N(\bX) < \textstyle{\frac12 W_\infty-s}.
  \end{equation*}
\end{defn}
Note that if $W$ is discretely unstable with some constant $s$, then
it is so with any $s' < s$. This definition is a
natural discrete version of the instability in Definition
\ref{defn:instability}, and it is the one we need in order to
carry out the next arguments. Actually, both concepts
turn out to be equivalent under Hypotheses
\ref{hyp:bfb}--\ref{hyp:W-close-to-0}; see Proposition
\ref{prop:unstable}. 

\begin{lem}
  \label{lem:lower-mass}
  Assume that $W$ satisfies Hypotheses
  \ref{hyp:bfb}--\ref{hyp:W-close-to-0} and is discretely
  unstable. There exist $\bar N\geq2$ and $r, m > 0$ depending only on
  $W$ such that, for each $N>\bar N$ and any minimiser $\bX$ of $E_N$
  on $\R^{Nd}$ it holds that
  \begin{equation*}
    m_{i,r}(\bX) \geq m
    \quad \mbox{for all $i\in\oneN$},
  \end{equation*}
  where we use the notation in \eqref{eq:notm}.
\end{lem}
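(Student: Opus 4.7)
The strategy is to combine the Euler--Lagrange estimate from Theorem \ref{thm:constant-bounded} with the discrete instability hypothesis to bound the potential $P_i(\bX)$ strictly below $W_\infty$ by a definite amount, and then argue that if $m_{i,r}(\bX)$ were too small the contribution from faraway particles (where $W \approx W_\infty$) would push $P_i(\bX)$ back towards $W_\infty$, producing a contradiction.

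First I would obtain a uniform upper bound on $P_i(\bX)$. By discrete instability there exists a trial configuration $\bY$ with $E_N(\bY) < \tfrac12 W_\infty - s$, so by minimality $2E_N(\bX) \leq 2E_N(\bY) < W_\infty - 2s$. Theorem \ref{thm:constant-bounded} then gives $|P_i(\bX) - 2E_N(\bX)| \leq \e_N$ with $\e_N \to 0$ (namely $\e_N = (W(0)-W_\mt{min})/N$ under Hypothesis \ref{hyp:finite} or $\e_N = AN^{-k}$ under Hypothesis \ref{hyp:beta}). Choosing $\bar N$ large enough that $\e_N \leq s$ for $N > \bar N$ yields
\begin{equation*}
  P_i(\bX) \leq W_\infty - s \quad \text{for every } i \in \oneN.
\end{equation*}

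Next, since $W(z) \to W_\infty$ as $|z| \to \infty$ by Hypothesis \ref{hyp:existence}, I pick $r > 0$ (depending only on $W$ and $s$) so that $W(z) \geq W_\infty - s/2$ for all $|z| \geq r$. Splitting the sum defining $P_i(\bX)$ over $S_{i,r}(\bX)$ and $T_{i,r}(\bX)$ from \eqref{eq:notS}--\eqref{eq:notT}, bounding $W \geq W_\mt{min}$ on the near part and $W \geq W_\infty - s/2$ on the far part, and using $|S_{i,r}(\bX)|/N = m_{i,r}(\bX)$ and $|T_{i,r}(\bX)|/N = (N-1)/N - m_{i,r}(\bX)$, I get
\begin{equation*}
  P_i(\bX) \geq m_{i,r}(\bX)\, W_\mt{min} + \Bigl(\tfrac{N-1}{N} - m_{i,r}(\bX)\Bigr)(W_\infty - s/2).
\end{equation*}
Confronting this with $P_i(\bX) \leq W_\infty - s$ and noting that $W_\mt{min} < W_\infty$ (a consequence of (discrete) instability, so one may assume $s < W_\infty - W_\mt{min}$), the coefficient $W_\infty - W_\mt{min} - s/2$ is strictly positive and rearrangement gives
\begin{equation*}
  m_{i,r}(\bX) \geq \frac{s/2 - (1/N)(W_\infty - s/2)}{W_\infty - W_\mt{min} - s/2},
\end{equation*}
which for $N$ large enough is bounded below by a positive constant $m$ depending only on $W$.

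\textbf{Main obstacle.} The delicate point is the case $W_\infty = +\infty$, in which the inequality in the definition of discrete instability is essentially vacuous and does not by itself provide an absolute upper bound on $E_N(\bX)$. To cover this case one has to exhibit a trial configuration with energy uniformly bounded in $N$ (compatible with the singular behaviour allowed by Hypothesis \ref{hyp:beta}), thereby producing a constant $C_P$ with $P_i(\bX) \leq C_P$ independent of $N$; the argument above then proceeds verbatim with $W_\infty - s/2$ replaced by an arbitrarily large $M$ (chosen after $C_P$) and $W_\infty - s$ replaced by $C_P$, yielding the same kind of strictly positive lower bound on $m_{i,r}(\bX)$.
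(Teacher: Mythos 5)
Your proposal is correct and follows essentially the same route as the paper: combine discrete instability with Theorem \ref{thm:constant-bounded} to obtain a uniform upper bound on $P_i(\bX)$, split $P_i(\bX)$ into near- and far-particle contributions bounded below by $W_\mathrm{min}$ and a value near $W_\infty$ respectively, and rearrange. The $W_\infty=+\infty$ issue you flag is resolved in the paper exactly as you sketch: local integrability of $W$ gives some $\rho$ with $E(\rho)<\infty$, and the recovery-sequence Lemma \ref{lem:discrete-approximation} then bounds $\limsup_N$ of the minimal discrete energies by $E(\rho)$, producing the finite constant you call $C_P$. The paper phrases the whole argument with a single threshold $a$ strictly between that supremum and $W_\infty/2$ and a radius $r$ with $W>2a$ outside $B_r$, so that the cases $W_\infty<\infty$ and $W_\infty=+\infty$ run through one computation rather than being treated separately as in your write-up.
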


\begin{proof}
  Suppose first that $W$ satisfies Hypothesis
  \ref{hyp:finite}. Let $\bX$ be a minimisers of $\E$ and write $E_N^0 := \E(\bX)$. Then, by Theorem \ref{thm:constant-bounded}, for all
  $i\in\oneN$, \begin{equation*} P_i(\bX) \leq 2\E^0+ \frac{W(0) -
      W_\mt{min}}{N}.  \end{equation*} Let $s>0$ be the constant in the definition of discrete instability. We can pick
  $\bar N\geq2$ such that, for all $N>\bar N$,
  $E_N^0 < W_\infty/2 - s$. Thus, 
  \begin{equation*}
    E_0:= \sup_{N\geq \bar N} E_N^0 \leq \textstyle \frac 12 W_\infty
    -s < \textstyle\frac12 W_\infty.
  \end{equation*} 
  Let $\rho\in\P(\R^d)$ be such that $E(\rho)<+\infty$, which exists by local integrability of $W$. By Lemma \ref{lem:discrete-approximation} there exists a sequence of particle configurations $(\bXNs)_{N\geq2}$ such that
  \begin{equation*} 
  \limsup_{N\to\infty}
  E_N^0 \leq \lim_{N\to\infty} \E(\bXNs) = E(\rho) < +\infty,
  \end{equation*} 
  so that $E_0$ is finite even if $W_\infty$ is not. We can then
  take $a$ such that
  $W_\mt{min}/2 \leq E_0 < a < W_\infty/2$. Let $r>0$
  be such that $W(x) > 2a$ for all $|x|>r$. Compute, for all
  $N>\bar N$,
\begin{align*}
	2E_0 + \frac{W(0) - W_\mt{min}}{N} \geq P_i(\bX) &= \frac1N\sum_{j\in S_{i,r}(\bX)} W(x_i-x_j) + \frac1N\sum_{j\in T_{i,r}(\bX)} W(x_i-x_j)\\
	&\geq W_\mt{min} m_{i,r}(\bX) + \frac{2a}{N} \sum_{j\in T_{i,r}(\bX)} 1\\
	&= W_\mt{min} m_{i,r}(\bX) + 2a \left(1- m_{i,r}(\bX) -\frac1N\right)\\
	&= (W_\mt{min} - 2a) m_{i,r}(\bX) + 2a\left(1-\frac1N\right),
\end{align*}
where the notation is as in \eqref{eq:notS} and \eqref{eq:notT}. Since $W_\mt{min} < 2a$ we get
\begin{equation*}
	m_{i,r}(\bX) \geq \frac{2E_0 -2a + N^{-1}(W(0) - W_\mt{min} +2a)}{W_\mt{min} -2a}.
\end{equation*}
Since $E_0 < a$, there exists $b >0$ such that $b< 2a - 2E_0$. Then,
for some number of particles large enough, which we still denote by $\bar N$, we have
$(W(0) - W_\mt{min} +2a)/N < 2a - 2E_0 - b$ for all $N > \bar
N$. Therefore,
\begin{equation*}
  m_{i,r}(\bX) \geq \frac{-b}{W_\mt{min} -2a} =: m>0 \quad \mbox{for all $N>\bar N$.}
\end{equation*}
The choices of $a$ and $b$ only depend on $W$ and therefore $r$ and $m$ only depend on $W$ as well, which shows the result when Hypothesis \ref{hyp:finite} holds.

If now $W$ satisfies Hypothesis \ref{hyp:beta}, then the
arguments above can still be carried out in the same fashion using the second part of
Theorem \ref{thm:constant-bounded} instead of the first.
\end{proof} 

\begin{thm}
  \label{thm:discrete-diameter}
  Assume that $W$ satisfies Hypotheses
  \ref{hyp:bfb}--\ref{hyp:W-close-to-0} and it is discretely unstable. There is a
  constant $K > 0$ depending only on $W$ (in particular, independent
  of $N$) such that the diameter of any discrete minimiser is less than $K$.
\end{thm}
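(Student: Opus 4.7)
The plan is to argue by contradiction. Suppose that there exists a sequence of minimisers $(\bXN)_N$ of $\E$ with $\diam \bXN \to \infty$. Combining the approximate Euler--Lagrange estimate (Theorem \ref{thm:constant-bounded}) with discrete instability, there exist $s > 0$ and a sequence $\eta_N \to 0$ such that, for all $N$ large enough,
\[
P_i(\bXN) \leq 2\E(\bXN) + \eta_N < W_\infty - 2s + \eta_N \quad \mbox{for all $i\in\oneN$.}
\]
The goal is to exhibit, for each such $N$, a particle $x_{i^*_N}$ whose potential $P_{i^*_N}(\bXN)$ is large enough to contradict this upper bound.

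First I would establish a tightness property for the family $(\mu_{\bXN})$, after a suitable translation. By Lemma \ref{lem:lower-mass}, each particle has mass at least $m$ in the ball of radius $r$ around it, so a maximal disjoint-balls argument shows that the particles of $\bXN$ are contained in at most $\lfloor 1/m \rfloor$ balls of radius $2r$ (``clusters''). Two such clusters of substantial mass cannot be separated by an arbitrarily large distance without contradicting discrete instability: the cross-cluster pair contribution to $2\E(\bXN)$ tends to $2\mu_1\mu_2 W_\infty$ as their separation goes to infinity, forcing $2\E(\bXN)$ above $W_\infty - 2s$. Hence, after translating $\bXN$ so that its largest cluster is centred at the origin, uniform tightness holds: for every $\tau > 0$ there exists $R_\tau > 0$ independent of $N$ such that $\mu_{\bXN}(B_{R_\tau}) \geq 1-\tau$ for all $N$ large enough.

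Second, since $\diam \bXN \to \infty$, the translated configuration must contain a particle $x_{i^*_N}$ with $|x_{i^*_N}| \to \infty$. Splitting the sum defining $P_{i^*_N}(\bXN)$ into contributions from particles lying in $B_{R_\tau}$ (at least $(1-\tau)N - 1$ terms, all at distance $\geq |x_{i^*_N}| - R_\tau \to \infty$ from $x_{i^*_N}$) and the remaining terms (at most $\tau N$ of them), the first part is bounded below by $(1-\tau) \inf_{|y|\geq |x_{i^*_N}| - R_\tau} W(y) \to (1-\tau) W_\infty$. The second part is bounded below by $\tau W_\mt{min}$ under Hypothesis \ref{hyp:finite}, while under Hypothesis \ref{hyp:beta}, where $W$ may be unbounded near the origin, the discrete Morrey regularity (Theorem \ref{thm:discrete-morrey}) combined with Lemma \ref{lem:bound-potential-beta-ball-discrete} controls the possibly unbounded close-range contribution. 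Choosing $\tau$ small enough (so that $\tau (W_\infty + |W_\mt{min}|) < 2s$) yields for $N$ large enough $P_{i^*_N}(\bXN) > W_\infty - 2s + \eta_N$, contradicting the Euler--Lagrange upper bound.

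The main obstacle is the cluster-tightness step, that is, ruling out quantitatively, for actual minimisers, configurations in which two clusters of positive mass are arbitrarily far apart. This is where discrete instability is used decisively, through the cross-cluster pair contribution argument sketched above; this is the discrete counterpart of the compact-support argument for continuum minimisers in \cite{CCP}. The role of Morrey regularity is to absorb the potentially singular close-range terms in $P_{i^*_N}(\bXN)$ when $W$ is $\beta$-repulsive, so that the same strategy goes through in the singular case.
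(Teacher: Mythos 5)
Your proposal correctly starts from Lemma \ref{lem:lower-mass} and the covering of the minimiser by at most $\lceil 1/m\rceil$ balls of radius $2r$, which is exactly the paper's first step. But the next step—the tightness claim—contains a genuine gap, and it is precisely the place where the paper does something different.

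You claim that if two clusters of masses $\mu_1,\mu_2>0$ are separated by a large distance, then the cross-cluster pair contribution (which tends to $2\mu_1\mu_2 W_\infty$) ``forces $2\E(\bXN)$ above $W_\infty - 2s$.'' This does not follow. Writing $2E_N=2E_1+2E_2+(\text{cross})$, the within-cluster energies $E_1,E_2$ are only bounded below by $\mu_j^2 W_{\min}/2$, which can be very negative; so even with cross $\approx 2\mu_1\mu_2 W_\infty$, one only gets
$2E_N \gtrsim (\mu_1^2+\mu_2^2)W_{\min}+2\mu_1\mu_2 W_\infty$, and since $\mu_1^2+\mu_2^2\geq 1/2$, this need not exceed $W_\infty-2s$ unless $W_\infty-W_{\min}\leq 4s$, which is not assumed. (The argument does work when $W_\infty=+\infty$, but not in general.) The paper avoids this entirely: to bound the distance between consecutive clusters, it does \emph{not} compare the energy to the instability threshold, but instead re-runs the sliding argument of Lemma \ref{lem:min-BRN}—using Hypothesis \ref{hyp:existence} that $W$ is radially strictly increasing outside $B_{R_W}$—to conclude that a gap larger than $2R_W$ in any coordinate can be shortened with a strict energy decrease, contradicting minimality directly. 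This gives $|\pi_k(x_{i_j})-\pi_k(x_{i_{j+1}})|\leq 4r+2R_W$ between consecutive centers in each coordinate and hence a diameter bound depending only on $m$, $r$, $R_W$, $d$. So the role of discrete instability is confined to Lemma \ref{lem:lower-mass} (to obtain $m$ and $r$), while the boundedness of the support comes from the monotonicity of $W$, not from any tightness statement.

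Two further remarks on your step 2. First, once tightness \emph{were} established, your argument at the far-away particle would indeed give a contradiction, but it is essentially a re-derivation of the content of Lemma \ref{lem:lower-mass} (which already says that each particle must have nearby mass, else its potential is too close to $W_\infty$); it is simpler to combine the mass lower bound with the sliding argument as the paper does. Second, the close-range terms in $P_{i^*_N}$ are bounded below simply by $\tau W_{\min}$ in both Hypotheses \ref{hyp:finite} and \ref{hyp:beta}, since $W\geq W_{\min}$ always; the discrete Morrey regularity and Lemma \ref{lem:bound-potential-beta-ball-discrete} control singular contributions from \emph{above}, not from below, and are not needed at this point. (They do enter the proof of Theorem \ref{thm:constant-bounded}, and hence of Lemma \ref{lem:lower-mass}, but only there.)
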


\begin{proof}
  Let $\bar N\geq2$, $m$ and $r$ be as in Lemma \ref{lem:lower-mass},
  and let $\bX \in \R^{Nd}$ be a minimiser of $E_N$ for some $N > N$.
  We can carry out an argument along the same lines as in the proof of
  \cite[Lemma 2.9]{CCP}. We briefly explain the idea: due to Lemma
  \ref{lem:lower-mass}, in a ball of radius $r$ around each $x_i$
  there are at least $mN$ other particles; hence there exist
  $\ell \leq \ceil{1/m}$ indices $i_1, \dots, i_\ell$ (where
  $\ceil{\cdot}$ is the ceiling function) such that
  \begin{equation*}
    \{x_1, \dots, x_N\} \subset B_{2r}(x_{i_1}) \cup \dots \cup B_{2r}(x_{i_\ell}),
  \end{equation*}
  and such that the balls $B_r(x_{i_1}),\dots,B_r(x_{i_\ell})$ are disjoint. Now,
  relabel the points $x_{i_1}, \dots, x_{i_\ell}$ so that they are
  ordered according to their first coordinate. Following the same
  argument as in Lemma \ref{lem:min-BRN} we see that
  \begin{equation*}
    |\pi_1(x_{i_k}) - \pi_1(x_{i_{k+1}})| \leq 4r + 2 R_W \quad \mbox{for all $k\in\{1,\dots,\ell-1\}$},
  \end{equation*}
  where $R_W$ is the constant in Hypothesis \ref{hyp:existence}
  (otherwise one can slightly shorten the gap in the first coordinate
  and decrease the energy). This shows that the diameter of the
  projection of the set $\{x_1, \dots, x_N\}$ in the first coordinate
  is not larger than $2 (\ceil{1/m}-1)(2r + R_W) + 4r$. As the argument can
  be repeated for all projections, we deduce that
  \begin{equation*}
    \diam\bX
    \leq 2 \sqrt{d} (\ceil{ 1/m }-1) (2r + R_W) + 4r =: K_1.
  \end{equation*}
  This holds for any $N > \bar N$. Since for $N \leq \bar N$ the
  diameter of minimisers is bounded by $K_2 := 2\sqrt{d}(\bar N - 1) R_W$ by Theorem \ref{thm:min-Rnd}, we obtain
  the result for $K := \max\{K_1, K_2\}$.
\end{proof}

\section{Many-particle limit}
\label{sec:convergence-minimisers}

In this section we complete the proof of Theorem \ref{thm:main}.

\subsection{Convergence of discrete minimisers} \label{subsec:convergence-discrete}
We show that if the
potential $W$ is unstable then any sequence of discrete minimisers has
a subsequence which converges in the narrow topology (up to
translations) to a continuum minimiser as $N \to \infty$. 

We first prove the following:
\begin{lem}\label{lem:convergence-minimiser}
	Let $W$ satisfy Hypotheses \ref{hyp:bfb}--\ref{hyp:W-close-to-0} and let it be discretely unstable. Then any sequence $(\bXN)_{N\geq2}$ of discrete minimisers converges, up to a subsequence and to translations, to some $\rho\in\P(\R^d)$. 
\end{lem}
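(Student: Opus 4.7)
The plan is to reduce this to a direct application of Prokhorov's theorem, using the uniform diameter bound already established in Theorem \ref{thm:discrete-diameter}.

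First, since $W$ satisfies Hypotheses \ref{hyp:bfb}--\ref{hyp:W-close-to-0} and is discretely unstable, Theorem \ref{thm:discrete-diameter} gives a constant $K > 0$, independent of $N$, such that $\diam \bXN \leq K$ for every $N \geq 2$. For each $N$, write $\bXN = (x_1^N, \dots, x_N^N)$ and set $\tau_N := x_1^N \in \R^d$. Then the translated configuration $\bXN - \tau_N := (x_1^N - \tau_N, \dots, x_N^N - \tau_N)$ has all its particles contained in the closed ball $\overline{B}_K$, so its empirical measure $\mu_{\bXN - \tau_N}$ is a Borel probability measure on $\R^d$ supported in the fixed compact set $\overline{B}_K$.

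Next, I would note that the family $\{\mu_{\bXN - \tau_N}\}_{N\geq 2}$ is automatically tight, since for every $\varepsilon > 0$ the compact set $\overline{B}_K$ satisfies $\mu_{\bXN - \tau_N}(\overline{B}_K) = 1 \geq 1-\varepsilon$ for all $N$. By Prokhorov's theorem there exists a subsequence $(N_k)$ and some $\rho \in \P(\R^d)$ (necessarily supported in $\overline{B}_K$) such that $\mu_{\bX_{N_k} - \tau_{N_k}}$ converges to $\rho$ in the narrow topology. In the language introduced in Section \ref{sec:introduction}, this exactly means that the sequence $(\bXN)_{N \geq 2}$ of discrete minimisers converges narrowly to $\rho$ up to translations and up to extraction of a subsequence, which is the desired statement.

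There is essentially no hard step in this argument: the substantive work lies upstream, namely in proving the $N$-independent diameter bound of Theorem \ref{thm:discrete-diameter}, which itself relied on the Morrey-type regularity (Theorem \ref{thm:discrete-morrey}), the Euler--Lagrange estimate (Theorem \ref{thm:constant-bounded}), and the lower mass bound (Lemma \ref{lem:lower-mass}). Once that uniform bound is in hand, narrow precompactness of the translated empirical measures is immediate from tightness on a common compact support. I should emphasise that this lemma does \emph{not} yet claim that the limit $\rho$ is a minimiser of the continuum energy $E$; that identification will require passing to the limit in the energy, which is addressed separately via the $\Gamma$-convergence results of Section \ref{sec:gamma}.
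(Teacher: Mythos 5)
Your argument is correct and follows essentially the same route as the paper's: invoke the uniform diameter bound from Theorem \ref{thm:discrete-diameter}, translate each minimiser into a fixed ball using translation invariance of $E_N$, and extract a narrowly convergent subsequence by compactness. You spell out the compactness step explicitly via tightness and Prokhorov's theorem, but that is exactly what the paper is implicitly using; there is no meaningful difference in approach.
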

\begin{proof}
Let
$(\bXN)_{N\geq2}$ be a sequence such that $\bXN$ is a minimiser of
$\E$ for all $N\geq2$. The diameter of $\bXN$ is uniformly bounded by
the constant $K$ in Theorem \ref{thm:discrete-diameter}.  Since $\E$
is translation invariant there exists a sequence of minimisers
of $\E$, which we still denote by $(\bXN)_{N\geq2}$, obtained by
suitable translations of the original sequence and
such that $\bXN \in (B_{K})^N$ for all $N \geq 2$. Then, since $K$ is
independent of $N$ we can extract a subsequence of $(\bXN)_{N\geq2}$ converging in the narrow
topology to a $\rho\in\P(\R^d)$.
\end{proof}

In order to complete the proof of Theorem \ref{thm:main}\eqref{it:main-bounded}
we need to show that instability (Definition \ref{defn:instability})
implies discrete instability (Definition
\ref{defn:discrete-instability}). In fact, we show that they are both
equivalent under Hypotheses \ref{hyp:bfb}--\ref{hyp:W-close-to-0}. We
also take the opportunity to compare them to the concept of
$H$-stability found in statistical mechanics; see for example
\cite[Definition 3.2.1]{Ruelle}. We actually define
$H$-\emph{in}stability, which is its complementary:

\begin{defn}[$H$-instability]
  \label{defn:H-instability}
  Let $W\:\R^d\to\R\cup\{+\infty\}$ and suppose that $W_\infty: =\lim_{|x|\to\infty} W(x)$ exists (possibly $+\infty$). We say that
  $W$ is \emph{$H$-unstable} if, for all $B\in\R$, there exist
  $N\geq2$ and $\bX\in\R^{Nd}$ with
  \begin{equation*}
    E_N(\bX) < \tfrac12 W_\infty -\tfrac{B}{2N}.
  \end{equation*}
\end{defn}

The proposition below shows that there
is equivalence among instability, discrete instability and
$H$-instability if Hypotheses \ref{hyp:bfb}, \ref{hyp:existence} and
\ref{hyp:finite} hold; equivalence between $H$-instability and instability if Hypothesis \ref{hyp:finite} holds was
already proved in \cite{SST}. If Hypotheses \ref{hyp:bfb},
\ref{hyp:existence} and \ref{hyp:beta} hold (so that a
singularity of the potential at $x=0$ is allowed), then we only have that
instability and discrete instability are equivalent and that they
imply $H$-instability. Whether the converse implication is true or not
in this case is an open question. By the proof of Proposition
\ref{prop:unstable} one sees that the main difficulty when $W$ is
unbounded is that we cannot take $B=W(0)$ in the Definition of
$H$-instability; therefore, what we can prove by our approach is only
that $H$-instability implies the complementary of strict stability.

A word on the terminology is in order: we have chosen
Definitions \ref{defn:instability} and \ref{defn:discrete-instability}
so as to maintain agreement with ``instability'' in the statistical
mechanics literature as, for example, in \cite{Bavaud}. More
importantly, we have wanted to keep ``stable'' as the opposite concept
of ``unstable'', which due to the equivalences above determines a
natural definition. Unfortunately, this terminology leaves us without
a good term to say ``there exists $\rho \in \P(\R^d)$ with
$E(\rho) \leq W_\infty/2$'', that is, to say ``$W$ is not strictly stable''.

In order to compare the concepts of stability we need to use a good
discrete approximation to a given measure $\rho$. We give it in the
following lemma, whose proof is postponed to Section
\ref{sec:gamma}. The following result actually implies the
$\Gamma$-convergence of the discrete energy to the continuum one; we
refer to Section \ref{sec:gamma} for details on this.

\begin{lem}
  \label{lem:discrete-approximation}
  Assume that the potential $W$ satisfies Hypotheses
  \ref{hyp:bfb}--\ref{hyp:W-close-to-0}.
  \begin{enumerate}
  \item \label{it:liminf-lemma} All sequences $(\bXN)_{N\geq2}$ with
    $\bXN \in \R^{Nd}$ for all $N\geq2$ such that
    $\mu_{\bXN} \rightharpoonup \rho$ in the narrow topology as $N \to \infty$ for
    some $\rho \in \P(\R^d)$ satisfy
    \begin{equation*}
      E(\rho) \leq \liminf_{N \to \infty} \E(\bXN).
    \end{equation*}

  \item \label{it:limsup-lemma} Let $\rho \in \P(\R^d)$ if Hypothesis
    \ref{hyp:finite} holds, or $\rho \in \M_p(\R^d)\cap\P(\R^d)$ with
    $p = d/ (d-\beta)$ if Hypothesis \ref{hyp:beta} holds. There
    exists a sequence $(\bXNs)_{N\geq2}$ with $\bXNs \in \R^{Nd}$ for
    all $N\geq2$ such that $\mu_{\bXNs} \rightharpoonup \rho$ in the narrow
    topology as $N \to \infty$ and
    \begin{equation*}
      E(\rho) = \lim_{N \to \infty}
      \E(\bXNs).
    \end{equation*}
    (We refer to this subsequence as a \emph{recovery sequence} for
    $\rho$.)
  \end{enumerate}
\end{lem}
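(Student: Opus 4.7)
The plan is to treat Parts~(1) and~(2) separately via a measure-theoretic reformulation of $\E$.

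For Part~(1), I would rewrite the discrete energy as $2\E(\bXN) = \int_{\R^d\times\R^d} W(x-y)\,\mathrm{d}\nu_N(x,y)$ with the off-diagonal empirical product $\nu_N := \tfrac{1}{N^2}\sum_{i\neq j}\delta_{(x_i,x_j)}$. Since $\mu_{\bXN}\otimes\mu_{\bXN}\rightharpoonup\rho\otimes\rho$ narrowly whenever $\mu_{\bXN}\rightharpoonup\rho$, and the diagonal defect $\mu_{\bXN}\otimes\mu_{\bXN}-\nu_N = \tfrac{1}{N^2}\sum_i\delta_{(x_i,x_i)}$ has total mass $1/N\to 0$, we also have $\nu_N\rightharpoonup\rho\otimes\rho$ narrowly. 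The map $(x,y)\mapsto W(x-y)$ is lower semicontinuous and bounded below by Hypothesis~\ref{hyp:bfb}, so the Portmanteau inequality for lower semicontinuous integrands bounded below gives $2E(\rho)\leq\liminf_N 2\E(\bXN)$, which is the claim.

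For Part~(2), I would use a partition-and-quantize construction. For each $N$, choose a Borel partition $\{A_{1,N},\dots,A_{N,N}\}$ of $\R^d$ with $\rho(A_{i,N})=1/N$ and $\max_i\diam(A_{i,N}\cap B_R)\to 0$ for every $R>0$; such a partition exists once one reduces to non-atomic $\rho$ (automatic under Hypothesis~\ref{hyp:beta} since $p>1$, and by a harmless perturbation under Hypothesis~\ref{hyp:finite}). Pick any representative $x_i^*\in A_{i,N}$ and set $\bXNs=(x_1^*,\dots,x_N^*)$; then $\mu_{\bXNs}\rightharpoonup\rho$ narrowly by a uniform-continuity argument on bounded continuous test functions. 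Under Hypothesis~\ref{hyp:finite}, $W$ is continuous and bounded, so narrow convergence of the product measures yields $\int W\,\mathrm{d}(\mu_{\bXNs}\otimes\mu_{\bXNs})\to 2E(\rho)$, and subtracting the diagonal correction $W(0)/N\to 0$ gives $\E(\bXNs)\to E(\rho)$.

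Under Hypothesis~\ref{hyp:beta} the direct argument fails because $W(0)=+\infty$, and here the Morrey regularity $\rho\in\M_p(\R^d)$ with $p=d/(d-\beta)$ enters essentially. I would refine the partition so that cells are adapted to the local $\rho$-mass scale, and split the error $\E(\bXNs)-E(\rho)$ according to whether the pair distance $|x_i^*-x_j^*|$ exceeds a vanishing threshold $\delta_N$. The long-range pairs are handled by the gradient bound $|\nabla W(z)|\leq C_W|z|^{1-\beta}$ from Hypothesis~\ref{hyp:beta}, giving a quadrature error controlled by the cell diameters. The short-range pairs and the within-cell self-energies are dominated using $|W(z)|\leq C_W|z|^{2-\beta}$ combined with Lemma~\ref{lem:bound-potential-beta-ball}, which yields a uniformly vanishing bound on $\int_{B_r(x)}|x-y|^{2-\beta}\,\mathrm{d}\rho(y)$ as $r\to 0$ (since $\beta-2<d/q=\beta$); the discrete analogue Lemma~\ref{lem:bound-potential-beta-ball-discrete} controls the corresponding discrete near-field sum.

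The main obstacle is precisely this singular case: the partition and the in-cell representatives $x_i^*$ must be chosen compatibly with the Morrey regularity so that the discrete near-field sum $\tfrac{1}{N^2}\sum_{|x_i^*-x_j^*|<\delta_N} W(x_i^*-x_j^*)$ is quantitatively comparable to $\iint_{|x-y|<\delta_N} W(x-y)\,\mathrm{d}\rho(x)\,\mathrm{d}\rho(y)$. Without this compatibility, particle pairs could end up anomalously close relative to the Morrey scale, letting the singularity of $W$ invalidate the comparison and spoil the energy limit.
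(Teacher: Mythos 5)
Your Part~\eqref{it:liminf-lemma} argument is correct and is a genuinely cleaner route than the paper's. By excising the diagonal from the empirical product to form $\nu_N = \frac{1}{N^2}\sum_{i\neq j}\delta_{(x_i,x_j)}$, observing that the diagonal correction has mass $1/N\to 0$ (so $\nu_N\rightharpoonup\rho\otimes\rho$ narrowly with total masses $1-1/N\to 1$), and invoking lower semicontinuity of $\nu\mapsto\int W(x-y)\,\mathrm{d}\nu$ on nonnegative measures for lsc, bounded-below integrands, you obtain $2E(\rho)\leq\liminf 2\E(\bXN)$ in one stroke, for \emph{both} Hypotheses \ref{hyp:finite} and \ref{hyp:beta}. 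The paper, by contrast, handles Hypothesis \ref{hyp:beta} by truncating the potential to $W_\e$ with $W_\e(0)=1/\e$, obtaining $\liminf\E(\bXN)\geq E_\e(\rho)$, and then running a separate contradiction argument to rule out an atomic part of $\rho$ so that $E_\e(\rho)=E(\rho)$. Your approach sidesteps the atom discussion entirely; it is a legitimate simplification and you should feel free to present it that way.

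Part~\eqref{it:limsup-lemma}, however, has a genuine gap in the singular case, and you have in fact correctly diagnosed it yourself in your final paragraph. Under Hypothesis \ref{hyp:beta} the whole difficulty is to construct the quantization $\bXNs$ so that the empirical measure inherits a \emph{uniform} discrete Morrey bound $[\bXNs]_{\M_p^N}\leq M$ with $M$ independent of $N$; only then do Lemma \ref{lem:bound-potential-beta-ball-discrete} and its continuous counterpart Lemma \ref{lem:bound-potential-beta-ball} let you absorb the near-field contributions as $N\to\infty$. An abstract $\rho$-equipartition $\{A_{i,N}\}$ with $\rho(A_{i,N})=1/N$ and arbitrary representatives $x_i^*\in A_{i,N}$ does not give this: if $\rho$ has a local density blow-up consistent with $\rho\in\M_p$, the cells near that point become tiny, and nothing prevents many representatives from clustering at scales far below the Morrey scale $N^{-q/d}$, which is exactly the scenario you flag as potentially ``spoiling the energy limit.'' Saying you ``would refine the partition so that cells are adapted to the local $\rho$-mass scale'' names the requirement but does not discharge it. The paper's construction in Section \ref{subsubsec:approx} resolves this concretely: it divides $[-L,L)^d$ into fixed cubes $Q_i$ of side $2L/n$ with $n=\lfloor N^{1/(4d)}\rfloor$, places $N_i=\lfloor n^{4d}\rho(Q_i)\rfloor$ particles at nodes of a \emph{uniform grid} inside each $Q_i$, and dumps the $O(N^{1-1/(4d)})$ leftover particles on a uniform grid in an auxiliary cube a distance $\geq 2L$ away. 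The uniform grid spacing inside each cube is bounded below precisely via $\rho\in\M_p$, and Step~2 of the proof of Lemma \ref{lem:approx-measure} then verifies $[\bXNs]_{\M_p^N}\leq M$ uniformly. That verification is the heart of the argument, and it is what your proposal still lacks. (Two smaller points: you would also need the density step for noncompactly supported $\rho$, cf.\ Section \ref{sec:non-compact-support}; and in the bounded case your ``harmless perturbation'' to make $\rho$ non-atomic is unnecessary, since when $W\in C_b$ the energy $E$ is narrowly continuous and any empirical approximation works.)
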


Using this approximation result and Lemma
\ref{lem:convergence-minimiser} we can show the following:

\begin{prop}
  \label{prop:unstable}
  Suppose that $W$ satisfies Hypotheses
  \ref{hyp:bfb}--\ref{hyp:W-close-to-0} and
  $W_\infty: =\lim_{|x|\to\infty} W(x)$ exists (possibly
  $+\infty$). If Hypothesis \ref{hyp:finite} holds, then we have:
  \begin{equation}
    \label{eq:instability-finite}
    \mbox{$W$ is unstable $\iff$ $W$ is discretely unstable $\iff$ $W$ is $H$-unstable}.
  \end{equation}
  If Hypotheses \ref{hyp:bfb}, \ref{hyp:existence} and \ref{hyp:beta}
  hold, then we have:
  \begin{equation}
    \label{eq:instability-beta}
    \mbox{$W$ is
      unstable $\iff$ $W$ is discretely unstable $\implies$ $W$ is
      $H$-unstable}.
  \end{equation}
\end{prop}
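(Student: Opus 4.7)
The plan is to reduce the proposition to a small cycle of implications: (a) instability $\Rightarrow$ discrete instability, (b) discrete instability $\Rightarrow$ instability, (c) discrete instability $\Rightarrow$ $H$-instability (valid in both cases), and (d) $H$-instability $\Rightarrow$ instability (only under Hypothesis \ref{hyp:finite}). Together (a)--(c) give \eqref{eq:instability-beta}, and adding (d) completes \eqref{eq:instability-finite}.

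For (a) I would start from some $\rho \in \P(\R^d)$ with $E(\rho) < W_\infty/2$ and apply Lemma \ref{lem:discrete-approximation}\eqref{it:limsup-lemma} to produce a recovery sequence $(\bXNs)$ with $\E(\bXNs) \to E(\rho)$; setting $s := (W_\infty/2 - E(\rho))/2$ then yields $\E(\bXNs) < W_\infty/2 - s$ for all $N$ sufficiently large, which is discrete instability. Under Hypothesis \ref{hyp:finite} the lemma applies to any $\rho$. Under Hypothesis \ref{hyp:beta}, however, the lemma requires $\rho \in \M_p(\R^d) \cap \P(\R^d)$ with $p = d/(d-\beta)$. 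To handle this I would invoke the existence result of \cite{CCP,SST} to extract a continuum minimiser $\rho^*$ (whose energy is at most $E(\rho) < W_\infty/2$) and then use Theorem \ref{thm:morrey} to conclude that $\rho^* \in \M_p(\R^d)$; applying the recovery construction to $\rho^*$ instead of $\rho$ finishes the step.

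For (b) I would take a sequence of discrete minimisers $(\bXN)$, which exist by Theorem \ref{thm:min-Rnd}. Discrete instability forces $\E(\bXN) < W_\infty/2 - s$ for $N > \bar N$, since the minimum energy cannot exceed the energy of the configuration provided by discrete instability. Theorem \ref{thm:discrete-diameter} then gives a uniform diameter bound, and Lemma \ref{lem:convergence-minimiser} ensures that a translated subsequence converges narrowly to some $\rho \in \P(\R^d)$. The $\liminf$ inequality of Lemma \ref{lem:discrete-approximation}\eqref{it:liminf-lemma} then yields $E(\rho) \leq \liminf_N \E(\bXN) \leq W_\infty/2 - s < W_\infty/2$, proving instability. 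Implication (c) is immediate: given any $B \in \R$, I would pick any $N > \max\{\bar N, B/(2s)\}$ together with the associated $\bX$ from the definition of discrete instability, which then satisfies $E_N(\bX) < W_\infty/2 - s \leq W_\infty/2 - B/(2N)$. For (d), under Hypothesis \ref{hyp:finite} I would apply the definition of $H$-instability with $B := W(0) \in \R$ to obtain $N$ and $\bX$ with $E_N(\bX) < W_\infty/2 - W(0)/(2N)$; setting $\rho := \mu_{\bX}$ and splitting off the diagonal contribution gives $E(\rho) = \E(\bX) + W(0)/(2N) < W_\infty/2$.

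The main obstacle is step (a) under Hypothesis \ref{hyp:beta}: one cannot naively take a recovery sequence for an arbitrary unstable $\rho$ because Lemma \ref{lem:discrete-approximation}\eqref{it:limsup-lemma} needs Morrey regularity, which is not automatic. The remedy is to pass to a continuum minimiser and invoke Theorem \ref{thm:morrey}. The analogous obstruction in (d) cannot be removed under Hypothesis \ref{hyp:beta} because $W(0) = +\infty$ precludes the empirical-measure trick, which is precisely why the implication $H$-instability $\Rightarrow$ instability is left open in that case.
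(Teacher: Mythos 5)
Your proof is correct and covers all the required implications. It differs from the paper's in two ways worth noting. First, under Hypothesis \ref{hyp:finite} the paper simply cites \cite[Proposition 4.1]{SST} for the equivalence of instability and $H$-instability, whereas you reprove both directions (your (c) and (d)) from scratch; this makes your argument more self-contained, and your (c) coincides with the direct computation the paper does give for \eqref{eq:instability-beta}. Second, for implication (b) (discrete instability $\Rightarrow$ instability) under Hypothesis \ref{hyp:finite} the paper uses the elementary identity $E(\mu_{\bX}) = \E(\bX) + W(0)/(2N)$, taking $N$ large enough that $s > W(0)/(2N)$; you instead run the heavier machinery --- the uniform diameter bound of Theorem \ref{thm:discrete-diameter}, narrow compactness via Lemma \ref{lem:convergence-minimiser}, and the liminf inequality of Lemma \ref{lem:discrete-approximation}\eqref{it:liminf-lemma} --- which is exactly what the paper does only for Hypothesis \ref{hyp:beta}, where $W(0)=+\infty$ rules out the simple identity. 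Both routes are valid; yours is uniform across the two hypotheses at the cost of invoking a longer chain of results where a one-line computation would suffice, while the paper's is shorter in the bounded case. Your identification of the obstacle in step (a) under Hypothesis \ref{hyp:beta}, and the remedy of passing to a continuum minimiser via \cite{CCP} and Theorem \ref{thm:morrey}, matches the paper exactly, as does your explanation of why (d) cannot be reproduced when $W(0)=+\infty$.
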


\begin{proof}
  Let $W$ satisfy Hypothesis \ref{hyp:finite}. In this case the fact
  that instability is equivalent to $H$-instability was already proved
  in \cite[Proposition 4.1]{SST}. We therefore only have to prove that
  instability is equivalent to discrete instability. Suppose first that
  $W$ is unstable and let $\rho\in\P(\R^d)$ be such that $E(\rho)<W_\infty/2$. Then, by Lemma
  \ref{lem:discrete-approximation}\eqref{it:limsup-lemma},
  \begin{equation*}
    \lim_{N\to\infty} \E(\bXNs) = E(\rho) < \textstyle \frac12 W_\infty,
  \end{equation*}
  where $\bXNs$ is a recovery sequence for $\rho$. Therefore there
  exists $s>0$ such that $\E(\bXNs) < W_\infty/2 - s$ for all $N$
  large enough, which proves that $W$ is discretely unstable. Suppose
  now that $W$ is discretely unstable. Then there exist $s>0$ and
  $\bar N\geq2$ such that, for each $N>\bar N$, we can choose
  $\bX \in \R^{Nd}$ with $\E(\bX) < W_\infty/2 - s$ and
  $s>W(0)/(2N)$. Hence
  \begin{equation*}
    E(\mu_{\bX}) = \E(\bX) + \textstyle \frac{W(0)}{2N} < \frac{1}{2}W_\infty - s + \textstyle \frac{W(0)}{2N} < \textstyle \frac{1}{2}W_\infty,
  \end{equation*}
  which ends the proof of \eqref{eq:instability-finite}.

  Let now $W$ satisfy Hypotheses \ref{hyp:bfb}, \ref{hyp:existence}
  and \ref{hyp:beta}. Suppose that $W$ is unstable. Then we know by
  \cite[Theorem 1.4]{CCP} and Theorem \ref{thm:morrey} that there
  exists a minimiser $\rho\in\M_p(\R^d)\cap\P(\R^d)$ of $E$ with
  $p = d/(d-\beta)$. As above, Lemma
  \ref{lem:discrete-approximation}\eqref{it:limsup-lemma} gives us
  that $W$ is discretely unstable. Let $W$ be discretely unstable, so
  that there exist $s>0$ and $\bar N\geq2$ such that, for each
  $N>\bar N$ we can choose $\bX \in \R^{Nd}$ with
  $\E(\bX) < W_\infty/2 - s$. Then, by Theorem \ref{thm:min-Rnd},
  there exists a minimiser $\bXN$ of $\E$ for every $N>\bar N$ such
  that $\E(\bXN) < W_\infty/2 - s$. By Lemma
  \ref{lem:convergence-minimiser} the sequence $(\bXN)_{N\geq2}$
  converges, up to a subsequence and to translations, to some
  $\rho\in\P(\R^d)$, and Lemma
  \ref{lem:discrete-approximation}\eqref{it:liminf-lemma} gives us
  \begin{equation*}
    E(\rho) \leq \liminf_{N\to\infty} \E(\bXN) \leq \textstyle \frac12 W_\infty - s < \textstyle \frac12 W_\infty,
  \end{equation*}
  which shows that $W$ is unstable. Also, for every $B\in\R$ there
  exists $N\geq2$ large enough such that $B/(2N) < s$ which proves
  that, for such $N$,
  \begin{equation*}
    \E(\bX) < \textstyle\frac12 W_\infty - s < \textstyle\frac12 W_\infty - \textstyle\frac{B}{2N},
  \end{equation*}
  where $\bX$ is as above. This ends the proof of
  \eqref{eq:instability-beta}.
\end{proof}

We end this section with the following lemma, which finally shows
Theorem \ref{thm:main}\eqref{it:main-bounded}:

\begin{lem}
  \label{lem:convergence-minimiser-unstable}
  Let $W$ satisfy Hypotheses \ref{hyp:bfb}--\ref{hyp:W-close-to-0} and
  let it be unstable. Then any sequence $(\bXN)_{N\geq2}$ of discrete
  minimisers converges, up to a subsequence and to translations, to
  some $\rho\in\P(\R^d)$. Furthermore, $\rho$ is a continuum
  minimiser.
\end{lem}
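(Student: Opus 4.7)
The plan is to combine three previously established ingredients: the uniform diameter bound for discrete minimisers, the $\Gamma$-convergence-type statement of Lemma \ref{lem:discrete-approximation}, and the existence/regularity of continuum minimisers under instability.

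First I would use Proposition \ref{prop:unstable} to promote instability to discrete instability under our hypotheses. This lets me apply Lemma \ref{lem:convergence-minimiser} directly, yielding, up to a subsequence (still denoted $(\bXN)_{N\geq2}$) and up to suitable translations, a narrow limit $\rho \in \P(\R^d)$. Tightness is automatic because of the uniform diameter bound of Theorem \ref{thm:discrete-diameter}, so $\rho$ is an honest probability measure rather than merely a subprobability.

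It remains to show that $\rho$ minimises the continuum energy $E$. Applying Lemma \ref{lem:discrete-approximation}\eqref{it:liminf-lemma} to the converging subsequence gives the lower bound
\begin{equation*}
E(\rho) \leq \liminf_{N\to\infty} \E(\bXN).
\end{equation*}
For the upper bound, since $W$ is unstable, the existence results of \cite{CCP} provide a continuum minimiser $\sigma \in \P(\R^d)$. Under Hypothesis \ref{hyp:finite} this is enough; under Hypothesis \ref{hyp:beta} I would further invoke Theorem \ref{thm:morrey} to conclude that $\sigma \in \M_p(\R^d)\cap\P(\R^d)$ with $p = d/(d-\beta)$, so that $\sigma$ falls within the scope of the recovery-sequence construction. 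Either way, Lemma \ref{lem:discrete-approximation}\eqref{it:limsup-lemma} produces a recovery sequence $(\bYNs)_{N\geq2}$ for $\sigma$ with $\lim_{N\to\infty} \E(\bYNs) = E(\sigma)$. Since $\bXN$ is a minimiser of $\E$ on $\R^{Nd}$, I have $\E(\bXN) \leq \E(\bYNs)$ for every $N\geq 2$, and chaining the inequalities gives
\begin{equation*}
E(\rho) \leq \liminf_{N\to\infty} \E(\bXN) \leq \lim_{N\to\infty} \E(\bYNs) = E(\sigma) = \min_{\P(\R^d)} E.
\end{equation*}
Since $\rho \in \P(\R^d)$ forces $E(\rho) \geq \min_{\P(\R^d)} E$, equality holds and $\rho$ is a continuum minimiser.

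The only delicate point in this argument is the singular regime (Hypothesis \ref{hyp:beta}): the recovery sequence from Lemma \ref{lem:discrete-approximation}\eqref{it:limsup-lemma} requires $\sigma \in \M_p(\R^d)\cap\P(\R^d)$, not an arbitrary probability measure, so one cannot compare $E(\rho)$ to an arbitrary competitor directly. This is precisely where Theorem \ref{thm:morrey} becomes essential, guaranteeing that the continuum minimiser $\sigma$ supplied by \cite{CCP} automatically sits in the correct Morrey space, which in turn lets the limsup inequality be tested against a true minimiser rather than against every $\sigma \in \P(\R^d)$ individually.
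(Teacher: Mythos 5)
Your proof is correct and follows essentially the same route as the paper: promote instability to discrete instability via Proposition~\ref{prop:unstable}, extract a narrow subsequential limit $\rho$ via Lemma~\ref{lem:convergence-minimiser}, then sandwich $E(\rho)$ between the liminf inequality and a recovery sequence from Lemma~\ref{lem:discrete-approximation}. The only cosmetic difference is that you test the chain of inequalities against a specific continuum minimiser (obtained from \cite{CCP} and, in the singular case, shown Morrey-regular by Theorem~\ref{thm:morrey}), whereas the paper tests against an arbitrary competitor $\nu$ in the appropriate class and only invokes existence of minimisers to close the argument under Hypothesis~\ref{hyp:beta}; both organisations use the same ingredients. (A minor caution: the symbol $\sigma$ is reserved in the paper for a metric metrising the narrow topology, so reusing it for a measure invites confusion.)
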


\begin{proof}
  Let $(\bXN)_{N\geq2}$ be a sequence such that $\bXN$ is a minimiser
  of $\E$ for all $N\geq2$. By Proposition \ref{prop:unstable} and
  Lemma \ref{lem:convergence-minimiser} we know that $(\bXN)_{N\geq2}$
  converges, up to a subsequence and to translations, to some
  $\rho\in\P(\R^d)$.

  Let us prove that $\rho$ is a minimiser of $E$. Take
  $\nu\in\P(\R^d)$ if $W$ satisfies Hypothesis \ref{hyp:finite}, and
  $\nu\in \M_p(\R^d)\cap \P(\R^d)$ with $p = d/(d-\beta)$ if $W$ satisfies
  Hypothesis \ref{hyp:beta}. We know by Lemma \ref{lem:discrete-approximation}\eqref{it:limsup-lemma} that there is a recovery sequence
  $(\bXNs)_{N\geq2}$ for $\nu$. Lemma \ref{lem:discrete-approximation} and the minimality of the
  sequence $(\bXN)_{N\geq2}$ lead to
\begin{equation*}
	E(\nu) = \lim_{N\to\infty} \E(\bXNs) \geq \lim_{N\to\infty} \E(\bXN) \geq \liminf_{N\to\infty} \E(\bXN) \geq E(\rho),
\end{equation*}
which ends the proof, since, by \cite[Theorem 1.4]{CCP} and Theorem \ref{thm:morrey}, if $W$ satisfies Hypothesis \ref{hyp:beta} then minimisers of $E$ exist and belong to $\M_p(\R^d)\cap\P(\R^d)$ with $p = d/(d-\beta)$.
\end{proof}

\subsection{Unbounded growth of the diameter} 

We show that if the potential $W$ is strictly stable, then the
diameter of any sequence of discrete minimisers must diverge; that is,
we prove Theorem \ref{thm:main}\eqref{it:main-unbounded}.

We first prove that Morrey regularity is preserved under the narrow
limit. This actually further motivates the notion of
discrete Morrey measures as in Definition
\ref{defn:discrete-morrey-spaces}.
\begin{lem}
  \label{lem:morrey-limit}
  Let $(\bXN)_{N\geq2}$ be a sequence of configurations converging
  narrowly to some $\rho\in\P(\R^d)$. Let $p\in[1,\infty]$ and suppose that
  there exists $M>0$ such that $[\bXN]_{\M_p^N}\leq M$ for all
  $N\geq2$. Then $\rho\in\M_p(\R^d)$ and $\|\rho\|_{\M_p(\R^d)} \leq
  2^{d/q} M$, with $q$ the Hölder dual of $p$.
\end{lem}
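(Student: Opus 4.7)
The plan is to fix an arbitrary open ball $B_r(x) \subset \R^d$ and show $\rho(B_r(x)) \leq 2^{d/q} M r^{d/q}$, where $q$ is the Hölder dual of $p$. Since $B_r(x)$ is open, the Portmanteau characterisation of narrow convergence gives
\begin{equation*}
  \rho(B_r(x)) \leq \liminf_{N\to\infty} \mu_{\bXN}(B_r(x)),
\end{equation*}
so it suffices to control the right-hand side uniformly in $N$, up to an error that vanishes in the limit.

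The next step is to bound $\mu_{\bXN}(B_r(x))$ for fixed $N$, writing $\bXN = (x_1^N,\dots,x_N^N)$. The trick mirrors the argument used in Lemma \ref{lem:morrey}: either no particle $x_i^N$ lies in $B_r(x)$, in which case $\mu_{\bXN}(B_r(x)) = 0$ and there is nothing to prove; or there exists an index $i$ with $x_i^N \in B_r(x)$, and then the triangle inequality gives $B_r(x) \subset B_{2r}(x_i^N)$. In the latter case, using the definition of $m_{i,2r}(\bXN)$ in \eqref{eq:notm} together with the uniform bound $[\bXN]_{\M_p^N} \leq M$,
\begin{equation*}
  \mu_{\bXN}(B_r(x)) \leq \mu_{\bXN}(B_{2r}(x_i^N)) = m_{i,2r}(\bXN) + \tfrac{1}{N} \leq M(2r)^{d/q} + \tfrac{1}{N} = 2^{d/q} M r^{d/q} + \tfrac{1}{N}.
\end{equation*}
Either way, $\mu_{\bXN}(B_r(x)) \leq 2^{d/q}Mr^{d/q} + 1/N$. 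Taking $\liminf$ as $N \to \infty$ and combining with the Portmanteau inequality above yields $\rho(B_r(x)) \leq 2^{d/q} M r^{d/q}$, proving both that $\rho \in \M_p(\R^d)$ and that $\|\rho\|_{\M_p(\R^d)} \leq 2^{d/q} M$.

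There is no real obstacle in this argument; the only point that needs care is to remember the extra mass $1/N$ coming from the central particle itself, implicit in the difference between $\mu_{\bXN}(B_{2r}(x_i^N))$ and $m_{i,2r}(\bXN)$, which is precisely the term that vanishes in the narrow limit. The endpoint cases $p=1$ (where the conclusion reduces to $\rho$ being a finite measure, automatic since $\rho\in\P(\R^d)$) and $p=\infty$ (where one obtains an $L^\infty$ density bound) follow from the same argument with no modification.
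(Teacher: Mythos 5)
Your proof is correct and follows essentially the same approach as the paper's: split on whether any particle lies in $B_r(x)$, use the inclusion $B_r(x)\subset B_{2r}(x_i)$ together with the discrete Morrey bound, and pass to the limit via the Portmanteau inequality for open sets. The only difference is cosmetic: you state the Portmanteau step first and spell out the endpoint cases $p\in\{1,\infty\}$, whereas the paper leaves both implicit.
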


\begin{proof}
  Take any integer $N \geq 2$, $x\in\R^d$ and $r>0$, and write
  $\bXN = (x_1,\dots,x_N)$.  Assume that there is $i\in\{1,\dots,N\}$
  such that $x_i\in B_r(x)$.  Since $B_r(x) \subset B_{2r}(x_i)$,
  using that $\bXN\in \M_p^N$ with $[\bXN]_{\M_p^N} \leq M$ we have
  \begin{equation*}
    \mu_{\bXN}(B_r(x))
    \leq
    \mu_{\bXN}(B_{2r}(x_i))
    \leq \textstyle\frac1N + M(2r)^{d/q}.
  \end{equation*}
  On the other hand, if there is no $i\in\{1,\dots,N\}$
  such that $x_i\in B_r(x)$ then the previous inequality holds
  trivially. By the Portmanteau
  theorem (see for example \cite[Theorem 2.1]{Billingsley}), taking limits as $N \to \infty$ gives the result:
  \begin{equation*}
    \rho(B_r(x)) \leq \liminf_{N\to\infty} \mu_{\bXN}(B_r(x))
    \leq 2^{d/q}Mr^{d/q}.
    \qedhere
  \end{equation*}
\end{proof}

We conclude by the following lemma:
\begin{lem}
  Let $W$ satisfy Hypotheses \ref{hyp:bfb}--\ref{hyp:W-close-to-0} and let it
  be strictly stable. Then any sequence $(\bXN)_{N\geq2}$ of discrete
  minimisers is such that $\diam \bXN \to \infty$ as $N\to\infty$.
\end{lem}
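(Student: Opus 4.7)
My plan is to argue by contradiction. Suppose the conclusion fails, so there exist $M>0$ and a subsequence of discrete minimisers, still denoted $(\bXN)_{N\geq 2}$, with $\diam \bXN \leq M$ for every $N$. Using the translation invariance of $\E$, I would translate each $\bXN$ so that $\mu_{\bXN}$ is supported in $\overline{B}_M$ for all $N$. The family $(\mu_{\bXN})_{N\geq 2}$ is then uniformly tight, and by Prokhorov's theorem I extract a further subsequence with $\mu_{\bXN}\rightharpoonup\rho$ narrowly for some $\rho\in\P(\R^d)$ supported in $\overline{B}_M$. Lemma \ref{lem:discrete-approximation}\eqref{it:liminf-lemma} then yields the lower bound
\begin{equation*}
E(\rho) \leq \liminf_{N\to\infty} \E(\bXN).
\end{equation*}

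Next I would produce a well-separated competitor giving a matching upper bound. First observe that $W_\infty$ must be finite under strict stability: otherwise the condition $E(\nu)>W_\infty/2=+\infty$ would have to hold for every $\nu\in\P(\R^d)$, contradicting the existence of a measure with finite energy guaranteed by the local integrability of $W$. Fix $\e>0$ and choose $L>0$ so that $|W(z)-W_\infty|\leq\e$ for all $|z|\geq L$. For each $N$, select a configuration $\bY_N \in \R^{Nd}$ whose particles are pairwise at distance at least $L$ (for instance, placed on a very dilated lattice). This gives
\begin{equation*}
\E(\bY_N) = \frac{1}{2N^2}\sum_{i\neq j} W(y_i-y_j) \leq \frac{N-1}{2N}(W_\infty+\e),
\end{equation*}
and by minimality $\E(\bXN) \leq \E(\bY_N)$. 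Letting first $N\to\infty$ and then $\e\to 0$ yields $\limsup_{N\to\infty}\E(\bXN)\leq W_\infty/2$.

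Combining the two estimates produces $E(\rho)\leq W_\infty/2$, directly contradicting the strict stability of $W$. The key nontrivial ingredient is the narrow $\Gamma$-liminf supplied by Lemma \ref{lem:discrete-approximation}\eqref{it:liminf-lemma}; the remaining steps -- translation invariance, tightness via Prokhorov, and the dilated-competitor estimate -- are standard. I do not anticipate a serious obstacle, since Lemma \ref{lem:discrete-approximation}\eqref{it:liminf-lemma} applies to any narrowly convergent sequence without extra regularity on the limit, so the strict-stability hypothesis is converted into a contradiction as soon as the two bounds are combined.
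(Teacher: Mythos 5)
Your proof is correct, and it takes a genuinely different and in fact more economical route than the paper. You share the opening moves with the paper (negate the conclusion to get a subsequence with uniformly bounded diameter, translate into a fixed ball, extract a narrow limit $\rho$, and invoke the liminf half of Lemma \ref{lem:discrete-approximation}). But from there the paper argues that $\rho$ must be a minimiser of the continuum energy --- or, under Hypothesis \ref{hyp:beta}, a minimiser on $\M_p(\R^d)\cap\P(\R^d)$ after propagating the discrete Morrey bound to the limit via Theorem \ref{thm:discrete-morrey} and Lemma \ref{lem:morrey-limit} --- and then appeals to the nonexistence results of \cite{CCP,SST} to get a contradiction. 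You instead skip the identification of $\rho$ as a minimiser entirely: you produce a trivial spread-out competitor $\bY_N$ that forces $\limsup_N \E(\bXN)\leq W_\infty/2$ (together with the observation, correct but not made explicit in the paper, that strict stability forces $W_\infty<\infty$), and then the inequality $E(\rho)\leq W_\infty/2$ directly contradicts strict stability. What this buys you is a self-contained argument that avoids the case split on Hypotheses \ref{hyp:finite}/\ref{hyp:beta}, does not need the limsup (recovery sequence) half of the $\Gamma$-convergence, does not need any of the Morrey machinery (Theorem \ref{thm:discrete-morrey}, Lemma \ref{lem:morrey-limit}), and does not rely on the external continuum nonexistence theorems. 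What the paper's route buys is a statement that sits naturally inside the $\Gamma$-convergence picture being developed (discrete minimisers would converge to a continuum minimiser, and none exists), at the cost of invoking essentially all of the regularity apparatus built in Sections 4--5. For this particular lemma your direct competitor estimate is cleaner.
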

      
\begin{proof}
  Let $(\bXN)_{N\geq2}$ be a sequence of discrete minimisers. If
  $\diam \bXN$ does not diverge one can find, after suitable translations of $(\bXN)_{N\geq2}$ (by translation invariance of $\E$), a sequence of minimisers which are uniformly compactly supported. By compactness we can extract a
  subsequence converging in the narrow topology to some
  $\rho \in \P(\R^d)$.
	
  Let $W$ first satisfy Hypothesis \ref{hyp:finite}. Then, by the same
  argument as in the proof of Lemma
  \ref{lem:convergence-minimiser-unstable}, $\rho$ must be a continuum
  minimiser. But we know that continuum minimisers do not exist if $W$
  is strictly stable due to \cite[Theorem 3.3]{CCP} and \cite[Theorem 3.2]{SST}, so we have
  reached a contradiction. We deduce that $\diam \bXN$ diverges.
	
  If now $W$ satisfies Hypothesis \ref{hyp:beta}, then $\bXN\in\M_p^N$ with $p=d/(d-\beta)$ for all $N\geq2$, by Theorem \ref{thm:discrete-morrey}, and so $\rho\in\M_p(\R^d)$ by Lemma \ref{lem:morrey-limit}. The same
  argument as in the proof of Lemma
  \ref{lem:convergence-minimiser-unstable} gives us that $\rho$ is a minimiser of $E$ on
  $\M_p(\R^d)\cap\P(\R^d)$. This is again a contradiction since the
  results in \cite{CCP, SST} actually show that there are no minimisers on
  $\M_p(\R^d)\cap\P(\R^d)$ if $W$ is strictly stable; indeed one can construct a sequence
  $(\rho_n)_{n\in\N}$ in $\M_p(\R^d)\cap\P(\R^d)$ such that
  $E(\rho_n) \to W_\infty/2$ as $n\to\infty$, which contradicts the strict
  stability of $W$ if a minimiser on $\M_p(\R^d)\cap\P(\R^d)$ exists. (The sequence $\rho_n$ can be chosen to be the uniform
  probability on the ball of radius $n$; see \cite[Theorem 3.3]{CCP}.)
\end{proof}

\section{$\Gamma$-convergence of the discrete energy}
\label{sec:gamma}

In this section we derive a constructive way of approximating an
element of $\P(\R^d)$ by a sequence of empirical measures. We show
that this way of constructing an approximating sequence actually gives
rise to a recovery sequence with respect to our discrete and continuum
energies \eqref{eq:discrete-energy} and \eqref{eq:energy},
respectively. That is: given a measure $\rho \in \P(\R^d)$ we can
approximate it narrowly by $N$-particle empirical measures in such a
way that their discrete interaction energy also approximates the
continuum interaction energy of $\rho$. Thus we prove Lemma
\ref{lem:discrete-approximation} which was used in the previous
section. This approximation property is contained in the notion of
$\Gamma$-convergence, which we give with respect to the narrow
topology. Recall that the narrow topology on $\P(\R^d)$ is, by
definition, obtained by duality with the space of continuous bounded
functions on $\R^d$. By the Portmanteau theorem (see \cite[Theorem
2.1]{Billingsley}) it can actually be equivalently obtained by duality
with the space of Lipschitz bounded functions on $\R^d$; this is a
property which we use later on. Also, the narrow topology can be
metrised by, for example, the L\'evy--Prokhorov distance; see
\cite[Section 6]{Villani2} for the definition and other examples of
distances metrising the narrow topology. If we restrict ourselves to
elements in $\P(\R^d)$ which have finite $p$th moments for some
$p\in[1,\infty)$, then it can also be metrised by the Wasserstein
distance of order $p$ up to convergence of the $p$th moments; see \cite{AGS,Villani2}. In the following we
denote by $\sigma$ any of these metrising distances.
\begin{defn}[$\Gamma$-convergence]
  \label{defn:gamma-convergence}
  Let $A$ be a subset of $\P(\R^d)$. We say that the discrete energy
  $(\E)_{N\geq2}$ \emph{$\Gamma$-converges} (narrowly) to the
  continuum energy $E$ on $A$ if the following two inequalities are
  met for all $\rho \in A$.
  \begin{enumerate}[label=(\roman*)]
  \item \label{it:liminf} (liminf inequality) All sequences
    $(\bXN)_{N\geq2}$ with $\bXN\in\R^{Nd}$ for all $N\geq2$ such that
    $\sigma(\mu_{\bXN},\rho) \to 0$ as $N \to \infty$ satisfy
    $E(\rho) \leq \liminf_{N \to \infty} \E(\bXN)$.
  \item \label{it:limsup} (limsup inequality) There exists a sequence
    $(\bXNs)_{N\geq2}$ with $\bXNs \in \R^{Nd}$ for all $N\geq2$ such that
    $\sigma(\mu_{\bXNs},\rho) \to 0$ as $N \to \infty$ and
    $E(\rho) \geq \limsup_{N \to \infty} \E(\bXNs)$. Such a sequence is
    called a \emph{recovery sequence} for $\rho$.
  \end{enumerate}
\end{defn}
A sequence $(\bXNs)_{N\geq2}$ as in the limsup inequality is called a
recovery sequence for $\rho$ because one can check that
$\E(\bXN) \to E(\rho)$ as $N\to\infty$. The notion of
$\Gamma$-convergence arises naturally in the discrete approximation of
minimisers of energy functionals
because, along with compactness, it ensures that a sequence of
discrete minimisers converges to a minimiser of the continuum
energy. (A continuum minimiser thus exists.) Formally, if
$(\bXN)_{N\geq2}$ is a sequence of minimisers of $\E$ and there exists
$\rho\in\P(\R^d)$ such that $\sigma(\mu_{\bXN},\rho)\to0$ as
$N\to\infty$ up to a subsequence, then the $\Gamma$-convergence of
$\E$ to $E$ on a set $A$ implies: for any $\nu \in A$ there exists
$(\bYN)_{N\geq2}$ such that
\begin{equation*}
  E(\nu) \geq \limsup_{N\to\infty}
  \E(\bYN) \geq \limsup_{N\to\infty} \E(\bXN) \geq \liminf_{N\to\infty}
  \E(\bXN) \geq E(\rho),
\end{equation*}
which shows that $\rho$ is a minimiser of $E$ on $A$. This is a
fundamental theorem of $\Gamma$-convergence which we already used in
the proof of Lemma \ref{lem:convergence-minimiser-unstable}. For a
detailed introduction to $\Gamma$-convergence we refer the reader to
\cite{Braides,DalMaso}. We now show the $\Gamma$-convergence of $\E$
to $E$.

\begin{thm}\label{thm:gamma}
  Assume $W$ satisfies Hypotheses
  \ref{hyp:bfb}--\ref{hyp:W-close-to-0}.
  \begin{enumerate}
  \item If Hypothesis \ref{hyp:finite} holds, then $(\E)_{N\geq2}$
    $\Gamma$-converges to $E$ on $\P(\R^d)$.
  \item If Hypothesis \ref{hyp:beta} holds, then
    $(\E)_{N\geq2}$ $\Gamma$-converges to $E$ on $\M_p(\R^d)\cap\P(\R^d)$ with
    $p=d/(d-\beta)$.
  \end{enumerate}
\end{thm}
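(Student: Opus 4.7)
\medskip
\noindent\textbf{Proof proposal for Theorem \ref{thm:gamma}.} The two conclusions of the theorem are exactly the liminf inequality and the recovery-sequence property from Definition \ref{defn:gamma-convergence}, which together are Lemma \ref{lem:discrete-approximation}. I would therefore prove that lemma, splitting between the two parts and, in each part, between Hypothesis \ref{hyp:finite} and Hypothesis \ref{hyp:beta}.

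For the liminf inequality, I start from the fact that if $\mu_{\bXN} \to \rho$ narrowly then the product measures $\mu_{\bXN} \otimes \mu_{\bXN}$ converge narrowly to $\rho \otimes \rho$ on $\R^{2d}$. Under Hypothesis \ref{hyp:finite}, $W$ is continuous and bounded, hence $\int W\,\mathrm{d}(\mu_{\bXN}\otimes\mu_{\bXN}) \to \int W\,\mathrm{d}(\rho\otimes\rho)$; combined with the identity $E(\mu_{\bXN}) = \E(\bXN) + W(0)/(2N)$ this yields $\E(\bXN) \to E(\rho)$, which is stronger than required. Under Hypothesis \ref{hyp:beta}, $W(0) = +\infty$ and the same argument fails, so I would instead truncate by $W_M := \min(W, M)$, which is lsc and bounded by $M$. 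Lower semicontinuity of $\int W_M\,\mathrm{d}(\cdot \otimes \cdot)$ under narrow convergence of product measures gives $E_M(\rho) \leq \liminf_N E_M(\mu_{\bXN}) = \liminf_N (\E_M(\bXN) + W_M(0)/(2N)) \leq \liminf_N \E(\bXN)$, and then sending $M \to \infty$ via monotone convergence on $W_M - W_{\mathrm{min}} \nearrow W - W_{\mathrm{min}}$ gives $E(\rho) \leq \liminf_N \E(\bXN)$.

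For the recovery sequence, the case of Hypothesis \ref{hyp:finite} is routine: any standard $N$-point quantization of $\rho$ suffices. For instance, partition $\R^d$ into a regular grid of cubes $\{Q_k^N\}_k$ of side $\e_N \to 0$, place $\lfloor N \rho(Q_k^N) \rfloor$ particles at the center of each $Q_k^N$, and distribute the remaining particles arbitrarily. Then $\mu_{\bXNs} \to \rho$ narrowly by testing against continuous bounded functions, and continuity and boundedness of $W$ give $\E(\bXNs) = E(\mu_{\bXNs}) - W(0)/(2N) \to E(\rho)$.

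The main obstacle is the recovery sequence under Hypothesis \ref{hyp:beta}, where the singularity of $W$ at the origin makes a naive quantization fail. I would construct $\bXNs$ by a refined quantization, placing particles on a fine sub-grid inside each cell of a dyadic partition of $\R^d$ with $\rho$-mass $1/N$ per cell; exploiting the Morrey bound $\rho(B_r(x)) \leq \|\rho\|_{\M_p(\R^d)} r^\beta$ to keep the local particle density proportional to $\rho$ and to avoid coinciding points, the construction can be arranged so that $\mu_{\bXNs} \to \rho$ narrowly and so that $\bXNs$ inherits a uniform discrete Morrey regularity $[\bXNs]_{\M_p^N} \leq C$ with $C$ depending only on $\|\rho\|_{\M_p(\R^d)}$. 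Given such a sequence, I would verify $\E(\bXNs) \to E(\rho)$ by splitting both energies via a cutoff radius $r > 0$ into a far-field part ($|x-y| \geq r$) and a near-field part ($|x-y| < r$). The far-field energies converge as $N \to \infty$ by narrow convergence of $\mu_{\bXNs} \otimes \mu_{\bXNs}$ to $\rho \otimes \rho$ applied to a smooth bounded cutoff of $W$ supported away from the origin. The near-field energies are controlled uniformly in $N$ using $W(x) \leq C_W |x|^{2-\beta}$ from Hypothesis \ref{hyp:beta}, together with Lemma \ref{lem:bound-potential-beta-ball} (applied with exponent $\beta-2 \in (0, d/q)$) on the continuum side and Lemma \ref{lem:bound-potential-beta-ball-discrete} applied to $\bXNs$ on the discrete side; both yield bounds of order $C_r \to 0$ as $r \to 0$. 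Letting $N \to \infty$ first and then $r \to 0$ concludes. The hardest step is precisely the explicit construction ensuring simultaneous narrow approximation of $\rho$ and uniform discrete Morrey regularity, which is exactly what makes the near-field estimates on the discrete side compatible with those on the continuum side.
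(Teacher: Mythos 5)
Your overall architecture matches the paper's: you reduce $\Gamma$-convergence to the two bounds of Lemma~\ref{lem:discrete-approximation}, treat Hypothesis~\ref{hyp:finite} via continuity of $W$, and under Hypothesis~\ref{hyp:beta} you split the energy at a cutoff radius and control the near-field contribution using Lemmas~\ref{lem:bound-potential-beta-ball} and~\ref{lem:bound-potential-beta-ball-discrete} with exponent $\beta-2\in(0,d/q)$, which is exactly the paper's mechanism. Your liminf argument under Hypothesis~\ref{hyp:beta} is a genuinely different and arguably cleaner route than the paper's: you truncate from above with $W_M=\min(W,M)$, use narrow lower semicontinuity of $E_M$, and then pass $M\to\infty$ by monotone convergence on $W_M-W_{\mathrm{min}}\nearrow W-W_{\mathrm{min}}$. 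The paper instead modifies $W$ only at the origin, setting $W_\e(0)=1/\e$, obtains $\liminf_N\E(\bXN)\geq E_\e(\rho)$ for every $\e$, and then must argue separately that $\rho$ has no atoms to conclude $E_\e(\rho)=E(\rho)$. Your version sidesteps the no-atom argument entirely, at the small cost of having to check that $\min$ of lsc and constant is lsc (it is) and that $W_M$ is still bounded below so Lemma~\ref{lem:lsc-E} applies (it is). This is a legitimate simplification.

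The genuine gap is in the recovery sequence under Hypothesis~\ref{hyp:beta}, which you yourself flag as ``the hardest step'' and then do not carry out. You sketch a mass-equidistributed dyadic partition with ``$\rho$-mass $1/N$ per cell'' and a fine sub-grid inside each cell, but it is not shown that such a partition exists, that it is well-shaped enough to inherit a uniform discrete Morrey bound, or how many particles go in each cell if each cell already carries mass exactly $1/N$. The paper's construction is quite different: it uses \emph{equal-side} cubes of a geometric $n^d$-subdivision of $[-L,L)^d$ with $n=\lfloor N^{1/(4d)}\rfloor$, places $\lfloor n^{4d}\rho(Q_i)\rfloor$ particles on a regular sub-grid of $Q_i$, and crucially shunts the leftover $\Ne$ particles to a remote auxiliary cube so their contribution to the energy (the terms $S_2$, $S_3$ in the paper's estimate) can be shown negligible via $\Ne/N\to0$. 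Your sketch has no analogue of the leftover-particle bookkeeping, and the verification that $[\bXNs]_{\M_p^N}$ is uniformly bounded (Step~2 of Lemma~\ref{lem:approx-measure}) is not reproducible from your description. Finally, your argument is entirely for compactly supported $\rho$; the extension to general $\rho\in\M_p(\R^d)\cap\P(\R^d)$ by the density/diagonal argument of Section~\ref{sec:non-compact-support} is omitted and must be supplied to obtain the stated theorem.
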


In the rest of this section we prove Theorem \ref{thm:gamma}; in fact,
we prove the slightly stronger statement given in Lemma
\ref{lem:discrete-approximation}. (The liminf inequality holds indeed on all of $\P(\R^d)$ even when Hypothesis \ref{hyp:beta} holds; see Remark \ref{rem:liminf}.) We first show the liminf inequality
and then the limsup inequality of Definition
\ref{defn:gamma-convergence}.

\medskip
We use the following lemma whose proof can be found in \cite[Lemma
2.1]{CCP}:
\begin{lem}
  \label{lem:lsc-E}
  If $W\:\R^d\to\R\cup\{-\infty,+\infty\}$ is bounded from below
  (resp. above) and lower (resp. upper) semicontinuous, then $E$ as
  defined in \eqref{eq:energy} is narrowly lower (resp. upper)
  semicontinuous.
\end{lem}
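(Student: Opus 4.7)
The plan is to treat the lower semicontinuous case first and reduce the upper semicontinuous case to it by passing to $-W$. Assume $\rho_n \rightharpoonup \rho$ narrowly in $\P(\R^d)$; I need to show $E(\rho) \leq \liminf_{n\to\infty} E(\rho_n)$. By adding the constant $-W_{\min}$ to $W$ (which simply shifts $E$ by $W_{\min}/2$ and preserves lower semicontinuity properties) I can assume without loss of generality that $W \geq 0$.

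The first step is to approximate $W$ from below by continuous bounded functions. Since $W\:\R^d \to [0,+\infty]$ is lsc, the Moreau--Yosida-type functions
\begin{equation*}
W_k(x) := \min\!\left\{k,\ \inf_{y\in\R^d}\bigl(W(y) + k|x-y|\bigr)\right\}
\end{equation*}
are $k$-Lipschitz, bounded by $k$, nonnegative, and satisfy $W_k(x)\uparrow W(x)$ pointwise as $k\to\infty$. The second step is the product convergence $\rho_n\otimes\rho_n \rightharpoonup \rho\otimes\rho$ narrowly on $\R^d\times\R^d$: this is standard and follows from testing against product functions $\varphi(x)\psi(y)$ (for which the convergence is immediate from narrow convergence of $\rho_n$), combined with the tightness of $(\rho_n\otimes\rho_n)_n$, which itself follows from the tightness of $(\rho_n)_n$ granted by Prokhorov's theorem. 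Since $(x,y)\mapsto W_k(x-y)$ is continuous and bounded on $\R^d\times\R^d$, this yields
\begin{equation*}
\iird W_k(x-y)\,\d\rho(x)\d\rho(y) = \lim_{n\to\infty} \iird W_k(x-y)\,\d\rho_n(x)\d\rho_n(y).
\end{equation*}

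Using $W_k \leq W$ pointwise and taking $\liminf$ in $n$ gives
\begin{equation*}
\iird W_k(x-y)\,\d\rho(x)\d\rho(y) \leq \liminf_{n\to\infty} \iird W(x-y)\,\d\rho_n(x)\d\rho_n(y) = 2\liminf_{n\to\infty} E(\rho_n).
\end{equation*}
Taking the supremum over $k$ on the left-hand side and invoking the monotone convergence theorem (which applies since $W_k \uparrow W$ and $W_k \geq 0$) yields $2E(\rho) \leq 2\liminf_{n\to\infty} E(\rho_n)$, which is the desired lower semicontinuity of $E$.

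For the upper semicontinuous case, if $W$ is usc and bounded from above by $W_{\max}\in\R$, then $\widetilde W := W_{\max} - W$ is lsc and bounded from below by $0$, so the argument above shows that $\rho \mapsto \tfrac12\iint \widetilde W(x-y)\,\d\rho\d\rho$ is narrowly lsc. Since this equals $W_{\max}/2 - E(\rho)$, narrow upper semicontinuity of $E$ follows. The main potential obstacle here is the verification that $\rho_n\otimes\rho_n \rightharpoonup \rho\otimes\rho$, but since $\R^d$ is Polish and narrow convergence is equivalent to convergence in the L\'evy--Prokhorov metric, tightness is automatic and the product convergence is routine.
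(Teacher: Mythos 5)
Your proof is correct. The paper does not contain its own proof of this lemma---it simply refers to \cite[Lemma 2.1]{CCP}---so there is no in-text argument to compare against; your self-contained argument via Moreau--Yosida regularisation of $W$ from below by bounded Lipschitz functions, tensor-product narrow convergence $\rho_n\otimes\rho_n\rightharpoonup\rho\otimes\rho$ (obtained from tightness plus testing against product functions), and a final monotone-convergence pass in $k$ is the standard route and presumably matches the cited reference. One point worth spelling out, which your capped definition of $W_k$ handles correctly, is the pointwise increase $W_k\uparrow W$ at points where $W(x)=+\infty$: there the inner infimum $\inf_y\bigl(W(y)+k|x-y|\bigr)$ does tend to $+\infty$ as $k\to\infty$, because a bounded minimising sequence $y_k$ would satisfy $|x-y_k|\to 0$ and lower semicontinuity would then force $W(x)<\infty$; without this observation, convergence of $W_k(x)$ to $W(x)$ at infinite points is not immediate.
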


\subsection{Liminf inequality}\label{subsec:liminf}

Let $\rho\in\P(\R^d)$ and $(\bXN)_{N\geq2}$ be such that
$\sigma(\mu_{\bXN},\rho) \to 0$ as $N \to \infty$. Suppose first that
$W$ satisfies Hypothesis \ref{hyp:finite}. Then $W(0)$ is finite and
\begin{equation*}
  \liminf_{N\to\infty} \E(\bXN) = \liminf_{N\to\infty} \left(E(\mu_{\bXN}) - \tfrac{W(0)}{2N}\right) \geq E(\rho),
\end{equation*}
by narrow lower semicontinuity of $E$, by Lemma \ref{lem:lsc-E}, which
is the result.

Now, suppose that $W$ satisfies Hypothesis \ref{hyp:beta}; then
$W(0)=+\infty$. Assume that $\liminf_{N\to\infty} \E(\bXN)<+\infty$ or
we are done. Let $\{W_\e\}_{\e>0}$ be a family of potentials such that
$W_\e(x) = W(x)$ for all $x\in \R^d\setminus\{0\}$ and
$W_\e(0) = 1/\e$ for all $\e>0$. So defined, $W_\e$ is lower
semicontinuous; $E_\e$, defined by
$E_\e(\nu) := 2^{-1}\ird\ird W_\e(x-y) \d\nu(x)\d\nu(y)$ for all
$\nu\in\P(\R^d)$, is therefore narrow lower semicontinuous by Lemma
\ref{lem:lsc-E}. Define
$\E^\e(\bXN):=(2N^2)^{-1} \sum_{i=1}^N \sum_{j=1,j\neq i}^N
W_\e(x_i-x_j)$,
where $\bXN=(x_1,\dots,x_N)$. Then $\E^\e(\bXN) \leq \E(\bXN)$ for all
$\e>0$ and
\begin{equation}\label{eq:lsc}
	 \liminf_{N\to\infty} \E(\bXN) \geq \liminf_{N\to\infty} \E^\e(\bXN) = \liminf_{N\to\infty} \left(E_\e(\mu_{\bXN}) - \tfrac{W_\e(0)}{2N}\right) \geq E_\e(\rho).
\end{equation}
We now need to show that $E_\e(\rho) \to E(\rho)$ as $\e\to0$. If $\rho$ has no atomic part, then $E_\e(\rho) = E(\rho)$ and we are done. We want to show by contradiction that $\rho$ cannot have an atomic part. If $\rho$ has an atomic part $\alpha \delta_{z}$ for some $0<\alpha\leq1$ and $z\in\R^d$, then, by boundedness from below of $W$,
\begin{align*}
	2E_\e(\rho) &\geq \alpha^2 \ird\ird W_\e(x-y) \d \delta_{z}(x) \d \delta_{z}(y) + W_\mt{min}(1-\alpha^2)\\
	&= \frac{\alpha^2}{\e} + W_\mt{min}(1-\alpha^2) \xrightarrow[\e\to0]{}+\infty.
\end{align*}
This contradicts \eqref{eq:lsc} and the fact that $\liminf_{N\to\infty} \E(\bXN)<+\infty$. Therefore $\rho$ cannot have an atomic part and we get the result:
\begin{equation*}
	\liminf_{N\to\infty} \E(\bXN) \geq E(\rho).
\end{equation*}

\begin{rem}\label{rem:liminf}
  The computations above tell us that the liminf inequality is
  actually true on all of $\P(\R^d)$ even if Hypothesis \ref{hyp:beta}
  holds; $\rho$ does not need to be Morrey regular in the above
  proof. Hence the liminf inequality is true not only on
  $\M_p(\R^d)\cap\P(\R^d)$ (as stated in Theorem \ref{thm:gamma}), but
  also on $\P(\R^d)$ (as stated in Lemma
  \ref{lem:discrete-approximation}).
\end{rem}

\subsection{Limsup inequality}

We first assume that $\rho\in\P(\R^d)$ is compactly supported and
then we extend the result to noncompactly supported probability measures
by a density argument in Section \ref{sec:non-compact-support}. We need to construct a sequence of particle
configurations that approximates $\rho$ narrowly and whose discrete
energy approximates the continuum energy of $\rho$.

\subsubsection{Construction of the approximation}
\label{subsubsec:approx}

The construction presented here is inspired by \cite[Proposition
4.1]{SST}. Fix any $N\geq2$ and suppose that
$\supp\rho \subset [-L,L)^d$ for some $L\geq1$. Call
$n:=\floor{N^{1/(4d)}}\geq1$, where $\floor{\cdot}$ denotes the
integer part, and divide the interval $[-L,L)$ into $n$ equal
subintervals of length $2L/n$, which gives a subdivision of $[-L,L)^d$
into $n^d$ equal cubes of the form
\begin{equation*}
  \left[-L + \frac{2 i_1 L}{n}, -L + \frac{2 (i_1+1) L}{n} \right) \times \dots \times \left[-L + \frac{2 i_d L}{n}, -L + \frac{2 (i_d+1) L}{n} \right),
\end{equation*}
for $(i_1, \dots, i_d) \in \{0, \dots, n-1\}^d$. We enumerate these cubes as
$Q_i$ for each $i \in \{1, \dots, n^d\}$. In each cube $Q_i$ we place $N_i$ particles with
\begin{equation*}
  N_i := \floor{n^{4d} \rho_i}, \quad i \in \{1, \dots, n^d\},
\end{equation*}
where $\rho_i:=\rho(Q_i)$. These particles are placed at
$x_{i,1}, \dots, x_{i,N_i}$ (when $N_i = 0$, no particles are actually
placed), anywhere on different points of a square grid obtained by
subdividing the sides of $Q_i$ into $\lfloor N_i^{1/d}+1 \rfloor$
equal smaller intervals, and by taking the nodes whose coordinates are at the
centre points of these intervals. Notice that at least one of the
$\rho_i$ is larger than or equal to $1/ n^d$, so that at least one of
the $N_i$ is strictly larger than 0. We write
$$\Np:= \sum_{i=1}^{n^d} N_i,$$ the total number of particles placed so
far.  Let us write $\Ne:= N - \Np$, the number of particles that we
still need to place (with``e'' standing for ``error''). The numbers $\Np$ and
$\Ne$ should not be confused with the number $N_i$ of particles placed
in the cube $Q_i$. We observe that 
\be\label{eq:N/M}
 N -4dN^{1-1/(4d)} - N^{1/4} \leq n^{4d} - n^d = \sum_{i=1}^{n^d} (n^{4d}
\rho_i - 1) \leq \Np \leq \sum_{i=1}^{n^d} n^{4d} \rho_i = n^{4d} \leq
N, \ee
 which yields
 \begin{equation*}
   \Ne \leq 4dN^{1-1/(4d)} + N^{1/4}.
 \end{equation*}
 In particular, we see that the fraction of particles to place is
 negligible: it holds that
\begin{equation}
   \label{eq:Ne-negligible}
  \frac{\Ne}{N} \to 0 \quad \text{as $N \to +\infty$}.
\end{equation}
Of course, if $\Ne = 0$ there is nothing left to do. Otherwise, we
place the remaining $\Ne$ particles at $y_1,\dots,y_{\Ne}$ in an
auxiliary cube $[3L,3L + 1/\sqrt{d})^d$, in different nodes of a
uniform square grid with spacing
$1/ (\sqrt{d} \lfloor \Ne^{1/d} + 1 \rfloor)$. The location and size
of this auxiliary cube ensure that the distance between any particle
in the auxiliary cube and any particle in the main cube $[-L,L)^d$ is
greater than $2L$, and that the distance between any two particles in
the auxiliary cube is less than 1. The choice of the uniform grid
ensures that the Morrey regularity is kept at the discrete level; see
Lemma \ref{lem:approx-measure} below. We give mass $1/N$ to all the
particles thus placed, so that the total mass is $1$. We then define
the candidate recovery sequence for $\rho$ by gathering all particles
placed so far:
\begin{equation*}
  \bXNs :=
  (x_{1,1},\dots,x_{1,N_1},\dots,x_{n^d,1},\dots,x_{n^d,N_{n^d}},
  y_1,\dots,y_{\Ne}) \in \R^{Nd},
\end{equation*}
with the associated empirical measure
\begin{equation*}
  \mu_{\bXNs} := \theta_N \mup + (1-\theta_N) \mue,
\end{equation*}
where
\begin{equation*}
  \mup:=
  \frac{1}{\Np} \sum_{i=1}^{n^d} \sum_{k=1}^{N_i} \delta_{x_{i,k}},
  \quad \mue:= \frac{1}{\Ne} \sum_{j=1}^{\Ne} \delta_{y_j},
  \quad \mbox{and} \quad \theta_N := \frac{\Np}{N}.
\end{equation*}
Notice that $\theta_N\to1$ as $N\to\infty$, by \eqref{eq:N/M}. In the following we
refer to $x_{1,1},\dots,x_{n^d,N_{n^d}}$ as the \emph{main} particles,
and to $y_1,\dots,y_{\Ne}$ as the \emph{auxiliary} particles. In Figure \ref{fig:construction} we illustrate the above construction and we summarise the main quantities.

\bfig[!ht]
\centering
	\includegraphics[scale=0.79]{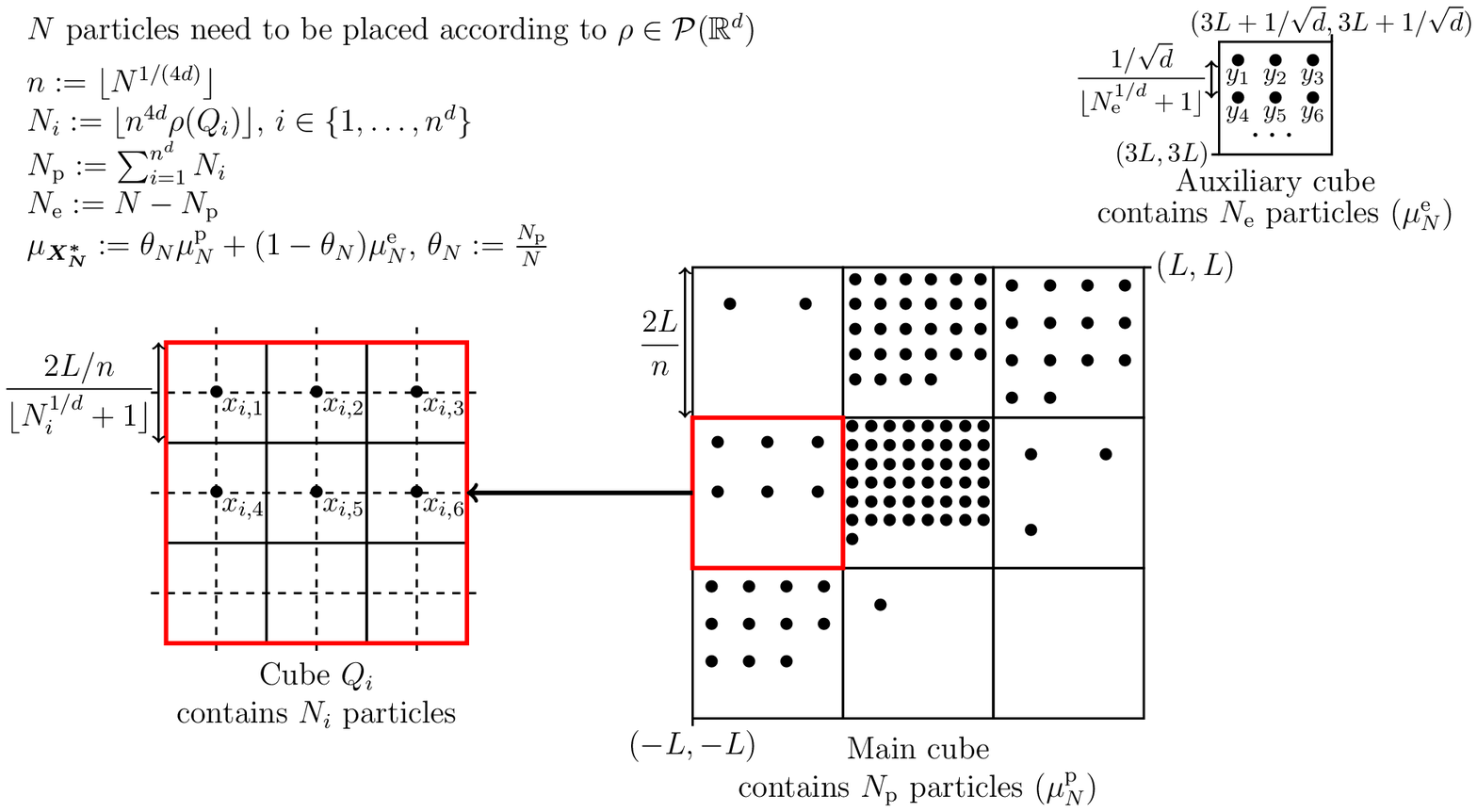}
	\caption{Schematic illustration of the construction of the empirical approximation\label{fig:construction}.} 
\efig

\subsubsection{Narrow approximation}
      
We show that $(\bXNs)_{N\geq2}$ is a good narrow approximation of
$\rho$, which is the first part of the limsup inequality in the
compactly supported case; see Definition
\ref{defn:gamma-convergence}. We also prove that if $\rho$ is
Morrey regular, so is $\bXNs$ for all $N\geq 2$.
\begin{lem}
  \label{lem:approx-measure}
  Let $\rho\in \P(\R^d)$ be compactly supported. There exists a
  sequence $(\bXNs)_{N\geq2}$ such that
  \begin{equation*}
    \sigma ( \mu_{\bXNs},\rho) \to 0 \quad \mbox{as $N\to\infty$}.
  \end{equation*}
  If furthermore $\rho \in \M_p(\R^d)$ for some $p \in [1,\infty)$, then
  $\bXNs \in \M_p^N$ for all $N\geq2$ and $[\bXNs]_{\M_p^N}$ is
  uniform in $N$.
\end{lem}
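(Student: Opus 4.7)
My plan is to prove the two assertions of the lemma separately. For the narrow convergence, since the narrow topology can be metrised by duality against bounded Lipschitz functions (Portmanteau), it suffices to test a bounded Lipschitz $\varphi$. Writing $\mu_{\bXNs}=\theta_N\mup+(1-\theta_N)\mue$ with $\theta_N\to 1$ by \eqref{eq:Ne-negligible}, the auxiliary contribution is controlled by $(1-\theta_N)\|\varphi\|_\infty$ and vanishes. For the main part, $\varphi$ oscillates at most by $L_\varphi\cdot 2L\sqrt{d}/n$ on each cube $Q_i$, so evaluating $\varphi$ at any representatives $z_i\in Q_i$ and using $N_i=n^{4d}\rho_i+O(1)$, $\Np=n^{4d}+O(n^d)$ from \eqref{eq:N/M}, a standard Riemann-type estimate gives $\int\varphi\,\d\mup=\sum_i\rho_i\varphi(z_i)+o(1)\to\int\varphi\,\d\rho$ as $N\to\infty$.

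For the Morrey regularity, given $\rho\in\M_p(\R^d)$ with constant $M=\|\rho\|_{\M_p(\R^d)}$, the plan is to bound $m_{j,r}(\bXNs)$ uniformly in $N$, $j$, $r$ by distinguishing whether $x_j$ is a main or an auxiliary particle and by comparing $r$ with the relevant grid spacing. \emph{Main particle} $x_j=x_{i,k}\in Q_i$: let $h_i\geq c(2L/n)N_i^{-1/d}$ be the grid spacing in $Q_i$. For $r\leq h_i$ only the centre lies in $B_r(x_j)$, so $m_{j,r}=0$. For $h_i\leq r\leq 2L\sqrt{d}/n$ the uniform grid count gives $\mu_{\bXNs}(B_r(x_j))\leq C(r/h_i)^d/N$; using $N_i\leq n^{4d}\rho_i$, $n^{4d}\leq N$ and the Morrey bound $\rho_i\leq M(2L\sqrt{d}/n)^{d/q}$, I would split $r^d=r^{d/q}r^{d/p}$ and absorb $r^{d/p}\leq(2L\sqrt{d}/n)^{d/p}$ (since $r\leq 2L\sqrt{d}/n$) to conclude $m_{j,r}\leq CMr^{d/q}$. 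For $r\geq 2L\sqrt{d}/n$ the ball overlaps several cubes, but the total count is bounded by $n^{4d}\rho(B_{r+2L\sqrt{d}/n}(x_j))$; dividing by $N\geq n^{4d}$ and applying the Morrey regularity of $\rho$ to the slightly enlarged ball (of radius $\leq 2r$) yields $m_{j,r}\leq 2^{d/q}Mr^{d/q}$.

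\emph{Auxiliary particle} $x_j=y_j$: the auxiliary grid has spacing $h_\mathrm{aux}\geq c_d\Ne^{-1/d}$ inside a cube of diameter $1$ separated from the main cluster by more than $2L$. For $r\leq h_\mathrm{aux}$ the bound is trivial; for $h_\mathrm{aux}\leq r\leq 1$ the grid count gives $m_{j,r}\leq Cr^d\Ne/N\leq Cr^{d/q}$ since $\Ne/N$ is bounded and $r^d\leq r^{d/q}$; for $r\geq 1$ the trivial bound $m_{j,r}\leq 1\leq r^{d/q}$ handles everything (including the possibility that $B_r(y_j)$ reaches main particles, which requires $r>2L$).

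The main obstacle is the intermediate regime for main particles: neither the uniform grid count alone (yielding $r^d$, too small a power of $r$) nor the Morrey regularity of $\rho$ alone (yielding $(\mathrm{diam}\,Q_i)^{d/q}$, insensitive to $r$) is of the correct homogeneity. The decisive step is therefore the interpolation $r^d=r^{d/q}r^{d/p}$, which allows the factor $r^{d/p}\leq(\mathrm{diam}\,Q_i)^{d/p}$ from the grid count to be traded against the Morrey factor $\rho_i\leq M(\mathrm{diam}\,Q_i)^{d/q}$, producing the correct exponent of $r$ with a constant depending only on $d$, $p$, $L$ and $M$, hence independent of $N$ as required.
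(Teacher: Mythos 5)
Your proposal is correct and follows essentially the same route as the paper's proof: Step~1 (duality with bounded Lipschitz functions plus a Riemann-type estimate over the cubes $Q_i$) and Step~2 (case analysis in $r$ against the grid spacing and cube size, with the crucial interpolation $r^d=r^{d/q}r^{d/p}$ converting the grid-count bound $\propto r^d$ into the Morrey scaling $r^{d/q}$ via $\rho_i\leq \|\rho\|_{\M_p(\R^d)}(\diam Q_i)^{d/q}$). The only small difference is that the paper works with the \emph{global} minimum grid spacing $\eta=\min_i h_i$ rather than the per-cube $h_i$, which is slightly safer since a ball centred near $\partial Q_i$ may reach particles in an adjacent cube with finer spacing; but since $\rho_i$ is bounded uniformly in $i$ by the Morrey hypothesis, $\min_i h_i$ obeys the same lower bound and your estimates go through.
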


Let us point out that for a given probability density $\rho$ the
problem of finding the best empirical approximation of $\rho$ in some
topology for a fixed number of particles is called
\emph{quantisation}. Typically $\rho$ is in this context compactly
supported and the metric is the Wasserstein
distance. In this case the best approximation can be constructed by
covering the support of $\rho$ with appropriate balls and using the
Voronoi tessellation generated by their centres, and rates of convergence as $N\to\infty$ can be
obtained under suitable regularity of $\rho$; see
\cite{Kloeckner,GL}. The empirical approximation constructed in this paper is specific to our problem---we are not concerned with its
optimality in approaching $\rho$ but with the fact that it also has to preserve the energy
as $N\to\infty$; see Lemma \ref{lem:approx-bounded}.

\begin{proof}[Proof of Lemma \ref{lem:approx-measure}]
  Take $\bXNs$ as in Section \ref{subsubsec:approx}. We proceed in two
  steps.
  
  \textit{Step 1: approximation of $\rho$.} Let
  $\phi \in L^\infty(\R^d) \cap \Lip(\R^d)$ with
  $\|\phi\|_\infty \leq 1$ and $\|\phi\|_{\Lip} \leq 1$. As already
  noticed at the beginning of Section \ref{sec:gamma}, the narrow
  topology is obtained by duality with bounded Lipschitz
  functions. Hence to prove the result it suffices to prove that
  $| \int_{\R^d} \phi(x) \d \rho(x) - \int_{\R^d} \phi(x) \d
  \mu_{\bXNs}(x) |\to0$ as $N\to\infty$. First notice that
  \begin{equation*}
    \left| \int_{\R^d} \phi(x)
      \d \rho(x) - \int_{\R^d} \phi(x) \d \mu_{\bXNs}(x) \right| \leq
    \phi_N + (1-\theta_N) \left| \ird \phi(x) \d\mue(x)\right|
    \leq \phi_N + (1-\theta_N),
  \end{equation*}
  where
  $\phi_N:=| \ird \phi(x) \d \rho(x) - \theta_N \ird \phi(x) \d
  \mup(x)|$.
  Since $\theta_N\to1$ as $N\to\infty$ we only need to show that
  $\phi_N\to0$. Using that $n^{4d}-n^d\leq N_\mt{p} \leq n^{4d}$ and
  $N_i:=\floor{n^{4d}\rho_i}$ we get
  \begin{equation}\label{eq:aprx-0}
    \frac{\rho_i}{N_i + 1}
    \leq \frac{1}{\Np}
    \leq\frac{\rho_i}{N_i} + \frac{n^d\rho_i}{\Np N_i}
    \quad \mbox{for any $i \in\{1, \dots, n^d\}$}
  \end{equation}
  and obtain
  \bes
    \left| \frac{1}{\Np}  - \frac{\rho_i}{N_i} \right|
    \leq \max\left( \frac{1}{N_i + 1}, \frac{n^d}{\Np} \right)
    \frac{\rho_i}{N_i} \leq \frac{\max(1,n^d\rho_i)}{N_\mt{p}N_i} \leq \frac{n^d}{N_\mt{p}N_i}.
  \ees
  Using this, compute
  \begin{align*} \label{eq:aprx-2}
    \numberthis \phi_N
    &= \left| \sum_{i=1}^{n^d} \int_{Q_i} \phi(x) \d\rho(x)
      - \sum_{i=1}^{n^d} \frac{\theta_N}{\Np} \sum_{k=1}^{N_i}
      \phi(x_{i,k})
      \right|\\
    &\leq \left|
      \sum_{i=1}^{n^d} \int_{Q_i} \phi(x) \d \rho(x)
      - \sum_{i=1}^{n^d} \frac{\theta_N\rho_i}{N_i} \sum_{k=1}^{N_i}
      \phi(x_{i,k})
      \right|
      + \theta_N \sum_{i=1}^{n^d} \frac{n^d}{N_\mt{p}N_i}
      \sum_{k=1}^{N_i} |\phi(x_{i,k})|\\
    &\leq \left|
      \sum_{i=1}^{n^d} \frac{\rho_i}{N_i}
      \sum_{k=1}^{N_i} \left(\phi(z_i) -  \theta_N\phi(x_{i,k})
      \right)
      \right|
      + \frac{\theta_Nn^{2d}}{N_\mt{p}},
  \end{align*}
  for some $z = (z_1,\dots,z_{n^d}) \in Q_1\times\dots\times
  Q_{n^d}$.
  For the first term in \eqref{eq:aprx-2} we use that the cubes $Q_i$
  have a diameter equal to $\sqrt{d} (2L/n)$ to bound it by
  \begin{align*}
    &\sum_{i=1}^{n^d} \frac{\rho_i}{N_i} \sum_{k=1}^{N_i}
      \left| \phi(z_i) -  \phi(x_{i,k}) \right|
      + (1-\theta_N) \sum_{i=1}^{n^d} \frac{\rho_i}{N_i}
      \sum_{k=1}^{N_i}|\phi(x_{i,k})|\\
    &\phantom{{}={}}  \leq  \sum_{i=1}^{n^d} \frac{\rho_i}{N_i}
      \sum_{k=1}^{N_i} |z_i -  x_{i,k}|
      + (1-\theta_N) \leq \frac{2L \sqrt{d}}{n} + (1-\theta_N).
  \end{align*}
  Thus
  \begin{equation*}
    \phi_N \leq \frac{2L \sqrt{d}}{n}
    + (1-\theta_N) + \frac{\theta_N n^{2d}}{\Np}\leq \frac{2L \sqrt{d}}{n}
    + (1-\theta_N) + \frac{\theta_N n^{2d}}{n^{4d}-n^d},
  \end{equation*}
  using \eqref{eq:N/M}. Since $\theta_N\to1$ as
  $N\to\infty$ we can make the right-hand side above be arbitrarily
  small as $N\to\infty$, which shows the result.

  \textit{Step 2: Morrey regularity.} Assume now that $\rho$ is in
  $\M_p(\R^d)\cap\P(\R^d)$. Any cube $Q_i$ (with side of length $2L/n$) is
  contained in a ball of radius $\sqrt{d} (2L/n)$, so
  \begin{equation*}
    \rho_i:=\rho(Q_i)
    \leq \left( \frac{2 L\sqrt{d}}{n} \right)^{d/q} \|\rho\|_{\M_p(\R^d)},
  \end{equation*}
  where $q=p/(p-1)$. Hence the number of points $x_{i,k}$ on each cube
  $Q_i$ is bounded as
  \begin{equation*}
    N_i = \floor{ n^{4d} \rho_i } \leq
    n^{4d} \left( \frac{2L \sqrt{d}}{n} \right)^{d/q} \|\rho\|_{\M_p(\R^d)}.
  \end{equation*}
  Therefore the coordinate spacing $\eta$ between any two main
  particles $x_{i,k}$ satisfies
  \begin{align*}
    \eta &\geq \frac{2L/n}{\displaystyle \max_{j\in\{1,\dots,n^d\}} N_j^{1/d}+1} \geq \frac{2L/n}{2\displaystyle\max_{j\in\{1,\dots,n^d\}} N_j^{1/d}}\geq \frac{L}{n} \left(n^{4d} \left( \frac{2L \sqrt{d}}{n} \right)^{d/q} \|\rho\|_{\M_p(\R^d)} \right)^{-1/d}\\
         & = 2^{-1} n^{-4} d^{-1/(2q)} \left( \frac{2L}{n} \right)^{1/p} \|\rho\|_{\M_p(\R^d)}^{-1/d}.
  \end{align*}
  If $n=1$, that is, $N<16^d$, then there is only one main particle
  placed in $[-L,L)^d$ and we set $\eta=+\infty$ by convention, which
  trivially satisfies the above inequality. The number of main
  particles which are placed inside any ball of radius $r > 0$ centred
  at any $x_{i,k}$ can be estimated by the number of main particles
  inside a cube of side $2r$ centred at $x_{i,k}$, which is at most
  $1 + ( 2r/\eta )^d$. The total mass of $\mup$ inside that ball, not
  counting the particle at $x_{i,k}$, is therefore bounded by
  \begin{equation*}
    \mup(B_r(x_{i,k}) \setminus \{x_{i,k}\}) \leq \frac{1}{N} \left( \frac{2r}{\eta} \right)^d \leq \frac{n^{4d}4^dd^{d/(2q)}}{N}  \left( \frac{2L}{n} \right)^{-d/p} r^d \, \|\rho\|_{\M_p(\R^d)}.
  \end{equation*}
  In
  particular, when $r \leq 2L/n$, we have
  \begin{equation*}
    \mup(B_r(x_{i,k}) \setminus \{x_{i,k}\}) \leq (n^{4d}/N)4^dd^{d/(2q)} \, \|\rho\|_{\M_p(\R^d)}r^{d/q}.
  \end{equation*}
  By \eqref{eq:N/M} we have
  $n^{4d}/N \leq n^{4d}/(n^{4d} - n^d) \leq 1/(1-8^{-d})$ if $n\geq2$; if $n=1$ then $n^{4d}/N\leq 1/2\leq1/(1-8^{-d})$. Since
  $2L/n \leq 2L$ and the main and auxiliary cubes are $2L$ apart, for
  any auxiliary particle $y_j$ we get
  \begin{equation*}
    \mup(B_r(y_j)\setminus\{y_j\}) =0. 
  \end{equation*}
  For $r > 2L/n$ we need a different bound. For any ball $B_r(z)$ of
  radius $r > 0$ and centred at any $z\in\R^d$, call $I$ the set of
  indices of the cubes $Q_i$ which touch $B_r(z)$:
  \begin{equation*}
    I := \big\{i \in \{1, \dots, N\} \mid Q_i \cap B_r(z) \neq \emptyset \big\}.
  \end{equation*}
  Then
  \begin{equation*}
    \mu_N^\mt{p}(B_r(z)) \leq \sum_{i \in I} \mup(Q_i) = \sum_{i \in I} \frac{N_i}{N} \leq \sum_{i \in I} \frac{n^{4d}\rho_i}{N} = \frac{n^{4d}}{N} \rho\Big( \bigcup_{i \in I} Q_i \Big).
  \end{equation*}
  The cubes $Q_i$ have diameter $\sqrt{d}(2L/n)$, so
  $\bigcup_{i \in I} Q_i \subset B_{r + \sqrt{d}(2L/n)}$. Then, using
  that $\rho \in \M_p(\R^d)$ we obtain
  \begin{equation*}
    \mup(B_r(z)) \leq \frac{n^{4d}}{N} \left( r + \frac{2 L\sqrt{d}}{n} \right)^{d/q} \| \rho \|_{\M_p(\R^d)}.
  \end{equation*}
  Hence, for $r > 2 L /n$,
  \begin{equation*}
    \mup(B_r(z))
    \leq (n^{4d}/N) (1 + \sqrt{d})^{d/q}
    \| \rho \|_{\M_p(\R^d)} r^{d/q}.
  \end{equation*}
  Again, $n^{4d}/N \leq 1/(1-8^{-d})$.

  We now need to find a mass estimate for $\mue$. If
  $\Ne = 0$ there is nothing to estimate. Otherwise, recall that
  the auxiliary particles are positioned such that the distance
  between two closest neighbours is at least
  $1/ (\sqrt{d} \lfloor \Ne^{1/d} + 1 \rfloor)$. Take $r>0$ and
  $y_j\in\R^d$ any auxiliary particle. The number of auxiliary particles inside $B_r(y_j)$ is at most $1+2r\sqrt{d}(\Ne^{1/d} + 1)$. Thus
  \begin{equation*}
    \mue(B_r(y_j)\setminus\{y_j\})
    \leq
    \Ne^{-1}
    \left( 2r \sqrt{d} (\Ne^{1/d} + 1) \right)^d
    =
    \left( 2r \sqrt{d} (1 + \Ne^{-1/d}) \right)^d
    \leq
    4^d d^{d/2} r^{d}.
  \end{equation*}
  Since $\mu_N^\mt{e}$ is
  supported on a set of diameter $1$, we also have $\mue(B_r(y_j)\setminus\{y_j\}) \leq 4^d d^{d/2} r^{d/q}$. For any main particle $x_{i,k}$ we have
  \bes
  	\mue(B_r(x_{i,k})\setminus\{x_{i,k}\}) = 0
  \ees
  if $r\leq 2L$, and 
  \bes
  	\mue(B_r(x_{i,k})\setminus\{x_{i,k}\}) \leq \mue(B_r(y_j)\setminus\{y_j\}) \leq 4^d d^{d/2} \leq 4^d d^{d/2} r^{d/q}
  \ees
  for any auxiliary particle $y_j$ if $r>2L\geq 2 > \diam(\supp\mue)=1$.
  
  All in all we have shown that there exist $M_\mt{p}>0$ and $M_\mt{e}>0$ such that, for any
  $z$ in the set
  $\{x_{1,1},\dots,x_{n^d,N_{n^d}},y_1,\dots,y_{\Ne}\}$ and
  $r>0$,
  \begin{equation*}
    \mu_{\bXNs}(B_r(z)) - \tfrac1N = \mu_{\bXNs}(B_r(z)\setminus\{z\})
    \leq \theta_N M_\mt{p} r^{d/q} + (1-\theta_N) M_\mt{e} r^{d/q}.
  \end{equation*}
  Since $\theta_N\leq1$ this ends the proof.
\end{proof}

\subsubsection{Approximation of the energy}
\label{subsubsec:aprx}

We show that $(\bXNs)_{N\geq2}$ gives rise to a good approximation of
the continuum energy $E$, which is the second part of the limsup
inequality in the compactly supported case; see Definition
\ref{defn:gamma-convergence}. (Equivalently, we show Lemma \ref{lem:discrete-approximation}\eqref{it:limsup-lemma} in the compactly supported case---notice that the liminf and limsup inequalities together actually show
the convergence of the energy, as stated in Lemma
\ref{lem:discrete-approximation}\eqref{it:limsup-lemma}).

\begin{lem}
  \label{lem:approx-bounded}
  Suppose that $W$ satisfies Hypotheses
  \ref{hyp:bfb}--\ref{hyp:W-close-to-0}. Take $\rho \in \P(\R^d)$ if
  $W$ satisfies Hypothesis \ref{hyp:finite}, or $\rho \in \M_p(\R^d)\cap\P(\R^d)$
  with $p=d/(d-\beta)$ if $W$ satisfies Hypothesis
  \ref{hyp:beta}. Assume in any case that $\rho$ has compact
  support. Then
  \begin{equation*}
    \lim_{N\to\infty} \E(\bXNs) = E(\rho).
  \end{equation*}
\end{lem}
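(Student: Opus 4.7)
The plan is to combine the liminf inequality established in Section \ref{subsec:liminf}, applied to the constructed sequence $(\bXNs)_{N\geq2}$ itself, which yields $E(\rho) \leq \liminf_{N\to\infty} \E(\bXNs)$ thanks to the narrow convergence $\mu_{\bXNs} \rightharpoonup \rho$ supplied by Lemma \ref{lem:approx-measure}, with a matching upper bound $\limsup_{N\to\infty} \E(\bXNs) \leq E(\rho)$. By construction every configuration $\bXNs$ lies in the compact set $K := [-L,L]^d \cup [3L,3L+1/\sqrt{d}]^d$ independently of $N$, so the narrow convergence $\mu_{\bXNs} \rightharpoonup \rho$ upgrades to narrow convergence of the product measures $\mu_{\bXNs} \otimes \mu_{\bXNs} \rightharpoonup \rho \otimes \rho$.

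Under Hypothesis \ref{hyp:finite}, $W$ is continuous and bounded on $\R^d$. The exact identity $\E(\bXNs) = \tfrac12 \iint W(x-y)\,\d\mu_{\bXNs}(x)\,\d\mu_{\bXNs}(y) - W(0)/(2N)$ then yields the conclusion directly: the double integral converges to $2E(\rho)$ by narrow convergence of the product measures applied to the continuous bounded test function $(x,y)\mapsto W(x-y)$, while the diagonal correction $W(0)/(2N)$ vanishes.

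Under Hypothesis \ref{hyp:beta} the origin singularity must be truncated. Fix a smooth cutoff $\chi_\epsilon \: [0,\infty) \to [0,1]$ with $\chi_\epsilon(r) = 0$ for $r \leq \epsilon$ and $\chi_\epsilon(r) = 1$ for $r \geq 2\epsilon$, and set $W_\epsilon(x) := \chi_\epsilon(|x|) W(x)$ with $W_\epsilon(0) := 0$; then $W_\epsilon$ is continuous and bounded on $K - K$. Since $W_\epsilon(0) = 0$, one has the clean decomposition $\E(\bXNs) = \tfrac12 \iint W_\epsilon(x-y)\,\d\mu_{\bXNs}(x)\,\d\mu_{\bXNs}(y) + R_N(\epsilon)$, where the remainder $R_N(\epsilon) := (2N^2)^{-1} \sum_{i\neq j} (W-W_\epsilon)(x_i^*-x_j^*)$ is supported on pairs with $|x_i^* - x_j^*| \leq 2\epsilon$. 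For each fixed $\epsilon > 0$ the first (truncated) integral converges as $N\to\infty$ to $\tfrac12 \iint W_\epsilon(x-y)\,\d\rho(x)\,\d\rho(y)$ by narrow convergence of the products.

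The crux is the uniform-in-$N$ smallness of $R_N(\epsilon)$ as $\epsilon\to 0$. For $\epsilon \leq 1/2$, Hypothesis \ref{hyp:beta} gives the pointwise bound $|W(x)| \leq C_W |x|^{2-\beta} + |W_\mt{min}|$ for $|x| \leq 1$; combined with the uniform Morrey bound $\sup_N [\bXNs]_{\M_p^N} < \infty$ from Lemma \ref{lem:approx-measure}, Lemma \ref{lem:bound-potential-beta-ball-discrete} applied with the admissible exponent $\beta - 2 \in (0, d/q) = (0,\beta)$ (controlling the $|x|^{2-\beta}$ contribution), and Definition \ref{defn:discrete-morrey-spaces} (controlling the cardinality of close-pair indices), one obtains $|R_N(\epsilon)| \leq \eta(\epsilon)$ with $\eta(\epsilon) \to 0$ independently of $N$. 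The strictly parallel continuum computation, using Lemma \ref{lem:bound-potential-beta-ball} and $\rho \in \M_p(\R^d)$, gives $|E(\rho) - \tfrac12 \iint W_\epsilon\,\d\rho\,\d\rho| \leq \eta'(\epsilon) \to 0$. Sending first $N\to\infty$ for fixed $\epsilon$ and then $\epsilon \to 0$ concludes the proof. The main obstacle is precisely this uniform-in-$N$ control of near-diagonal interactions in the singular regime, which is exactly the phenomenon that motivated designing the discrete construction of Section \ref{subsubsec:approx} to preserve uniform discrete Morrey regularity in Lemma \ref{lem:approx-measure}.
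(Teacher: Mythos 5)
Your proof is correct and follows a genuinely different route from the paper's. The paper proceeds by an explicit, hands-on comparison of the discrete double sum to the continuum double integral: it splits the interactions into main--main, main--auxiliary and auxiliary--auxiliary parts ($S_1,S_2,S_3$), then further splits $S_1$ according to whether the two cubes $Q_i,Q_j$ are within distance $\eta$ of each other, bounding the far part using $\C^1$ regularity of $W$ away from the origin and the quantisation error $\rho_i\rho_j-N_iN_j/N^2$, and bounding the near part by the Morrey estimates of Lemmas \ref{lem:bound-potential-beta-ball} and \ref{lem:bound-potential-beta-ball-discrete}. You instead regularise the potential itself by a cutoff $W_\e$ vanishing near the origin, invoke narrow convergence of the product measures $\mu_{\bXNs}\otimes\mu_{\bXNs}\rightharpoonup\rho\otimes\rho$ to handle the truncated (bounded, continuous) kernel for fixed $\e$, and then control the near-diagonal remainder $R_N(\e)$ uniformly in $N$. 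The singular part is the same in substance in both proofs: the admissible exponent $\beta-2\in(0,d/q)$ (legitimate since $2<\beta<d$ and $d/q=\beta$ for $p=d/(d-\beta)$), together with the uniform bound $\sup_N[\bXNs]_{\M_p^N}<\infty$ from Lemma \ref{lem:approx-measure} and Lemma \ref{lem:bound-potential-beta-ball-discrete}, is what makes $R_N(\e)$ small uniformly in $N$. What your version buys is that it bypasses the explicit Riemann-sum bookkeeping ($S_{1,1}$, $S_{1,2}$, the $\rho_i\rho_j-N_iN_j/N^2$ estimate, the separate treatment of $S_2,S_3$) by absorbing all the ``regular'' comparison into the abstract narrow convergence of tensor products, at the cost of needing the cutoff construction and a brief argument that $W_\e$ can be taken continuous and bounded (using that all $\mu_{\bXNs}$ and $\rho$ live in a fixed compact). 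What the paper's more explicit version buys is that it is self-contained and avoids the tensor-product narrow convergence lemma. Two small points worth making explicit in a write-up: (i) the uniform Morrey bound in Lemma \ref{lem:approx-measure} covers both main and auxiliary particles, so your bound on $R_N(\e)$ indeed applies to all pairs $i\neq j$; (ii) the bound $|W(x)|\leq C_W|x|^{2-\beta}+|W_{\mathrm{min}}|$ for $|x|\leq 1$ requires $2\e\leq 1$, which you account for.
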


\begin{proof}
  Notice that $E(\rho) < +\infty$ by boundedness of $W$ or Lemma
  \ref{lem:bound-potential-beta-ball}. Take $\bXNs$ as in Section
  \ref{subsubsec:approx} and Lemma \ref{lem:approx-measure}. Assume
  first that $W$ satisfies Hypothesis \ref{hyp:finite}. By Lemma
  \ref{lem:lsc-E}, $E$ is both upper and lower semicontinuous in the
  narrow topology (and hence continuous at any $\rho$ with $E(\rho)$
  finite); also, Lemma \ref{lem:approx-measure} tells us that
  $\sigma(\mu_{\bXNs}, \rho) \to 0$ as $N \to \infty$. Hence the
  result:
  \begin{equation*}
    \lim_{N\to\infty} E_N(\bXNs)
    = \lim_{N\to\infty} \left(E(\mu_{\bXNs})
      - \textstyle\frac{W(0)}{2N}\right) = E(\rho).
  \end{equation*}
  Assume now that $W$ satisfies Hypothesis \ref{hyp:beta}. Arranging
  the terms in 1) interactions among the $N_{\mt{p}}$ particles in the
  main cube, 2) interactions between particles in the main cube and
  particles in the auxiliary cube and 3) interactions among particles
  in the auxiliary cube, we have, by
  bilinearity of $\E$,
  \begin{align*}\label{eq:S}
    \numberthis \;\;\;\; & 2 \left| E(\rho) - E_N(\bXNs) \right|
    \\
    & \leq \Bigg|
      \sum_{i,j=1}^{n^d} \int_{Q_i} \int_{Q_j} W(x-y) \d\rho(x) \d\rho(y)
      - \frac{1}{N^2} \underset{(i,k) \neq (j,\ell)}
      {\sum_{i,j=1}^{n^d} \sum_{k = 1}^{N_i} \sum_{\ell=1}^{N_j}}
      W(x_{i,k} - x_{j,\ell})
      \Bigg|
    \\
    &\phantom{{}={}}
      + \frac{2}{N^2} \left| \sum_{i=1}^{n^d} \sum_{k = 1}^{N_i}
      \sum_{j=1}^{\Ne} W(x_{i,k} - y_j)\right|
      + \frac{1}{N^2} \left| \sum_{i=1}^{\Ne}
      \sum_{\substack{j=1\\j\neq i}}^{\Ne}
    W(y_i - y_j)
    \right|
    \\
    &=: S_1 + S_2 + S_3.
  \end{align*}
  We break the $i$- and $j$-sums in $S_1$ into two sets: the set
  of $i,j$ such that $Q_i$ and $Q_j$ are far apart and its
  complement. For $\eta > 0$ we define
  \begin{equation*}
    I_\eta :=
    \left\{ (i,j) \in \{1,\dots,n^d\}^2
      \mid  \dist(Q_i, Q_j) > \eta \right\},
  \end{equation*}
  and we call $I_\eta^\mt{c}$ its complement in
  $\{1,\dots,n^d\}^2$. Pick $\eta$ small enough and $n$ large enough
  such that $|x-y|\leq1$ for all $(x,y)\in Q_i\times Q_j$ and
  $(i,j)\in I_\eta^\mt{c}$. We get
  \begin{align*}
    \label{eq:S0N-1}
    \numberthis \;\;\;\;
    & \left| \sum_{(i,j) \in I_\eta^\mt{c}}
      \int_{Q_i} \int_{Q_j} W(x-y) \d\rho(x) \d\rho(y)
      - \frac{1}{N^2} \underset{(i,k) \neq (j,\ell)}
      {\sum_{(i,j)  \in I_\eta^\mt{c}}
      \sum_{k = 1}^{N_i} \sum_{\ell=1}^{N_j}}
      W(x_{i,k} - x_{j,\ell}) \right|
    \\
    &\leq
      C_W\sum_{(i,j) \in I_\eta^\mt{c}}
      \int_{Q_i} \int_{Q_j} |x-y|^{2-\beta} \d\rho(x) \d\rho(y)
      + \frac{C_W}{N^2} \underset{(i,k) \neq (j,\ell)}
      {\sum_{(i,j)  \in I_\eta^\mt{c}} \sum_{k = 1}^{N_i}
      \sum_{\ell=1}^{N_j}} |x_{i,k} - x_{j,\ell}|^{2-\beta}
    \\
    &\leq C_WC_\eta ( \|\rho\|_{\M_p(\R^d)}
      + [\mu_N^\mt{p}]_{\M_p^N})
      \leq C_WC_\eta ( \|\rho\|_{\M_p(\R^d)} + M_\mt{p}),
  \end{align*}
  where $C_\eta$ is a quantity such that $C_\eta \to 0$ as $\eta\to0$,
  as can be deduced from Lemmas \ref{lem:bound-potential-beta-ball}
  and \ref{lem:bound-potential-beta-ball-discrete}, using that
  $\rho\in\M_p(\R^d)$ and $\mu_N^\mt{p}\in\M_p^N$ with
  $[\mu_N^\mt{p}]_{\M_p^N}\leq M_\mt{p}$ for some $M_\mt{p}>0$; see
  Step 2 in the proof of Lemma \ref{lem:approx-measure}.

  For the terms $(i,j) \in I_\eta$ we have
  \begin{multline*}
    \Big| \sum_{(i,j) \in I_\eta}
      \int_{Q_i} \int_{Q_j} W(x-y) \d\rho(x) \d\rho(y)
      - \frac{1}{N^2} \underset{(i,k) \neq (j,\ell)}
      {\sum_{(i,j)  \in I_\eta}
        \sum_{k = 1}^{N_i} \sum_{\ell=1}^{N_j}}
      W(x_{i,k} - x_{j,\ell})
    \Big|
    \\
    =
    \Big|
      \sum_{(i,j) \in I_\eta} \int_{Q_i} \int_{Q_j}
      \frac{1}{N_{i} N_j} \sum_{k = 1}^{N_i} \sum_{\ell=1}^{N_j}
      \left( W(x-y)
        - \frac{N_i N_j}{N^2 \rho_i \rho_j} W(x_{i,k} - x_{j,\ell})
      \right)
      \d\rho(x) \d\rho(y)
    \Big|
    \\
    \leq
    \sum_{(i,j)  \in I_\eta} \int_{Q_i} \int_{Q_j}
      \frac{1}{N_{i} N_j} \sum_{k = 1}^{N_i} \sum_{\ell=1}^{N_j}
      \left| W(x-y)
        - W(x_{i,k} - x_{j,\ell})
      \right|
      \d\rho(x) \d\rho(y)
    \\
    +
    \sum_{(i,j)  \in I_\eta}
    \frac{1}{N_{i} N_j} \sum_{k = 1}^{N_i} \sum_{\ell=1}^{N_j}
    \left|
       \rho_i \rho_j - \frac{N_i N_j}{N^2}
    \right|
    | W(x_{i,k} - x_{j,\ell}) |
    =:
    S_{1,1} + S_{1,2},
  \end{multline*}
  where we recall that $\rho_i := \rho(Q_i)$.  We show now that
  $S_{1,1}$ and $S_{1,2}$ become small as $N \to +\infty$. For
  $S_{1,1}$,
  the terms $x-y$ and $x_{i,k}-x_{j,\ell}$ satisfy
  $\eta\leq |x - y| \leq 2L\sqrt{d}$ and
  $\eta\leq |x_{i,k}-x_{j,\ell}| \leq 2L\sqrt{d}$. Thus, since
  $W\in\C^1(\R^d\setminus\{0\})$, there exists $W_\eta'>0$ with
  $|W(x-y)-W(x_{i,k}-x_{j,\ell})| \leq (|x-x_{j,k}| +
  |y-x_{i,\ell}|)W_\eta'$,
  and, since the diameter of any cube $Q_i$ is $\sqrt{d}(2L/n)$,
  \begin{equation}
    \label{eq:S0N-3.5}
    S_{1,1} \leq \frac{4W_\eta'L\sqrt{d}}{n}.
  \end{equation}
  The terms $x_{i,k}-x_{j,\ell}$ in $S_{1,2}$ also verify
  $\eta\leq |x_{i,k}-x_{j,\ell}| \leq 2L\sqrt{d}$, and so there exists
  $W_\eta>0$ such that $|W(x_{i,k}-x_{j,\ell})| \leq W_\eta$. Hence
  \begin{multline}
    \label{eq:S0N-4}
    S_{1,2} \leq W_\eta
    \sum_{(i,j)  \in I_\eta}
    \left|
      \rho_i \rho_j - \frac{N_i N_j}{N^2}
    \right|
     = W_\eta
    \sum_{(i,j)  \in I_\eta}
    \left(
      \rho_i \rho_j - \frac{N_i N_j}{N^2}
    \right)
    \\
    \leq
    W_\eta
    \sum_{i,j = 1}^{n^d}
    \left(
      \rho_i \rho_j - \frac{N_i N_j}{N^2}
    \right)
    = W_\eta \left( 1 - \frac{N_{\mt{p}}^2}{N^2}\right)
    =  W_\eta \left( 1 - \frac{N_{\mt{p}}}{N}\right)
    \left( 1 + \frac{N_{\mt{p}}}{N}\right)
    \leq
    2 W_\eta \frac{N_{\mt{e}}}{N}
    .
  \end{multline}
  We notice that $N_{\mt{e}} / N \to 0$ as $N \to +\infty$; see
  \eqref{eq:Ne-negligible}. Letting $n\to\infty$ (that is,
  $N\to\infty$) and then $\eta\to0$ in this order in \eqref{eq:S0N-1},
  \eqref{eq:S0N-3.5} and \eqref{eq:S0N-4} gives that $S_1 \to 0$ as
  $N\to\infty$.

  We now deal with terms $S_2$ and $S_3$ in \eqref{eq:S}. As the terms
  $x_{i,k}-y_j$ in $S_2$ satisfy
  $2L\leq |x_{i,k} - y_j|\leq 5L\sqrt{d}$ we have that
  $|W(x_{i,k}-y_j)|\leq 2 W_L$ for some $W_L>0$ and
  \begin{equation*}
    S_2 \leq 4W_L\frac{N_{\mt{e}}}{N} .
  \end{equation*}
  By Step 2 of the proof of Lemma \ref{lem:approx-measure} we know
  that $\mue \in \M_p^N$ with $[\mue]_{\M_p^N} \leq M_\mt{e}$ for some
  $M_\mt{e}>0$. Also, the terms $y_i-y_j$ in $S_3$ verify
  $|y_i-y_j|\leq 1$ and, by Hypothesis \ref{hyp:beta} and Lemma
  \ref{lem:bound-potential-beta-ball-discrete}, we get
  \begin{equation*}
    S_3 \leq C_W C_d M_\mt{e} \frac{N_{\mt{e}}^2}{N^2} ,
  \end{equation*}
  for some constant $C_d > 0$. Clearly we have $S_2 \to 0$ and
  $S_3 \to 0$ as $N\to\infty$ since ${N_{\mt{e}}}/{N} \to 0$,
  which concludes the proof.
\end{proof}

\subsubsection{Extension to noncompactly supported probability
  measures}
\label{sec:non-compact-support}

We extend Lemmas \ref{lem:approx-measure} and \ref{lem:approx-bounded}
to the case when $\rho$ is not necessarily compactly supported; this
finishes the proof of Lemma \ref{lem:discrete-approximation} and
Theorem \ref{thm:gamma}.

We proceed by density. Take $\rho\in\P(\R^d)$ if Hypothesis
\ref{hyp:finite} holds, or $\rho \in \M_p(\R^d)\cap\P(\R^d)$ with
$p = d / (d-\beta)$ if Hypothesis \ref{hyp:beta} holds. Let
$(\rho_l)_{l>0}$ be a sequence of compactly supported probability
measures such that $\sigma(\rho_l,\rho) \to0$ and
$E(\rho_l)\to E(\rho)$ as $l\to\infty$; for example take $\rho_l$ to
be the normalisation of $\rho$ restricted to the ball $B_l$. By Lemmas
\ref{lem:approx-measure} and \ref{lem:approx-bounded} we can construct a sequence of particles
$(\boldsymbol{X_{N,l}^*})_{N\geq2}$ such that
$\sigma(\mu_{\boldsymbol{X_{N,l}^*}},\rho_l)\to0$ and
$\E(\boldsymbol{X_{N,l}^*})\to E(\rho_l)$ as $N\to\infty$ for any
$l>0$. Therefore, for any subsequence
$(\boldsymbol{\bY_{k,l}})_{k\in\N}:=(\boldsymbol{X_{N_k,l}^*})_{k\in\N}$
we have $\sigma(\mu_{\boldsymbol{\bY_{k,l}}},\rho_l)\to0$ and
$F_k(\boldsymbol{\bY_{k,l}})\to E(\rho_l)$ as $k\to\infty$, where we
write $F_k$ for $E_{N_k}$. By the triangle inequality,
\begin{equation*}
  \sigma(\mu_{\boldsymbol{\bY_{k,l}}},\rho)
  \leq \sigma(\mu_{\boldsymbol{\bY_{k,l}}},\rho_l) + \sigma(\rho_l,\rho).
\end{equation*}
Therefore, for any $l>0$ there exists $k(l)\in\N$ such that
$k(l) \to\infty$ as $l\to\infty$, and
\begin{equation*}
	\sigma(\mu_{\boldsymbol{Y_{k(l),l}}},\rho) \leq \tfrac1l + \sigma(\rho_l,\rho) \to 0 \quad \mbox{and} \quad F_{k(l)}(\boldsymbol{Y_{k(l),l}}) \leq \tfrac1l + E(\rho_l) \to E(\rho)
\end{equation*}
as $l\to\infty$. This, together with the liminf inequality shown in Section \ref{subsec:liminf}, proves that the subsequence $(F_{k(l)})_{l>0}$
$\Gamma$-converges to $E$ as $l\to\infty$. For any subsequence
$(F_k)_{k\in\N} = (E_{N_k})_{k\in\N}$ we can therefore extract a
further subsequence which $\Gamma$-converges to $E$, which in turn
shows that $(\E)_{N\geq2}$ $\Gamma$-converges to $E$ by the Urysohn
property of the $\Gamma$-convergence; see \cite[Proposition
1.44]{Braides}.

\subsection*{Acknowledgements}

J.~A.~Cañizo was supported by the Spanish \emph{Ministerio de
  Eco\-no\-mía y Competitividad} and the European Regional Development
Fund (ERDF / FEDER), project MTM2014-52056-P. F. S. Patacchini thanks Imperial College London for supporting his PhD studies via a Roth studentship. The authors wish to thank
the Mittag-Leffler Institute for their support during the programme
``Interactions between Partial Differential Equations \& Functional
Inequalities'' from 1 September to 16 December 2016, when
progress on this paper was made. The authors are grateful to Jos\'e
Antonio Carrillo for his insightful comments on earlier versions of
this paper.

\bibliography{particle_interaction_energy}
\bibliographystyle{abbrv}

\end{document}